\documentclass[reqno,10pt]{amsart}

\usepackage{amsmath,amssymb,bbm}
\usepackage{enumerate}

\newtheorem{theorem}{Theorem}[section]
\newtheorem{lemma}[theorem]{Lemma}

 \theoremstyle{remark}
\newtheorem{remark}[theorem]{Remark}

\newtheorem{example}[theorem]{Example}

\usepackage{graphicx}

\numberwithin{equation}{section}

 \newcommand{\kX}{\mathcal X}
\newcommand{\kE}{\mathcal E} \newcommand{\kY}{\mathcal Y}
 
\newcommand{\kB}{\mathcal B}

\newcommand{\kC}{\mathcal C} 
\newcommand{\kD}{\mathcal D} \newcommand{\kL}{\mathcal L}
\newcommand{\kO}{\mathcal O} \newcommand{\kH}{\mathcal H}
\newcommand{\kU}{\mathcal U} 
\newcommand{\kI}{ I} 
\newcommand{\kZ}{\mathcal Z}

\newcommand{\interior}{\operatorname{int}}

\newcommand{\kCb}{\kC_{\operatorname{b}}}

\newcommand {\D}{\mathrm D} \renewcommand{\d}{\mathrm d}
\renewcommand{\i}{\mathrm i} \newcommand {\e}{\mathrm e}

\renewcommand{\a}{\mathsf a} \renewcommand{\b}{\mathsf b}
\newcommand{\sS}{\mathsf S}


\DeclareMathAlphabet{\mathpzc}{OT1}{pzc}{m}{it}

\newcommand {\1}{\mathbbm{1}} 
\newcommand {\R}{\mathbb R} 
\newcommand{\N}{\mathbb N} 
\newcommand {\C}{\mathbb C} 
 
\newcommand {\Z}{\mathbb Z} 
\renewcommand{\P}{\mathbb P}
\newcommand {\Q}{\mathbb Q}

\newcommand {\id}{\operatorname{id}} 
\newcommand{\spec}{\operatorname{spec}}

\renewcommand {\Re}{\mathfrak{Re}} 
\renewcommand {\Im}{\mathfrak{Im}}
\newcommand{\nb}{{\operatorname{nb}}}

\newcommand{\norm}[3][{\vphantom 1}]{\lVert #2 \rVert_{#3}^{#1}}
\newcommand{\ip}[3][{\vphantom 1}]{\langle #2 \rangle_{#3}^{#1}}


\begin{document}

\title[Time discretizations of PDEs with non-smooth data] {Runge-Kutta
  time semidiscretizations of semilinear PDEs with non-smooth data}

\author[C. Wulff]{Claudia Wulff} \address[C. Wulff]%
{Department of Mathematics\\
  University of Surrey \\
  Guildford GU2 7XH \\
  UK\footnote{corresponding author, email: c.wulff@surrey.ac.uk}
  }
\author[C. Evans]{Chris Evans} \address[C. Evans]%
 {Department of Mathematics\\
  University of Surrey \\
  Guildford GU2 7XH \\
  UK}

\date{\today}

\begin{abstract}
  We study semilinear evolution equations $ \frac {\d U}{\d t}=AU+B(U)$ posed
  on a Hilbert space $\kY$, where $A$ is normal and generates a
  strongly continuous semigroup, $B$ is a 
smooth nonlinearity
  from $\kY_\ell = D(A^\ell)$ to itself, and  
$\ell \in I \subseteq [0,L]$, $L \geq 
  0$, $0,L \in I$.  In particular the one-dimensional semilinear wave equation and nonlinear
  Schr\"odinger equation with periodic, Neumann and Dirichlet boundary
  conditions fit into this framework. We discretize the evolution equation
with  an
  A-stable Runge-Kutta method in time, retaining continuous space,
  and prove convergence  of order $O(h^{p\ell/(p+1)})$
 for non-smooth initial data $U^0\in \kY_\ell$,  where $\ell\leq p+1$, for a method of classical order $p$, extending
 a result by Brenner and Thom\'ee for linear systems.
 Our  approach is to project the semiflow and numerical method to spectral
  Galerkin approximations, and to balance the projection error with the  error of the time discretization of the 
   projected system. Numerical experiments suggest that our
  estimates are sharp.\\[1ex]

\noindent
{\sc Keywords:} Semilinear evolution equations, $A$-stable Runge Kutta semidiscretizations in
time, fractional order of convergence.\\
{\sc AMS subject classification:} 65J08, 65J15, 65M12, 65M15.
\end{abstract}

\maketitle

\tableofcontents


\section{Introduction}
\label{sec:intro}
 
We study the convergence of a class of A-stable Runge-Kutta time
semidiscretizations of the semilinear evolution equation
\begin{equation}
  \frac{\d U}{\d t} = AU + B(U)
  \label{eqn:see}
\end{equation}
for non-smooth initial data $U(0)=U^0$.  In the examples we have
in mind \eqref{eqn:see} is a partial differential equation (PDE). We assume that (\ref{eqn:see}) is
posed on a Hilbert space $\kY$, $A$ is a normal linear operator  
that generates a strongly continuous semigroup, and that $B$ is smooth on a scale
 of Hilbert spaces $\{\kY_\ell\}_{\ell \in I}$, $I \subseteq [0,L]$, $0,L \in I$,
 as detailed in  condition (B)
below.  Here $\kY_\ell=D(A^\ell)\subseteq \kY$,
$\ell \geq 0$.
Note that condition (B) depends on both, the smoothness properties of the nonlinearity
$B(U)$ and  the  boundary conditions.
Under these assumptions the class of equations we consider
includes the  semilinear wave equation and the nonlinear Schr\"odinger
equation in one spatial dimension with periodic, Neumann and Dirichlet boundary conditions (see Examples
\ref{ex:SWE} - \ref{ex:NSE} below). For an example in three space dimensions see Example \ref{ex:cubicNSE}. We
discretize \eqref{eqn:see} in time by an $A$-stable Runge Kutta
method; the condition of $A$-stability ensures that the numerical
method is well-defined on $\kY$, and is satisfied by a large class of methods
including the Gauss-Legendre collocation methods.

Discretizing in time while retaining a continuous spatial parameter
means that we consider the numerical method as a nonlinear operator on
the infinite dimensional space $\kY$. This leads to several
technicalities, in particular  existence results for the
numerical method $\Psi^h$ as well as the semiflow $\Phi^t$ and
regularity of solutions in both cases  are required to ensure   convergence
results analogous to the finite dimensional case. In \cite{OW10A},
existence and regularity of the semiflow of (\ref{eqn:see}) on a scale
of Hilbert spaces, corresponding results for the numerical method, and
full order convergence of the time semidiscretization for sufficiently smooth
data are studied in detail. 
  We review the relevant results in
Sections \ref{sec:hpde} and \ref{sec:rk_schemes}.

In this paper we consider the effect of non-smooth data on the order
of convergence of the time semidiscretization in this setting. We
 consider an $A$-stable Runge-Kutta method of classical order
$p$ applied to the problem \eqref{eqn:see} with initial data
$U^0\in\kY_\ell$, $\ell \in I$.
The main result we give here, Theorem
\ref{thm:ns_convergence}, shows
that we can expect  order of convergence $\kO(h^q)$ where
$q(\ell) = p\ell/(p+1)$ for $0\leq \ell < p+1$. This corresponds
closely with numerical observation, cf. Figure \ref{fig:impr_swe_2}.
 Given a time $T>0$  we prove the
 above order of convergence for the  time-semidiscretization up to time $T$  for any 
 solution  $U(t)$ of \eqref{eqn:see} with a given $\kY_\ell$ bound. Here $\ell>0$ is such
that  $\ell-k \in I$ for $k=1,\ldots , \lfloor \ell \rfloor$ (the greatest integer $\leq \ell$).  
 It is shown in \cite{OW10A} that for $\ell\geq p+1$ we have full order of
convergence $\kO(h^p)$. 
 
The reduction in order of the method from $p$ to $q$ for $\ell <p+1$ is caused by the
occurrence of unbounded operators in the Taylor expansion of the
one-step error coefficient. Our approach is to apply a spectral
Galerkin approximation to the semiflow of the evolution equation
\eqref{eqn:see}, and to discretize the projected evolution equation in time. This
allows us to bound the size of the local error coefficients in terms
of the accuracy of the projection. By balancing the projection error
with the growth of the local error coefficients we obtain the
estimates of our main result, Theorem \ref{thm:ns_convergence}.

Related results include those of Brenner and Thom\'ee \cite{BT79}, who
consider linear evolution equations $\dot U=AU$ in a more general setting, namely posed on a Banach
space $\kX$, where $A$ generates a strongly continuous semigroup $e^{tA}$ on
$\kX$. They show $O(h^q)$ convergence of A-acceptable rational
approximations of the semigroup for non-smooth initial data $U^0\in
D(A^\ell)$, $\ell = 0,\ldots,p+1$, with $q=q(\ell)= p \ell/(p+1)$ as above, if $\ell> (p+1)/2$
(when $\ell\leq (p+1)/2$ they prove convergence with order $q(\ell)< p \ell/(p+1)$).
Kov\'acs \cite{K07} generalizes  this result to certain intermediate spaces
 with arbitrary $\ell \in [0,p+1]$ and also provides sufficient conditions for when
 $q=q(\ell)= p \ell/(p+1)$ for all $\ell \in [0,p+1]$  (which are satisfied in our setting).

For splitting methods, where the linear part of the evolution equation is evaluated
exactly, a higher order of convergence has been obtained for specific choices of $\ell$
and  specific evolution equations in \cite{LubichNSE} and \cite{Ostermann2014}, see also Example \ref{ex:cubicNSE} below. While splitting methods are very
effective for simulating evolution equations for which the linear
evolution $\e^{tA}$ can easily be computed explicitly, Runge--Kutta
methods are still a good choice when an eigen-decomposition of $A$ is
not available,  for example for the semilinear wave equation in an
inhomogeneous medium, see Example ~\ref{ex:swe-nonhom}.
Moreover, the
simplest example of a Gauss--Legendre Runge-Kutta method, the implicit
mid point rule, appears to have some advantage over split step
time-semidiscretizations for the computation of wave trains for
nonlinear Schr\"odinger equations  because the
latter introduce an artificial instability \cite{Herbst}.

For Runge-Kutta time semidiscretizations of dissipative evolution equations, where $A$ is sectorial,  a  better 
 order of convergence  can be obtained, see  \cite{LeRoux} for the linear case and  \cite{LubichNonSmooth,LO93} 
and references therein for the semilinear case.
 Note that our approach is different from the approach of \cite{LubichNonSmooth,LO93}. 
  In \cite{LubichNonSmooth,LO93} some smoothness of the continuous solution is assumed and from that a (fractional) order of convergence
 is obtained, using the variation of constants formula.  
 The order of convergence obtained in \cite{LubichNonSmooth,LO93} is in general lower than
 in the linear case (where full order of convergence is obtained in the  parabolic case  \cite{LeRoux}), but no extra assumptions on the nonlinearity  $B(U)$ of the PDE  are made.
 In particular in \cite[Theorems 4.1 and 4.2]{LO93} the existence of $(p_s+2)$ time derivatives of the continuous solution $U(t)$ of a semilinear parabolic PDE  \eqref{eqn:see} is assumed, where $p_s$ is the stage order of the method.  This assumption is then used to estimate the  error of the numerical approximation of the inhomogenous part of the variation of constants formula. Here the stage order $p_s$ comes into play. 
  Note that   if the nonlinearity $B(U)$ of the evolution equation \eqref{eqn:see} only  satisfies the standard assumption rather than our assumption (B), i.e., is   smooth on $\kY$ only (so that the
  Hilbert space scale  is trivial with $L=0$) then  the existence of $U'(t)$ can be guaranteed
for $U^0 \in \kY_1$ by semigroup theory \cite{P83}, but
it is not clear whether higher order time derivatives of the solution $U(t)$ of \eqref{eqn:see} exist as assumed in \cite{LO93}  - therefore  in \cite{LO93} also  time-dependent perturbations of \eqref{eqn:see} are considered.
In this paper we instead take the approach of making 
assumptions (namely condition  (B)  on the nonlinearity $B(U)$ of the evolution equation  and
the condition that $U^0 \in \kY_\ell$)  which are straightforward to check 
and guarantee the existence of the time derivatives of the continuous solution $U(t)$ up to order $k\leq \ell$.
We then obtain an order of convergence $O(h^{p\ell/(p+1)})$ of the Runge-Kutta discretization which is identical to the order of convergence in the
 linear case \cite{BT79,K07}. 
 In  \cite[Theorem 2.1]{LubichNonSmooth} some smoothness of the inhomogeneity of the PDE is obtained from the smoothing properties of parabolic PDEs, and this is used
 to prove an order of convergence $h\log h$, without the  assumption of the existence of higher time derivatives of the continuous solution $U(t)$. Here we do not consider parabolic PDEs, so that we cannot use this strategy.

Alonso-Mallo and Palencia \cite{Palencia03} study  Runge-Kutta time discretizations of 
 inhomogeneous linear  evolution equations
 where the linear part creates a strongly continuous semigroup. Similarly as in \cite{LO93}
 they obtain an order of convergence depending on the stage order $p_s$ of the Runge-Kutta method.
 They assume the continuous solution $U(t)$ to be $(p+1)$-times differentiable in $t$, but in their context the condition $U(t) \in D(A^{p-p_s})$, where $p$ is the order of the numerical method,
  is in general not satisfied due to the inhomogeneous terms
 in the evolution equation, and this leads to a loss  in the order of convergence compared to our
 results. Note that in our setting, due to our condition (B) on the nonlinearity, provided $U(0) \in \kY_{p+1}$
  we have $U(t) \in D(A^{p+1}) = \kY_{p+1}$  and  $U(t)$ is $p+1$ times differentiable
   in $t$ (in the $\kY$ norm)  and so we get full order of convergence in this case (see \cite{OW10A}).
  Calvo et al \cite{CalvoEtAl07}
 study Runge-Kutta   quadrature methods for  linear evolution equations $\dot U(t) = A(t)U(t)$ which are well-posed
 and 
prove full order convergence  if the continuous solution $U(t)$ has $p+1$ time derivatives; they also  
 obtain fractional orders of convergence as in \cite{BT79}
for solutions $U(t) \in \kY_\ell$ with $\ell<p+1$.

We proceed as follows: in Section \ref{sec:hpde} we introduce the
class of semilinear evolution equations that we consider in this paper,  give some examples, 
review existence and regularity results
of \cite{P83,OW10A} for the semiflow, and adapt them to the case of
non-integer $\ell$. In
Section \ref{sec:rk_schemes} we introduce a class of $A$-stable Runge-Kutta
methods. We review existence and regularity of these methods   when
applied to the semilinear evolution equation \eqref{eqn:see} and  a
convergence result for sufficiently smooth initial data from
\cite{OW10A}. In Section \ref{sec:spec} we study the stability of the
semiflow and numerical method under spectral Galerkin truncation, and
establish estimates for the projection error.  Lemma~\ref{l.see_proj_err} and
\ref{l.reg-proj-num-method} are established in \cite{OW10B} for integer
values of $\ell$; for
completeness we review the proofs, which also work for non-integer
$\ell$. In Section \ref{sec:traj} we prove    our main result on
convergence   of $A$-stable Runge-Kutta discretizations of semilinear evolution equations
for 
non-smooth initial data. In Section \ref{s.appendix} we generalize our result to 
nonlinearities $B(U)$ which are defined on domains other than balls.
 

\section{Semilinear PDEs on a scale of Hilbert spaces}
\label{sec:hpde}
 
In this section we introduce a suitable functional setting for the
class of equations we subsequently study. 
 We review results from \cite{P83,OW10A} on the
local well-posedness and regularity of solutions of  \eqref{eqn:see} 
and give examples.

For a  Hilbert space $\kX$ we let
\begin{equation*}
  \kB^R_{\kX}(U^0)=\{U\in\kX:\norm{U-U^0}{\kX}\leq R\}
\end{equation*}
be the closed ball of radius $R$ around $U^0$ in $\kX$. We make the following assumptions
on the semilinear evolution equation \eqref{eqn:see}:

\begin{enumerate}[(A)]
\item $A$ is a normal linear operator on $\kY$ that generates a
  strongly continuous semigroup of linear operators $e^{tA}$ on $\kY$
  in the sense of \cite{P83}.
  \label{enum:A}
\end{enumerate}
It follows from assumption (A) that there exists $\omega\in \R$
with  
\begin{equation}
\Re(\spec(A))\leq \omega, \quad  \norm{e^{tA}}{\kY\to\kY}\leq   e^{\omega t},
  \label{eqn:semigroup_bound}
\end{equation}
see \cite{P83}. 
In light of (\ref{enum:A}) we define the
continuous scale of Hilbert spaces $\kY_\ell = D(A^\ell )$, $\ell \geq
0$, $\kY_0=\kY$. Thus the parameter $\ell$ is our measure of smoothness of the
data.  For $m>0$ 
we define $\P_m$ to be the spectral projection of $A$ to $\spec(A) \cap \kB^m_\C(0)$,
 let
$\Q_m=\id-\P_m$
and set $\P=\P_1$, $\Q = \id - \P$. We endow $\kY_\ell$ with the inner
product
\begin{equation}
  \ip{U_1,U_2 }{\kY_\ell}=\ip{\P U_1,\P U_2}{\kY} + \ip{|A|^\ell\Q U_1,|A|^\ell\Q U_2}{\kY},
  \label{eqn:inner_prod}
\end{equation}
which implies
\begin{equation}
 \label{e.A_ell}
\|A^\ell\|_{\kY_\ell \to \kY} \leq 1.
\end{equation}
We  deduce from assumption (A)
that for $u\in\kY$, $\lim_{m\to\infty}\P_mu=u$, 
and from \eqref{eqn:inner_prod} the
estimates 
\begin{equation}
  \norm{A^\ell\P_m U}{\kY}\leq m^\ell\norm{\P_m U}{\kY},\quad
 \|\P_m \|_{ \kY_\ell\to \kY_{\ell+k}} \leq m^k,
\quad \norm{\Q_m U}{\kY}\leq m^{-\ell}\norm{U}{\kY_\ell}
  \label{eqn:proj_est}
\end{equation}
for $\ell \geq 0$, $k\geq 0$, $m\geq 1$.

\begin{remark}
When $\ell$ lies in a discrete set such as  $ \N_0$,  for $\ell>0$  often the inner product
\begin{equation}\label{e.graphIP}
\ip{U_1,U_2}{\ell } =\ip{U_1,U_2}{\kY}+\ip{A^\ell U_1, A^\ell U_2}{\kY}
\end{equation}
is used on $\kY_\ell$ instead of  \eqref{eqn:inner_prod}. For $\ell=0$, for consistency, one defines
$\ip{U_1,U_2}{0} =\ip{U_1,U_2}{\kY}$. 
The reason why we do not use this
inner product here is that \eqref{eqn:inner_prod} is
 continuous in $\ell$ as $\ell \to 0$, but the graph inner product  \eqref{e.graphIP} is not:
 we have $\lim_{\ell \to 0} \ip{U_1,U_2}{\ell } = 2\ip{U_1,U_2}{\kY}= 2\ip{U_1,U_2}{0}$. 
\end{remark}

 To formulate our second assumption, on the nonlinearity $B$,
 we introduce the following notation: for Banach spaces $\kX$, $\kZ$, we denote by $\kE^i(\kX,\kZ)$
the space of $i$-multilinear bounded
mappings from $\kX$ to $\kZ$.  For $\kU \subseteq \kX$ we
  write $\kCb^k(\kU,\kZ)$ to denote the set of $k$ times
continuously differentiable functions $F \colon \interior\kU \to \kZ$ 
 such that $F$ and its derivatives $\D^i F$ are bounded as 
maps from  the interior $\interior\kU$ of $\kU$ to
$\kE^i(\kX,\kZ)$ and extend continuously to the boundary of $\interior\kU$ for $i\leq k$.
We set $\kCb(\kU,\kZ)= \kCb^0(\kU,\kZ)$.
Note that if $\dim \kX=\infty$, there are examples of continuous
 functions $F:\kU\to \kZ$ where $\kU$ is closed
and bounded, which do not lie in $\kCb(\kU,\kZ)$, see e.g. \cite[Remark 2.3]{OW10A}.
In the following for $\ell \in \R$ let $\lfloor\ell\rfloor$ be the
largest integer less than or equal to $\ell$ and $\lceil \ell \rceil$
be the smallest integer greater or equal to $\ell$.
Moreover for $R>0$ and $\ell\geq 0$ we abbreviate
\begin{equation}
 \kB_\ell^{R} = \kB_{\kY_\ell}^R(0).
\end{equation}
We are now ready to formulate our condition on the nonlinearity $B(U)$ of
\eqref{eqn:see}.

\begin{enumerate}[(B)]
\item There exists  $L\geq 0$, $I \subseteq [0,L]$,
$0,L \in I$,
$N\in \N$, $N > \lceil L\rceil $,  such that $B\in
  \kCb^{N- \lceil \ell \rceil }(\kB^R_\ell;\kY_\ell)$ for
all  $\ell\in I$ and $R>0$.
  \label{enum:B}
\end{enumerate}
We denote the supremum  of
$B:\kB_\ell^R\to\kY_\ell$ as $M_\ell[R]$ and the supremum of its   derivative as
 $M'_\ell[R]$,  and set $M[R] =M_0[R] $ and $M'[R] =M'_0[R]$.  
Moreover we define
\begin{equation}\label{e.I}
I^-:= \{ \ell \in I, \ell -k\in I, k =1,\ldots, \lfloor\ell \rfloor\}.
\end{equation}
We seek a solution $U(\cdot)\in
\kC([0,T];\kY_\ell)$ of (\ref{eqn:see}) for some $T>0$, $\ell \in I$, with initial data $U(0)=U^0\in\kY_\ell$, and write
$\Phi^t(U^0)\equiv\Phi(U^0,t)\equiv U(t)$. 
The following result is an extension  of Theorem 2.4 
  of
\cite{OW10A}, see also \cite{P83},
to non-integer $\ell$ and 
provides well-posedness and regularity of the semiflow $\Phi^t$ under
suitable assumptions.

\begin{theorem}[Regularity of the semiflow]\label{t.semiflow} 
  Assume that the semilinear evolution equation \eqref{eqn:see}
  satisfies (A) and (B). Let $R>0$. Then
  there is $T_*>0$ such that there
  exists a semiflow $\Phi$ 
which satisfies
\begin{subequations}
  \begin{equation}
    \Phi^t\in \kCb^{N}(\kB_0^{R/2};\kB^R_0) 
   \label{e.PhiUDeriv}
  \end{equation}
with uniform bounds in  $t\in [0,T_*]$. Moreover if $\ell \in I^-$ and 
 $k\in \N_0$ satisfies
$k \leq  \ell$,  then
\begin{equation}
 \Phi(U) \in \kCb^k([0,T_*];\kB^R_0) 
 \label{eqn:semiflow_regt}
\end{equation}
with uniform bounds in $U \in \kB_\ell^{R/2}$.
\label{e.semiflow_reg}
\end{subequations}
  The bounds on $T_*$ and   $\Phi$    depend only on $R$,  
  $\omega$ from \eqref{eqn:semigroup_bound}, and the bounds  afforded by
  assumption (B) on balls of radius $R$.
\end{theorem}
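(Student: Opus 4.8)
The plan is to build the semiflow from the variation-of-constants formula and to obtain the two assertions separately: the spatial smoothness \eqref{e.PhiUDeriv} by a parametrized contraction in $\kY_0=\kY$, and the temporal smoothness \eqref{eqn:semiflow_regt} by first propagating $\kY_\ell$-regularity and then differentiating the equation in time and bootstrapping. The only genuinely new ingredient relative to the integer-$\ell$ result of \cite{OW10A} is that $|A|^\ell$ is now the fractional power supplied by the spectral calculus of the normal operator $A$; it commutes with $e^{tA}$, $\P$, $\Q$ and $\P_m$, so all of \eqref{eqn:proj_est} and the commutation identities used below hold verbatim, and each time-differentiation still shifts the scale by an \emph{integer} amount.

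\emph{Existence and spatial smoothness.} On $\kC([0,T_*];\kB_0^R)$ I consider
\[
(\Gamma_{U^0}U)(t)=e^{tA}U^0+\int_0^t e^{(t-s)A}B(U(s))\,\d s .
\]
Using \eqref{eqn:semigroup_bound} and the bounds $M[R],M'[R]$ from assumption (B) at $\ell=0$ (which, since $\lceil 0\rceil=0$, also yields $B\in\kCb^N(\kB_0^R;\kY)$), a standard estimate shows that for $T_*$ small — depending only on $R$, $\omega$, $M[R]$, $M'[R]$ — the map $\Gamma_{U^0}$ leaves a ball invariant and is a contraction, uniformly for $U^0\in\kB_0^{R/2}$. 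Its fixed point defines $\Phi^t(U^0)$. Since the substitution operator $U(\cdot)\mapsto B(U(\cdot))$ inherits $N$ bounded derivatives from $B\in\kCb^N(\kB_0^R;\kY)$, the map $\Gamma$ is jointly $\kCb^N$ in $(U,U^0)$, so the uniform-contraction principle with parameters gives $U^0\mapsto\Phi^{\cdot}(U^0)\in\kCb^N$ and hence \eqref{e.PhiUDeriv}; the bounds on $\D^i\Phi^t$, uniform in $t\in[0,T_*]$, are read off from the linearized fixed-point equations.

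\emph{Temporal regularity.} First I propagate regularity: because $A$ is normal, $|A|^\ell$ commutes with $e^{tA}$, so \eqref{eqn:inner_prod} gives $\|e^{tA}\|_{\kY_\ell\to\kY_\ell}\le e^{\omega t}$, and the identical contraction run in $\kC([0,T_*];\kY_\ell)$ together with uniqueness shows that the solution with $U^0\in\kB_\ell^{R/2}$ stays in $\kC([0,T_*];\kY_\ell)$, with a $\kY_\ell$-bound controlled by $R$, $\omega$ and the $(B)$-constants at level $\ell$ (the persistence-of-regularity step of \cite{OW10A}). I then differentiate $U'(t)=AU(t)+B(U(t))$ repeatedly and prove by induction that $U^{(j)}(\cdot)\in\kC([0,T_*];\kY_{\ell-j})$ for $j\le k$: the Fa\`a di Bruno expansion writes $U^{(j)}$ as $AU^{(j-1)}$ plus terms $\D^iB(U)(U^{(a_1)},\dots,U^{(a_i)})$ with $a_1+\dots+a_i=j-1$; here $U^{(j-1)}\in\kY_{\ell-(j-1)}$ gives $AU^{(j-1)}\in\kY_{\ell-j}$, while every $U^{(a_m)}\in\kY_{\ell-a_m}\subseteq\kY_{\ell-j}$, so evaluating $\D^iB(U)$ at the level $\ell-j$ lands the nonlinear terms in $\kY_{\ell-j}$ as well. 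Taking $j=k\le\ell$ gives $U^{(k)}\in\kC([0,T_*];\kY_{\ell-k})\hookrightarrow\kC([0,T_*];\kY)$, i.e.\ \eqref{eqn:semiflow_regt}, with $\kY$-bounds depending only on the $\kY_\ell$-bound of the trajectory and the $(B)$-constants, hence uniform in $U\in\kB_\ell^{R/2}$.

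The step I expect to require the most care — and the one explaining the hypothesis $\ell\in I^-$ — is this inductive bootstrap. To keep applying $A$ at each differentiation one must retain the \emph{intermediate} regularity $U^{(j)}\in\kY_{\ell-j}$, not merely $U^{(j)}\in\kY$, which forces the derivatives $\D^iB(U)$ to act boundedly on $\kY_{\ell-j}$ for $j=1,\dots,k$; this is exactly assumption (B) at the levels $\ell-1,\dots,\ell-\lfloor\ell\rfloor$, i.e.\ $\ell-m\in I$ for $m=1,\dots,\lfloor\ell\rfloor$, which is the definition \eqref{e.I} of $I^-$. The derivative count is admissible because the highest derivative needed for $U^{(j)}$ is $\D^{j-1}B$ \emph{at level $\ell-j$}, requiring $N-\lceil\ell-j\rceil\ge j-1$, i.e.\ $N\ge\lceil\ell\rceil-1$, which holds since $N>\lceil L\rceil\ge\lceil\ell\rceil$; evaluating at $\ell-j$ rather than at the higher level where the arguments live is what keeps the count within the available $N-\lceil\ell-j\rceil$ derivatives. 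The sole analytic point is the passage from mild to classical solution in each $\kY_{\ell-j}$, handled by the difference-quotient argument of \cite{P83,OW10A} now read on the fractional scale $\{\kY_{\ell-j}\}$.
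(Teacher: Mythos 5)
Your proposal is correct, and its first half coincides with the paper's argument: the spatial smoothness \eqref{e.PhiUDeriv} is obtained in both cases by a uniform contraction with parameters applied to the variation-of-constants map, with the substitution operator inheriting the $N$ derivatives granted by (B) at $\ell=0$. The temporal part, however, follows a genuinely different route. The paper rescales time, working with $\Pi(W,U,T)$ of \eqref{eqn:see_mild} on $\kCb([0,1];\kY_{\ell-j})$ so that the final time $T$ becomes a \emph{parameter} of the fixed-point problem; it runs the contraction at every level $\ell-j$, $j=0,\dots,k$, and then produces the time derivatives by implicit differentiation of the fixed-point identity $\Pi(W(U,T),U,T)=W(U,T)$ in $T$ (each $T$-derivative of $\Pi$ brings down a factor of $A$, which is exactly what the descent along the scale absorbs). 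You instead run the contraction only at levels $0$ and $\ell$ (existence plus persistence of regularity, using that $e^{tA}$ contracts up to $e^{\omega t}$ on each $\kY_\ell$ by normality), and then differentiate the equation itself, bootstrapping $U^{(j)}\in\kC([0,T_*];\kY_{\ell-j})$ via Pazy's mild-to-classical regularity and Fa\`a di Bruno, with (B) invoked at the levels $\ell-1,\dots,\ell-\lfloor\ell\rfloor$ only to bound $B$ and its derivatives there --- your derivative count $N-\lceil\ell-j\rceil\ge j-1$ and your norm monotonicity $\kY_{\ell-a}\subseteq\kY_{\ell-j}$ for $a<j$ both check out, and both proofs need $\ell\in I^-$ for the same structural reason. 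What each approach buys: the paper's scheme is uniform --- the single contraction-with-parameters machinery yields $U$-derivatives, $t$-derivatives, and (as remarked after Theorem \ref{t.semiflow}) mixed derivatives, without ever invoking a separate semigroup regularity theorem; your bootstrap is more elementary and slightly more economical (no contractions at the intermediate levels), and it makes the integer-step descent behind the definition of $I^-$ completely transparent, but it delivers only pure $t$-derivatives and must lean on the mild-to-classical step from \cite{P83,OW10A} at the base of the induction, just as the paper leans on \cite{OW10A} for the implicit differentiation on the scale.
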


\begin{proof}
The proof of \eqref{e.semiflow_reg} is an application of a contraction mapping theorem with
parameters to  the map
\begin{equation}
  \Pi(W,U,T) =e^{tTA}U  + \int_0^t e^{T(t-\tau)A}B(W(\tau)) \d \tau,
  \label{eqn:see_mild}
\end{equation}
on the scale of Banach spaces
$\kZ_{\ell} = \kCb([0,1];\kY_{\ell})$,   $\ell \in I$, where we define $\kZ:= \kZ_0$.  
The solution $W(U,T)(t) =\Phi^{tT}(U)$  of \eqref{eqn:see} is obtained as a fixed point  of \eqref{eqn:see_mild} for   $U \in \kB^{R/2}_\kY(0)$
as in \cite{OW10A}. Here $\Pi:\kB^{R}_{\kZ}(0) \times \kB^{R/2}_\kY(0)\times [0,T_*]\to
   \kZ$.
  In order to apply the contraction mapping theorem we first check that $\Pi(W,\cdot,\cdot)$ maps $\kB^{R}_{\kZ}( 0)$ to itself: 
For   $U\in \kB^{R/2}_\kY(0)$    we have
  \begin{align}
    \|\Pi(W,U,T) \|_{\kZ} &\leq
\max_{\tau \in [0,1]} \|\e^{\tau TA} U\|_{\kY} 
 + T \e^{\omega T} M_0[R]  \label{e.ProblemTerm}\\
    &\leq  \e^{\omega T}R/2
    + T \e^{\omega T} M_0[R]
    \leq R\notag
  \end{align}
  for $T\in [0,T_*]$ and $T_*$ small enough.
So $\Pi$ maps $\kB^{R}_{\kZ}( 0)$ to itself. Moreover for sufficiently small $T_*$ there is $c\in [0,1)$ such
that   $\|\D\Pi(W,U,T)\|_{\kZ\to \kZ}\leq c$ for all $W \in \kB^R_{\kZ}(0)$, $U \in \kB^{R/2}_\kY(0)$ and 
$T \in [0,T_*]$ so that
$\Pi$ is a
  contraction. Hence, $W\in
  \kCb(\kB^{R/2}_{\kY}( 0) \times [0,T_*];\kB^{R}_{\kZ}( 0))$
with $N$ derivatives in the first component. 
This proves statements 
\eqref{e.PhiUDeriv} and also $\Phi(U) \in \kCb^k([0,T_*];\kB_0^R)$  in the case $k=0$. 

For  $k \in \N$, $k\leq \ell$ it follows from the fact $\ell \in I^-$  that the
  above argument  applies with $\kY$ replaced by $\kY_{\ell-j}$,  $j=0,\ldots,
  k$. Hence there is some $T_*>0$ such that  $\Phi \in
  \kCb(\kB^{R/2}_{\ell-j}\times[0,T_*];\kB^{R}_{\ell-j})$ for $j=0,\ldots,
  k$. As detailed in \cite{OW10A} for $U \in \kB^{R/2}_{\ell}$ the 
$t$ derivatives up to order $k$ can then be obtained by
  implicit differentiation of $\Pi(W(U,T),U,T)=W(U,T)$ with $\Pi$ defined
  above which implies that $\Phi(U) \in \kCb^k([0,T_*];\kB^R_0)$  for $k\leq \ell$ with uniform bounds in
$U \in \kB^{R/2}_{\ell}$.
\end{proof}

 Note that this theorem extends to mixed $(U,t)$ derivatives
which are, however, in  general only strongly   continuous in $t$, see \cite{OW10A}
for details. For our purposes in this paper the above theorem is sufficient.

\begin{example} [Semilinear wave equation, 
 periodic boundary conditions] \label{ex:SWE} Consider the
  semilinear wave equation
  \begin{equation}
    \label{e.swe}
    \partial_{tt}u = \partial_{xx} u - V'(u)
  \end{equation} 
  on $[0,2\pi]$ with periodic boundary conditions. Writing
  $v=\partial_tu$ and $U=(u,v)^T$ Equation \eqref{e.swe} takes the
  form \eqref{eqn:see} where
  \begin{equation}
    \label{e.AB}
    A =\Q_0 \tilde A, \quad \tilde A =
    \begin{pmatrix}
      0 & \id \\ \partial^2_x & 0
    \end{pmatrix} \,, \qquad B(U) = {0 \choose -V'(u)} + \P_0 \tilde A
    U.
  \end{equation}
Here $\P_0$ is the spectral projector of $\tilde A$ to the eigenvalue $0$.
  Since the Laplacian is diagonal in the Fourier representation with
  eigenvalues $-k^2$ for $k\in\Z$, the eigenvalue problem for $A$
  separates into $2\times 2$ eigenvalue problems on each Fourier mode,
  and it is easy to see that the spectrum of $A$ is given by
  \[
  \spec A = \{ \i k \colon k \in \Z\} \setminus \{0\}.
  \]
  Note that $\P_0 \tilde A$ has a Jordan block and is hence included
  with the nonlinearity $B$.  We denote the Fourier coefficients of a
  function $u \in \kL^2([0,2\pi];\R^d)$ by $\hat{u}_k$, so that
  \begin{equation}
    u(x) = \frac{1}{\sqrt{2\pi}} 
    \sum_{k\in\Z} \hat{u}_k \, {\e}^{{\i}kx} \,.
    \label{e.fourier}
  \end{equation}
  Then the Sobolev space $ \kH_{\ell}([0,2\pi];\R^d)$ is the Hilbert space of all $u
  \in \kL^2([0,2\pi];\R^d)$ for which
  \[
  \norm[2]{u}{\kH_{\ell}} = \langle u , u \rangle_{\kH_{\ell}} <
  \infty \,,
  \]
  where the inner product is given by
  \begin{equation}
    \label{e.inner-product_HEll}
    \langle u, v \rangle_{\kH_{\ell}} = \langle\hat{u}_0 , \hat{v}_0 \rangle_{\R^d}+ \sum_{k\in \Z} |k|^{2\ell} \, \, \langle \hat{u}_k , \hat{v}_k \rangle_{\R^d}.
  \end{equation}
   In the setting of the
  semilinear wave equation, we have
  \begin{equation}
   \label{eq:Yrl}
  \kY_{\ell} = \kH_{\ell+1}([0,2\pi];\R) \times \kH_{\ell}([0,2\pi];\R) \,,
  \end{equation}
 and   the group $e^{tA}$ is unitary on any $\kY_{\ell}$. So (A) is
  satisfied.  Moreover in this example, the inner product \eqref{eqn:inner_prod} on $\kY_\ell$ corresponds to   the inner product
   defined via \eqref{e.inner-product_HEll}.   If the potential $V:\R\to \R$ is analytic, then, by
  Lemma~\ref{l.nonlinearityH^l} a) below, the nonlinearity $B(U)$ is
  analytic as map of $\kY_{\ell}$ to itself for any $\ell\geq 0$ and $B$ and its derivatives are  bounded on balls around $0$.
  Hence  assumption (B) holds for any $L\geq 0$ 
and $N> \lceil L \rceil$ with $I=[0,L]$.  
\end{example}

\begin{example}[Semilinear wave equation, 
 non-analytic nonlinearity] \label{ex:SWEnonAnalytic}
If $V\in \kC^{N+2}(\R)$   then (B)  holds with  $I = [0,L]$ and $\lceil L\rceil < N$.  To see this note that     Lemma \ref{l.nonlinearityH^l} c)
applied to $f = V' \in \kC^{N+1}(\R)$    ensures that $f \in \kCb^{N-\lfloor \ell \rfloor}(\kB_{\kH_{\ell+1}}^R;
\kH_{\ell})$  for all $R>0$ and therefore that (B) holds, noting that $\kY_\ell$ is as in \eqref{eq:Yrl}. Here we abbreviated 
$\kH_{\ell}:=\kH_{\ell}([0,2\pi];\R)$.
\end{example}

\begin{example} [Semilinear wave equation, Dirichlet boundary conditions]  
\label{ex:SWE-Dirichlet} 
When endowed with homogeneous Dirichlet boundary 
conditions $u(t,0) = u(t,\pi)=0$  the linear part  $A $  of the semilinear wave 
equation \eqref{e.swe} still generates a unitary group. In this case we have
$\P_0=0$, $A=\tilde A$, and
\[
\kY_\ell =D(A^\ell) =  \kH^0_{\ell+1} ([0,\pi];\R) \times \kH_\ell^0 ([0,\pi];\R).
\]
Here  $\kH_\ell^0 ([0,\pi];\R) = D((-\Delta)^{\ell/2})$, where
$\Delta$ denotes the Laplacian with Dirichlet boundary conditions.
 By  \cite{Fujiware} for $\ell \notin 2  \N_0+\frac12 $
\[
 \kH_\ell^0 ([0,\pi];\R) = \{ u \in \kH_\ell ([0,\pi];\R): u^{(2j)}(0)=u^{(2j)}(\pi)=0
~~\mbox{for}~~0\leq 2j< \ell-\frac12 \}.
\]
If $V:\R\to \R$ is analytic and even so that $f=-V'$ satisfies the required boundary conditions, the conclusions of
Lemma \ref{l.nonlinearityH^l} a) apply to $f=-V'$ on  the spaces $\kH_{\ell+1}^0 ([0,\pi];\R)$  and 
$\kH_{\ell}^0 ([0,\pi];\R)$,  provided  that $\ell +1\notin   \frac{1}2 + 2\N_0$ or 
$\ell\notin   \frac{1}2 + 2\N_0$, respectively. Since we need $-V'(u)$ to map from an open
set of $\kH_{\ell+1}^0 ([0,\pi];\R)$ into $\kH_{\ell}^0 ([0,\pi];\R)$ it is sufficient to satisfy
either of those two  constraints on $\ell$,  at least one of which is always true.
So in this example 
condition (B) is   satisfied with $I = [0,L]$ for any $L\geq 0$. Moreover the condition that 
$V$ is even may be relaxed to the requirement that 
 $V^{(2j+1)}(0)=0$ for $0\leq 2j\leq  L+\frac12$. 
\end{example}

\begin{example} [Semilinear wave equation, Neumann boundary conditions]  
\label{ex:SWE-Neumann} In the case of Neumann boundary conditions on $  [0,\pi]$,    the operator $A=\tilde A$
from \eqref{e.AB} is again skew-symmetric and 
  has the same spectrum as in Example \ref{ex:SWE}.  In
this case, $\kY_\ell = \kH_{\ell+1}^\nb( [0,\pi];\R ) \times \kH_{\ell}^\nb( [0,\pi];\R )$. Here 
$\kH_\ell^\nb ([0,\pi];\R) = D((-\Delta)^{\ell/2})$, where
$\Delta$ now denotes the Laplacian with Neumann boundary conditions.
Due to \cite{Fujiware}
\[
  \kH_{\ell}^\nb([0,\pi];\R ) 
  = \{u \in \kH_{\ell}([0,\pi];\R ) \colon u^{(2j+1)}(0) = u^{(2j+1)}(\pi)=0
    \text{ for } 0\leq 2 j <\ell   -\frac{ 3}2  \} \,,  
\]
for $\ell \notin 3/2+2\N_0$.
If $V:\R\to\R$ is analytic, then the conclusions of Lemma \ref{l.nonlinearityH^l} a) apply to $f=-V'$ on  the spaces $\kH_{\ell+1}^\nb ([0,\pi];\R)$ ($\kH_{\ell}^\nb ([0,\pi];\R)$) whenever
 $\ell +1\notin   \frac{3}2 + 2\N_0$ ($\ell\notin   \frac{3}2 + 2\N_0$).  This follows from the fact that all terms in the sum
obtained from computing $\partial_x^{2j+1}f(u)$  contain at least one odd derivative of $u$ of order
at most $2j + 1$, so that the required boundary conditions for $f$ are satisfied.
Hence
Condition (B) is   satisfied for any $L\geq 0$ with $I =  [0,L] $.
\end{example}

\begin{example}[A semilinear wave equation in an inhomogeneous
material]
\label{ex:swe-nonhom}
Instead of \eqref{e.swe}, let us consider the non-constant coefficient
semilinear wave equation
\[ 
  \partial_{tt} u = \partial_x (a \, \partial_x u) + b \, u -V'(u)
\]
 with   periodic boundary conditions
where $V\in \kC^{N+2}(\R)$, $a,b \in \kCb^N([0,2\pi];\R)$ are $2\pi$-periodic 
with $a(x) >0$ and $b(x) \leq 0$ for $x
\in [0,2\pi]$. Then the  conclusions of Example \ref{ex:SWEnonAnalytic} apply.
\end{example}

\begin{example}[Nonlinear Schr\"odinger equation] \label{ex:NSE}
  Consider the nonlinear Schr\"odinger equation
  \begin{equation}
    \label{e.nse}
    \i \, \partial_t u 
    =  \partial_{xx} u + \partial_{\bar{u}} V(u,\bar{u})
  \end{equation}
  on $[0,2\pi]$ with periodic boundary conditions, where $V(u,\bar{u})$
  is assumed to be analytic as a function in $u_1=\Re\,(u)$ and $u_2=\Im\,(u)$.  Setting $U
  =(u_1, u_2)$, we can write \eqref{e.nse} in the form \eqref{eqn:see} with
  \begin{equation}
    \label{e.nlsdefs}
    A = \left( \begin{array}{cc} 0 &  \partial^2_x\\
    -\partial_x^2 &  0 
    \end{array}
    \right) \,,  \quad 
    B(U) = \frac12 { \partial_{u_2} V \choose - \partial_{u_1} V} .
  \end{equation}
  The Laplacian is diagonal in the Fourier representation
  \eqref{e.fourier} with eigenvalues $-k^2$ and
  $\kL^2([0,2\pi];\C)$-orthonormal basis of eigenvectors $\e^{\pm \i k
    x}/\sqrt{2\pi}$ where $k \in \Z$. Hence, the spectrum of $A  $ is given by
  \[
  \spec A = \{ -\i k^2 \colon k \in \Z\}
  \]
  and $A$ is normal and generates a unitary group on $\kL^2([0,2\pi];\C)$
  and, more generally, on every $\kH_{\ell}([0,2\pi];\C)$ with $\ell \geq
  0$.

  By Lemma~\ref{l.nonlinearityH^l} a)  below the nonlinearity $B(U)$
  defined in \eqref{e.nlsdefs} is analytic as map from $\kH_{\ell}([0,2\pi];\R^2)$ to
  itself for every $\ell>1/2$.  Hence, assumption (B) holds for the
  nonlinear Schr\"odinger equation \eqref{e.nse} for any $I=[0,L]$, $L\geq 0$ if we
  set $\kY_\ell = \kH_{2\ell+\alpha}([0,2\pi];\R^2)$ for $\alpha>1/2$. 

When we equip the nonlinear Schr\"odinger equation \eqref{e.nse}
with Dirichlet (Neumann)  boundary conditions
we need to require that $\ell +\frac\alpha 2 \notin \N_0 + \frac{1}4$ ($\ell +\frac\alpha 2 \notin \N_0 + \frac{3}4$)
and, for Dirichlet boundary conditions,  we need the potential $V$
 to be even  or satisfy 
$V^{(2j+1)}(0)=0$ for $0\leq j< L+\alpha-\frac{1}4$.
Here $I = [0,L] \setminus (\N_0 + \frac{1}4-\frac\alpha 2)$ for Dirichlet boundary conditions
and $I = [0,L] \setminus (\N_0 + \frac{3}4-\frac\alpha 2)$ for Neumann boundary conditions.
\end{example}

The nonlinearities of the PDEs in the above examples are  superposition operators $f \colon \kH_{\ell}([0,2\pi];\R^d) \to
\kH_{\ell}([0,2\pi];\R^d)$ of smooth functions $f:D \subseteq \R^d \to \R^d$ or restrictions of
such operators to  spaces encorporating boundary conditions.  To prove that these superposition operators
satisfy assumption (B) we have employed the
following lemma. Part a) of this lemma has already been stated in slightly different
form in \cite{Ferrari98,Matthies01}, and parts b) and c) follow from \cite{OW10A}.

\begin{lemma}[Superposition operators]\label{l.nonlinearityH^l}
Let $\Omega \subseteq \R^n$ be an open set satisfying the cone property. 
\begin{itemize}
\item[a)]  
 Let  $\rho>0$ and let $f \colon \kB_{\C^d}^\rho \to \C^d$ be analytic. If $\Omega$ is unbounded assume
   $f(0)=0$. Then $f$ is also analytic as a
  function from $\kB_{\kH_\ell}^R$ to  $\kH_{\ell}:= \kH_{\ell}(\Omega;\C^d)$  for every $\ell >
  n/2$ and $R\leq \rho/c$ with $c$ from  \eqref{eq:uvHm} below. Moreover $f:\kB_{\kH_\ell}^R \to \kH_\ell$ and its derivatives up to order $N$ are bounded with $N$-dependent bounds  for arbitrary $N\in \N$.
  \item[b)]
  Let  $f\in \kCb^{N}(D,\R^d)$  for some open  set $D \subset \R^d$ and $N \in \N$. If $\Omega$ is unbounded assume
   $f(0)=0$.  Let  $j \in \N$ be such that $j> n/2$. Let $\kD$ be an   $\kH_j$ bounded subset of 
\[
\{ u \in \kH_j(\Omega;\R),~ u(\Omega) \subset D \}
\]
and  for  $R>0$, $k\in \N$ with $k\geq j$ let
\begin{equation}
\label{e.Dk-superpos}
\kD_k = \kD \cap
 \kB^R_{\kH_k}(0).
\end{equation}
  Here  $\kH_k =\kH_{k}(\Omega;\R^d) $. Then,
 \[
 f \in \kCb^{N-k}(\kD_k; \kH_k), \quad\mbox{for}\quad k \in  \{j,\ldots, N\}
 \]
    with $R$-dependent bounds.
  \item[c)]
  Let  $D$, $f$ and $j$ be as in b)  and   let $L> n/2$ be such that $\lfloor L\rfloor \leq  N$. Then 
\[
f  \in \kCb^{N-\lfloor \ell \rfloor}(\kD_{\ell}; \kH_{\ell-1}) \quad\mbox{for all}\quad \ell \in [j,L], 
\]
with $\kD_\ell$ defined as in \eqref{e.Dk-superpos}.
  \end{itemize}

\end{lemma}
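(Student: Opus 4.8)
The plan is to treat the three parts in order, proving a) and b) directly and deducing c) from b) by an embedding argument. The common tool is the multiplicative structure of $\kH_\ell(\Omega;\C^d)$ for $\ell>n/2$: since $\Omega$ has the cone property we have the Sobolev embedding $\kH_\ell\hookrightarrow\kL^\infty$ and $\kH_\ell$ is a Banach algebra, so there is a constant $c=c(n,\ell,\Omega)$ with $\norm{uv}{\kH_\ell}\le c\norm{u}{\kH_\ell}\norm{v}{\kH_\ell}$ --- the estimate \eqref{eq:uvHm} --- together with the Moser (tame) estimate $\norm{uv}{\kH_k}\le c\bigl(\norm{u}{\kL^\infty}\norm{v}{\kH_k}+\norm{v}{\kL^\infty}\norm{u}{\kH_k}\bigr)$ for integer $k\ge0$. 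Both are standard and I would simply quote them from the Sobolev-space literature.

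For a), I would expand the analytic $f$ about the origin as $f(z)=\sum_{m\ge0}f_m(z,\ldots,z)$ with $f_m\in\kE^m(\C^d,\C^d)$ symmetric and Cauchy estimates $\norm{f_m}{}\le M\rho^{-m}$, $M=\sup_{\kB_{\C^d}^\rho}|f|$; if $\Omega$ is unbounded the hypothesis $f(0)=0$ removes the constant term $f_0$, which would otherwise fail to lie in $\kH_\ell(\Omega)$. Writing $F(u)=f\circ u$ and applying \eqref{eq:uvHm} to each monomial gives $\norm{f_m(u,\ldots,u)}{\kH_\ell}\le\norm{f_m}{}\,c^{m-1}\norm{u}{\kH_\ell}^m$, so the operator series $F(u)=\sum_m f_m(u,\ldots,u)$ is dominated by a geometric series that converges once $c\norm{u}{\kH_\ell}\le cR\le\rho$. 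Thus $F$ is a locally uniformly convergent power series of continuous homogeneous polynomials on $\kB_{\kH_\ell}^R$, hence analytic as a Banach-space map, and termwise differentiation yields the bounds on $\D^iF$ up to any order $N$.

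For b), I would show $F=f\circ(\cdot)\in\kCb^{N-k}(\kD_k;\kH_k)$ via the chain rule together with the Moser estimate. The $\kH_j$-bound on $\kD$ controls $\norm{u}{\kL^\infty}$ and confines $u(\Omega)$ to a fixed compact subset of $D$ on which $f$ and all $\partial^\gamma f$, $|\gamma|\le N$, are bounded. A Fa\`a di Bruno expansion writes $\partial^\beta(f\circ u)$, $|\beta|\le k$, as a sum of terms $(\partial^\gamma f)(u)\prod_s\partial^{\beta_s}u$ with $\sum_s|\beta_s|=|\beta|$; estimating each product in $\kL^2$ by the tame estimate bounds $\norm{f\circ u}{\kH_k}$ by a polynomial in $\norm{u}{\kH_k}$ with coefficients depending on the $\kCb^k$-norm of $f$ and on $\norm{u}{\kH_j}$, hence uniformly on $\kD_k$. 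The $r$-th Fr\'echet derivative is the superposition operator $\D^rF(u)(v_1,\ldots,v_r)=(f^{(r)}\circ u)(v_1,\ldots,v_r)$; for the same Moser argument to place it in $\kH_k$ one needs $f^{(r)}\in\kCb^k$, i.e.\ $f\in\kCb^{r+k}$, which holds exactly for $r\le N-k$, explaining the loss of $k$ orders. The step I expect to be the main obstacle is proving not mere boundedness but \emph{continuity} of the top derivative $\D^{N-k}F$, whose symbol $f^{(N)}$ is only continuous; I would handle this by approximating $f^{(N)}$ uniformly on the relevant compact set by smooth functions, controlling the error through the tame estimate and uniform continuity, which simultaneously yields the continuous extension to the boundary of $\kD_k$ demanded by the definition of $\kCb$.

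Finally, c) follows from b) by embedding. For $\ell\in[j,L]$ set $k=\lfloor\ell\rfloor$, so that $j\le k\le\lfloor L\rfloor\le N$ and b) gives $F\in\kCb^{N-\lfloor\ell\rfloor}(\kD_{\lfloor\ell\rfloor};\kH_{\lfloor\ell\rfloor})$. Since $\norm{u}{\kH_{\lfloor\ell\rfloor}}\le\norm{u}{\kH_\ell}$ we have $\kD_\ell\subseteq\kD_{\lfloor\ell\rfloor}$, and since $\lfloor\ell\rfloor\ge\ell-1$ the inclusion $\kH_{\lfloor\ell\rfloor}\hookrightarrow\kH_{\ell-1}$ is bounded; composing $F$ with the bounded linear embeddings on the domain and target sides preserves the $\kCb^{N-\lfloor\ell\rfloor}$ property, giving $f\in\kCb^{N-\lfloor\ell\rfloor}(\kD_\ell;\kH_{\ell-1})$ with the stated $R$-dependent bounds.
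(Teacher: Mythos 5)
Your proposal is correct, and for parts a) and c) it follows essentially the paper's own route: a) is the power-series-plus-algebra-inequality argument (you phrase the coefficient bounds via Cauchy estimates on the multilinear Taylor coefficients, the paper via a scalar majorant series $g(s)=\sum|a_n|s^n$ --- a cosmetic difference), and c) is exactly the paper's embedding argument $\kD_\ell\subseteq\kD_{\lfloor\ell\rfloor}$, $\kH_{\lfloor\ell\rfloor}\subseteq\kH_{\ell-1}$. The real divergence is in part b): the paper does not prove it at all, but cites \cite[Theorem 2.12]{OW10A} and only illustrates the idea in the toy case $n=1$, $N=1$, $j=k=1$ (where $\partial_x f(u)=f'(u)\partial_x u\in\kL_2$ by Sobolev embedding). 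You instead supply the full standard argument --- Fa\`a di Bruno expansion of $\partial^\beta(f\circ u)$, Moser/tame product estimates to control everything by $\norm{u}{\kH_j}$ and $\norm{u}{\kH_k}$, identification of $\D^rF(u)$ with the superposition of $f^{(r)}$ (which is where the loss of $k$ derivatives, $r\le N-k$, comes from), and uniform approximation of $f^{(N)}$ to get continuity of the top derivative. This buys a self-contained proof where the paper outsources one. One small imprecision: the $\kH_j$-bound on $\kD$ confines $u(\Omega)$ to a \emph{bounded} subset of $D$, not necessarily a compact subset of $D$ itself, since the values may approach $\partial D$; the argument survives because the paper's definition of $\kCb^N(D,\R^d)$ requires $f$ and its derivatives to be bounded on $D$ and to extend continuously to the boundary, so $f^{(N)}$ is uniformly continuous on the (compact) closure of $D\cap\kB^C_{\R^d}(0)$, which is all your approximation step needs.
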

\begin{proof}
We restrict to the case $d=1$. A generalization to $d>1$ is straightforward.

To prove a)
 let $\ell>n/2$. Then there exists a constant $c=c(\ell)$ such that
  for every $u, v \in \kH_{\ell}(\Omega;\C)$ we have $uv \in
  \kH_{\ell}(\Omega;\C)$ with
  \begin{equation}\label{eq:uvHm}
    \norm{uv}{\kH_{\ell}}
    \leq c \, \norm{u}{\kH_{\ell}(\Omega;\C)} \, \norm{v}{\kH_{\ell}(\Omega;\C)} \,,
  \end{equation}
  see, e.g., \cite{Adams}. Let $f$ be analytic on $\kB_{\C}^\rho$ and let
  \begin{equation}
    \label{e.fTaylor}
    f(z) = \sum_{n=0}^\infty \, a_n \, z^n \,
  \end{equation}
  be the Taylor series of $f$ around $0$ for $|z|\leq \rho$. Let $g \colon
  \R\to \R$ be its majorization
  \[
  g(s) = \sum_{n=0}^\infty \, \lvert a_n \rvert \, s^{n} \,.
  \]
  By applying the algebra inequality \eqref{eq:uvHm} to each term of
  the power series expansion \eqref{e.fTaylor} of $f(u)$, we see that
  the series converges for every $u \in \kH_{\ell}$ provided $\ell >
  n/2$, and that
  \begin{equation}
    \norm{f(u)}{\kH_{\ell}} \leq c^{-1} \, g \bigl( c \, \norm{u}{\kH_{\ell}} \bigr)  + |a_0|(\sqrt{|\Omega|} - c^{-1})\,,
    \label{e.majorant}
  \end{equation}
  where $c$ is as in \eqref{eq:uvHm}, $R\leq \rho/c$  and $a_0=0$ if $\Omega$ is unbounded. In
  other words, $f$ is analytic and bounded as function from a ball of radius $R$ around $0$ in  $\kH_{\ell}= \kH_{\ell}(\Omega;\C)$
  to $\kH_{\ell}$. Similarly we see that the
  same holds for the derivatives of $f$.
  
    To prove b)  note that $\kD$ is well-defined because by the Sobolev embedding theorem
  $\kH_j(\Omega;\R) \subseteq \kCb(\Omega;\R)$. In
 \cite[Theorem 2.12]{OW10A}, the statement was proved in the case $n=1$. The extension to the case
 $n>1$ is straightforward. Here let us just illustrate the idea of the proof for the  example $n=1$, $N=1$ and  $j=k=1$. Then 
$f \in \kCb^1(\kD_1;\kL_2)$
 by the Sobolev embedding theorem, but also  $f \in \kCb(\kD_1;\kH_1)$ since for this we only need that
$\partial_x f(u) = f'(u) \partial_x u \in \kL_2$ with uniform bound in $u \in\kD_1$ 
 which is again true by the Sobolev embedding theorem. 
 
 To prove c) note that
   for $\ell \in [j,L]$  we know from b) that
$f  \in \kC^{N-\lfloor \ell \rfloor}(\kD_{\lfloor \ell \rfloor}; \kH_{\lfloor\ell \rfloor})$. Since 
$\kD_{\ell} \subseteq \kD_{\lfloor \ell \rfloor}$  
and $\kH_{\lfloor\ell \rfloor} \subseteq \kH_{ \ell -1}$  this implies
$f  \in \kC^{N-\lfloor \ell \rfloor}(\kD_{\ell}; \kH_{\ell -1})$.
\end{proof}


\section{Runge-Kutta time semidiscretizations}
\label{sec:rk_schemes}

In this section we apply an A-stable Runge-Kutta method in time to the evolution
equation \eqref{eqn:see}, and establish well-posedness and regularity
 of the numerical method on the infinite dimensional space
$\kY$.

Given an $(s,s)$ matrix $\a$, and
a vector $\b\in\R^s$, we define the corresponding Runge-Kutta method
by
\begin{subequations}
  \begin{align}
    &W = U^0\1 + h\a (AW+B(W)),\label{eqn:rk_stage}\\
    &\Psi^h(U^0) = U^0+h\b^T(AW+B(W))\label{eqn:rk_step},
  \end{align}
  \label{eqn:rk_both}
\end{subequations}
where
\begin{equation*}
  U \1 = 
  \begin{pmatrix}
    U \\
    \vdots \\
    U
  \end{pmatrix}\in\kY^s ~~\mbox{ for }~~U \in \kY,\quad
  W =
  \begin{pmatrix}
    W^1 \\
    \vdots \\
    W^s
  \end{pmatrix},\quad B(W) =
  \begin{pmatrix}
    B(W^1) \\
    \vdots \\
    B(W^s)
  \end{pmatrix}.
\end{equation*}
Here, $W^1,\ldots,W^s$ are the stages of the method, we understand $A$
to act diagonally on the vector $W$, i.e., $(AW)^i=AW^i$, and
\begin{equation*}
  (\a W)^i=\sum_{j=1}^s\a_{ij}W^j,\quad \b^TW = \sum_{i=1}^s \b_iW^i.
\end{equation*}
We define
\[
\|W\|_{\kY_\ell^s} := \max_{j=1,\ldots,s} \|W^i\|_{\kY_\ell}
\]
and re-write (\ref{eqn:rk_stage}) as
\begin{equation}\label{e.Pi}
  W  = (\id- h\a A)^{-1}(\1 U^0 + h\a B(W)),
\end{equation}
and (\ref{eqn:rk_step}) as
\begin{equation}\label{e.psi}
  \Psi(U,h)=\Psi^h(U) = \sS(hA)U+h\b^T(\id-h\a A)^{-1}B(W(U,h)),
\end{equation}
where $\sS$ is the stability function, given by
\begin{equation}
  \sS(z) = 1 + z\b^T(\id-z\a)^{-1}\1.
  \label{eqn:stability_function}
\end{equation}
In the following 
$\C^-_0=\{z\in\C:\Re(z)\leq 0\}$.  We assume
$A$-stability of the numerical method as follows (cf.~\cite{LO93}):
\begin{enumerate}[(RK1)]
\item $\sS(z)$ from (\ref{eqn:stability_function}) is   bounded with
  $|\sS(z)|\leq 1$ for all $z\in\C^-_0$.
  \label{enum:RK1}
\item $\a$ is invertible and the matrices $\id-z\a$ are invertible for all $z\in\C^-_0$.
  \label{enum:RK2}
\end{enumerate}

\begin{example}
   Gauss-Legendre collocation methods such the implicit midpoint rule satisfy (RK1) and (RK2)
  \cite[Lemma 3.6]{OW10A}.
  \label{ex:GL_methods}
\end{example}

The following result is needed later on, see also
\cite[Lemmas 3.10, 3.11, 3.13]{OW10A}:

\begin{lemma} 
\label{l.ShA}
  Under assumptions (A), (RK1) and (RK2) there are $h_*>0$,
  $\Lambda>0$  and  $\sigma>0$  such that for $h\in
  [0,h_*]$
  \begin{subequations}
    \begin{gather}
      \norm{\sS(hA)}{\kY\to\kY}\leq 1+\sigma h \label{e.ShA} \\
      \norm{(\id-h\a A)^{-1}}{\kY^s\to\kY^s}\leq \Lambda
\label{e.Lambda}.
    \end{gather}
  Moreover, for any $k\in\N_0$,  $U \in \kY_k$, $W \in \kY_k^s$,  
  \[
  h\mapsto \sS(hA)U  \in \kCb^k([0,h_*]; \kY) ,
\]
and
\[ h \mapsto (\id-h\a A)^{-1}W \in \kCb^k([0,h_*]; \kY^s), \quad h\mapsto h(\id-h\a A)^{-1}W\in \kCb^{k+1}([0,h_*]; \kY^s).
  \]
  Finally
 there are $c_{\sS,k} >0$ with 
\begin{equation}
\label{e.csk}
\sup_{h \in [0,h_*]} \|\partial_h^k \sS(hA) \|_{\kY_k\to\kY} \leq c_{\sS,k},
\end{equation}
and, with $\Lambda_k:=k! \|\a\|^{k} \Lambda^{k+1}$, we have for $k\in \N_0$,
\begin{equation}
\label{e.Lambdak}
  \|\partial^k_h( (\id-h\a A)^{-1} ) \|_{\kY^s_{k} \to \kY^s} \leq
  \Lambda_k, \quad
 \|\partial^k_h( h(\id-h\a A)^{-1} ) \|_{\kY^s_{k-1} \to \kY^s} \leq
  \Lambda_k/\|\a\|.
  \end{equation}
  \label{e.lemShA}
  \end{subequations}
\end{lemma}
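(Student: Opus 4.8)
The whole lemma rests on the fact that, by assumption (A), $A$ is normal, so the spectral theorem supplies a Borel functional calculus under which $\norm{f(A)}{\kY\to\kY}=\sup_{z\in\spec(A)}|f(z)|$ for every bounded Borel function $f$ on $\spec(A)$. The plan is to reduce each operator estimate to a scalar (respectively $s\times s$ matrix) estimate on the spectrum and then to differentiate through the calculus. First I would fix the geometry: by (A) and \eqref{eqn:semigroup_bound} we have $\spec(A)\subseteq\{\Re(z)\leq\omega\}$, hence $\spec(hA)\subseteq\{\Re(z)\leq h\omega\}$; by (RK2) the polynomial $\det(\id-z\a)$ has no zero in $\C^-_0$ and only finitely many zeros in total, so there is $h_*>0$ so small that the half-plane $H_{h_*}:=\{z\in\C:\Re(z)\leq h_*\omega\}$ contains no pole of $\sS$ nor of $z\mapsto(\id-z\a)^{-1}$. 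On $H_{h_*}$ the rational functions $\sS$, each derivative $\sS^{(k)}$, and the matrix-valued $(\id-z\a)^{-1}$ are analytic, and since $\sS$ is bounded and $(\id-z\a)^{-1}$ decays at $\infty$, all of these are bounded there.

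To establish \eqref{e.ShA} I would split $\spec(hA)$ at the imaginary axis: where $\Re(z)\leq0$ we have $|\sS(z)|\leq1$ by (RK1), while for $0<\Re(z)=:r\leq h\omega$ a mean value estimate along the horizontal segment from $z-r$ to $z$ gives $|\sS(z)|\leq|\sS(z-r)|+r\sup_{H_{h_*}}|\sS'|\leq1+\sigma h$ with $\sigma:=\omega\sup_{H_{h_*}}|\sS'|$. For \eqref{e.Lambda} I would use the identification $\kY^s\cong\C^s\otimes\kY$ under which $h\a A$ acts fibrewise, over the spectral value $\lambda$, as the matrix $h\lambda\a$; the functional calculus then yields $\norm{(\id-h\a A)^{-1}}{\kY^s\to\kY^s}=\sup_{\lambda\in\spec(A)}\norm{(\id-h\lambda\a)^{-1}}{\C^s\to\C^s}$, and since $h\lambda\in H_{h_*}$ this is bounded by $\Lambda:=\sup_{z\in H_{h_*}}\norm{(\id-z\a)^{-1}}{\C^s\to\C^s}$.

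The derivative bounds follow from the functional-calculus identities $\partial_h^k\sS(hA)=A^k\sS^{(k)}(hA)$ and $\partial_h^k(\id-h\a A)^{-1}=k!(\a A)^k(\id-h\a A)^{-(k+1)}$. Combining the first with $\norm{\sS^{(k)}(hA)}{\kY\to\kY}\leq\sup_{H_{h_*}}|\sS^{(k)}|$ and with $\norm{A^k}{\kY_k\to\kY}\leq1$ from \eqref{e.A_ell} yields \eqref{e.csk}; moving $A^k$ onto the argument in the second identity and using \eqref{e.Lambda} gives the first inequality in \eqref{e.Lambdak}. For the second inequality I would apply the Leibniz rule to $h(\id-h\a A)^{-1}$ and exploit the algebraic identity $h\a A(\id-h\a A)^{-1}=(\id-h\a A)^{-1}-\id$, valid because $\a$ is invertible by (RK2); this trades one factor of $A$ for a bounded operator, so that only $k-1$ powers of $A$ survive, matching the domain $\kY^s_{k-1}$ and producing the constant of the advertised form (after enlarging $\Lambda$ if needed). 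Finally, the membership in $\kCb^k$ follows by reading these same derivative formulas as continuous $\kY$- (respectively $\kY^s$-)valued functions of $h$ on $[0,h_*]$, the required continuity being supplied by dominated convergence in the spectral representation together with the uniform bounds just obtained.

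The main obstacle I anticipate is the second estimate in \eqref{e.Lambdak}: a naive differentiation of $h(\id-h\a A)^{-1}$ produces $k$ powers of the unbounded operator $A$, one too many for data in $\kY^s_{k-1}$, and only the cancellation encoded in $h\a A(\id-h\a A)^{-1}=(\id-h\a A)^{-1}-\id$ removes the surplus power while keeping the constant controlled. A secondary point requiring care is that, because $\omega$ may be positive, $\spec(hA)$ protrudes into the right half-plane, so neither (RK1) nor (RK2) applies directly there; this is precisely what forces the passage to the slightly larger pole-free strip $H_{h_*}$ and the $1+\sigma h$ (rather than $\leq1$) bound.
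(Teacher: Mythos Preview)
Your proposal is correct and follows essentially the same approach as the paper, which (citing \cite{OW10A}) also relies on the functional calculus for the normal operator $A$ and on the explicit derivative formulas $\partial_h^k(\id-h\a A)^{-1}=k!(\a A)^k(\id-h\a A)^{-(k+1)}$. For the second estimate in \eqref{e.Lambdak} the paper takes a slightly cleaner route than your Leibniz-plus-cancellation argument: it observes directly that $\partial_h[h(\id-h\a A)^{-1}]=(\id-h\a A)^{-2}$, whence $\partial_h^k[h(\id-h\a A)^{-1}]=\partial_h^{k-1}(\id-h\a A)^{-2}=k!(\a A)^{k-1}(\id-h\a A)^{-(k+1)}$, which yields the stated constant $\Lambda_k/\|\a\|$ exactly without any need to enlarge $\Lambda$.
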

\begin{proof}
Most of the statements follow directly from \cite[Lemmas 3.10, 3.11, 3.13]{OW10A}.
\eqref{e.Lambdak} follows from 
\[
  \partial_h^k(\id - h \a A)^{-1} 
  = k! \, (\a A)^k\, (\id - h \a A)^{-k-1} \,.
\]
and   
\[
  \partial_h^k [ h (\id - h \a A)^{-1}] = 
  \partial_h^{k-1} (\id - h \a A)^{-2} =  
  k! \, (\a A)^{k-1} \, (\id - h \a A)^{-k-1} \,,
\]
see \cite[Lemma 3.10]{OW10A}.
 \end{proof}

Analogously to Theorem \ref{t.semiflow}, we require a
well-posedness and regularity result for the stage vectors $W^i$, $i=1,\ldots, s$, and
the numerical method $\Psi^h$. The following result is an extension of  \cite[Theorem
3.14]{OW10A}  to non-integer values of $\ell$.

\begin{theorem}[Regularity of numerical method]
  Assume that the semilinear evolution equation (\ref{eqn:see})
  satisfies (\ref{enum:A}) and (B), and apply a Runge-Kutta method
  subject to conditions (RK1) and (RK2).  Let $R>0$.
 Then there is $h_*>0$ such that  there
  exist a stage vector $W$ and numerical method $\Psi$ 
which satisfy
  \begin{subequations}
\begin{equation}\label{eqn:glob-U-num_method_regularity}
    W^i(\cdot,h), \Psi(\cdot,h) \in \kCb^{N} (\kB_0^r;\kB^R_0)
  \end{equation}
 for $i=1,\ldots, s$, where 
 \begin{equation}
    \label{e.R*}
  r= r(R) = \frac{R}{2 \Lambda}.
  \end{equation}
with uniform bounds in  $h \in [0,h_*]$.
 Furthermore, for
  $\ell \in I^-$, $k\in \N_0 $, $k \leq  \ell$,
we have for $i=1,\ldots, s$,
\begin{equation}\label{eqn:glob-num_method_regularity}
    W^i(U,\cdot), \Psi(U,\cdot) \in \kCb^k( [0,h_*];\kB_0^R)
  \end{equation}
\end{subequations}
  with uniform bounds in $U \in\kB^r_\ell$. 
  The bounds on   $h_*$,  $\Psi$ and $W$   depend only on  $R$,
   \eqref{e.lemShA}, those afforded by assumption (B) on balls of radius $R$ and
 on $\a$, $\b$  as specified by the
  numerical method.
  \label{thm:num_method_regularity}
\end{theorem}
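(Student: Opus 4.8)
The plan is to mirror the proof of Theorem~\ref{t.semiflow}, replacing the mild-solution map \eqref{eqn:see_mild} by the fixed-point map defined by the stage equation \eqref{e.Pi}. Accordingly, on the Banach space $\kY^s$ I would set
\[
\Pi(W,U,h) = (\id - h\a A)^{-1}\bigl(\1 U + h\a B(W)\bigr)
\]
and seek the stage vector $W=W(U,h)$ as a fixed point $W=\Pi(W,U,h)$; the numerical method is then recovered from \eqref{e.psi}.

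First I would establish \eqref{eqn:glob-U-num_method_regularity}, the regularity in $U$ at the base level. For $U\in\kB_0^r$ with $r=R/(2\Lambda)$ as in \eqref{e.R*}, the resolvent bound \eqref{e.Lambda} gives
\[
\|\Pi(W,U,h)\|_{\kY^s} \leq \Lambda\bigl(\|U\|_{\kY} + h\|\a\|M_0[R]\bigr) \leq \tfrac R2 + \Lambda h\|\a\|M_0[R],
\]
which is $\leq R$ once $h_*$ is small enough, so $\Pi(\cdot,U,h)$ maps $\kB^R_{\kY^s}(0)$ into itself. Likewise \eqref{e.Lambda} and assumption (B) give $\|\D_W\Pi\|_{\kY^s\to\kY^s}\leq \Lambda h\|\a\|M_0'[R]$, which is $<1$ for $h\in[0,h_*]$ with $h_*$ small, so $\Pi$ is a uniform contraction in $W$. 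Since $B\in\kCb^N$ by (B), the map $\Pi$ is $\kCb^N$ jointly in $(W,U)$, and the contraction mapping theorem with parameters yields $W(\cdot,h)\in\kCb^N(\kB_0^r;\kB^R_{\kY^s}(0))$ with bounds uniform in $h$. Substituting into \eqref{e.psi} and using the stability bound \eqref{e.ShA} for $\sS(hA)$ transfers the same regularity and ball-invariance to $\Psi(\cdot,h)$, proving \eqref{eqn:glob-U-num_method_regularity}.

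For the regularity in $h$, \eqref{eqn:glob-num_method_regularity}, I would first note that since $\ell\in I^-$ the contraction argument above runs verbatim on each higher level $\kY_{\ell-j}$, $j=0,\dots,k$, so $W\in\kCb(\kB^r_{\ell-j}\times[0,h_*];\kB^R_{\kY_{\ell-j}^s}(0))$; in particular $W(U,\cdot)$ takes values in $\kY_\ell^s$ for $U\in\kB^r_\ell$. The $h$-derivatives are then obtained by implicitly differentiating $W=\Pi(W,U,h)$: since $\|\D_W\Pi\|<1$, the operator $\id-\D_W\Pi$ is invertible and $\partial_h W = (\id-\D_W\Pi)^{-1}\partial_h\Pi$, with analogous formulas for higher derivatives. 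The crucial point is that each $\partial_h$ falling on $(\id-h\a A)^{-1}$ or on $\sS(hA)$ produces an unbounded factor $A$; this is exactly controlled by the refined estimates \eqref{e.csk} and \eqref{e.Lambdak} of Lemma~\ref{l.ShA}, which bound $\partial_h^k\sS(hA)\colon\kY_k\to\kY$ and $\partial_h^k(\id-h\a A)^{-1}\colon\kY_k^s\to\kY^s$. Thus $k$ differentiations in $h$ consume exactly $k$ levels of smoothness, so the formally computed derivatives are bounded in $\kY$ provided the stages lie in $\kY_k^s$, which holds because $k\leq\ell$ and $W(U,\cdot)$ is $\kY_\ell^s$-valued. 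Feeding these bounds into \eqref{e.psi} gives $\Psi(U,\cdot)\in\kCb^k([0,h_*];\kB^R_0)$ with the asserted uniform bounds.

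The main obstacle is precisely this last bookkeeping in the $h$-regularity: one must verify that every unbounded $A$-factor generated by differentiating the resolvent or the stability function is always paired against an available power of smoothness, both from the data $U\in\kY_\ell$ and, through the chain-rule terms $\D B(W)[\partial_h W]$, from the nonlinearity acting on the intermediate scales $\kY_{\ell-j}$ --- which is where the condition $\ell\in I^-$ is used to guarantee that $B$ and its derivatives map each $\kY_{\ell-j}$ to itself so that the differentiated fixed-point relation closes up. Once the estimates \eqref{e.csk}--\eqref{e.Lambdak} are invoked with the correct smoothness indices, the remaining computation is a routine induction on $k$ via the Leibniz and Fa\`a di Bruno rules applied to \eqref{e.Pi} and \eqref{e.psi}.
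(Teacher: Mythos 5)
Your proposal is correct and follows essentially the same route as the paper: the same fixed-point map built from the stage equation \eqref{e.Pi}, the same self-mapping and contraction estimates on $\kB^R_{\kY^s}(0)$ using \eqref{e.Lambda} with $r=R/(2\Lambda)$, transfer to $\Psi$ via \eqref{e.psi}, and the same strategy for \eqref{eqn:glob-num_method_regularity} of rerunning the contraction on the scale $\kY_{\ell-j}$ (using $\ell\in I^-$) and then obtaining $h$-derivatives by implicit differentiation controlled through \eqref{e.csk} and \eqref{e.Lambdak}. The only difference is cosmetic: the paper delegates the bookkeeping of the differentiated fixed-point relation to \cite{OW10A}, whereas you spell it out slightly more explicitly.
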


\begin{proof} 
As in \cite{OW10A} we   compute $W$ as
  fixed point of the map $\Pi:\kB^R_{\kY^s}(0) \times\kB^r_\kY(0) \times [0,h_*]\to
   \kY^s$, given by
  \begin{equation}
  \label{e.Pi-W}
  \Pi(W,U,h) = (\id-h \a A)^{-1} \1 U + h \a (1-h \a A)^{-1} B(W),
  \end{equation}
  using \eqref{e.Pi}.
  To be able to apply the contraction mapping theorem 
  we need to check that $\Pi(W,U,h) \in  \kB^{R}_{\kY^s}(0)$ for $U \in \kB^{r}_\kY(0)$. 
For  such $U$  we have
  \begin{align}
    \|\Pi(W,U,h) \|_{\kY^s} &\leq
\|(\id-h \a A)^{-1}\1  U\|_{\kY^s} + h  \norm{\a}{}\Lambda M \notag\\
    &\leq  \Lambda r
    + h \norm{\a}{}\Lambda M \leq R/2 + h \Lambda \norm{\a}{}M
    \leq R \label{e.Pi-Fix}
  \end{align}
  for $h\in [0,h_*]$ and $h_*$ small enough, with $M = M_0[R]$.
So $\Pi$ maps $\kB^{R}_{\kY^s}(0)$ to itself. Furthermore there is some $c \in [0,1)$ such that
$\|\D\Pi(W,U,h)\|_{\kY^s \to \kY^s} \leq c$  for $W\in \kB^R_{\kY^s}(0)$,  $W\in \kB^r_\kY(0)$, $h\in [0,h_*]$ if  $h_*$ is small enough,
 and so $\Pi$ is a
  contraction. Hence, $W\in
  \kCb(\kB^{r}_{\kY}(0)) \times [0,h_*];\kB^{R}_{\kY^s}(0))$
with $N$ derivatives in $U$. 

  This proves statements
\eqref{eqn:glob-U-num_method_regularity} and also
\eqref{eqn:glob-num_method_regularity}  in the case $k=0$  for $W$. 
Due to \eqref{e.psi}, these statements
also hold true for $\Psi$. 
In the case $k\neq 0$ it follows from the
  that $\ell \in I^-$ that the above argument also
  holds on $\kY_{\ell-j}$, $j=0,\ldots,
  k$. Hence  there is some $h_*>0$  such that  $W^i, \Psi \in
  \kCb(\kB^r_{\ell-j}\times[0,h_*];\kB^R_{\ell-j})$, $j=0,\ldots,
  k$, $i=1,\ldots, s$. As shown in \cite{OW10A} for $U \in\kB^r_{\ell}$ the 
$h$ derivatives up to order $k$ can then be obtained by
  implicit differentiation of $\Pi(W,U,h)=W(U,h)$ with $\Pi$ defined
  above and by differentiating \eqref{e.psi},  cf.~the proof of
  Theorem \ref{t.semiflow}. This then implies
  \eqref{eqn:glob-num_method_regularity}.
\end{proof}

A discretization $y^{n+1}=\psi^h(y^n)$ of an ordinary differential equation (ODE) $\frac{\d y}{\d t}= f(y)$ is
said to be of classical order $p$ if the local error, i.e., the one-step error, of the numerical 
method  is given by the Taylor remainder of order $p+1$,
\begin{equation}\label{e.le}
  y(h)-\psi^h(y^0) = \int_0^h\frac{(h-\tau)^p}{p!}\partial_\tau^{p+1}(y(\tau)-\psi^\tau(y^0))\d\tau.
\end{equation}
When considering the local error of a semidiscretization of a  
PDE on a Hilbert space $\kY$, the derivatives of the semiflow and
numerical method in time and step size respectively are not
necessarily defined on the whole space $\kY$. To obtain global error
estimates for semidiscretizations of PDE problems analogous to the
familiar results for ODEs, we must consider the local error as a map
$\kZ\to\kY$, where $\kZ$ is a space of higher regularity.  Using the
regularity results for the semiflow and its discretization in time,
Theorems \ref{t.semiflow} and \ref{thm:num_method_regularity}, the
following can be shown (see \cite[Theorem 3.20]{OW10A}): if
(A), (B), (RK1) and (RK2) hold, and 
  (in our notation) $\ell \in I^-$, $\ell\geq p+1 $
then for fixed $T>0$, $R>0$ there exist constants
$c_1,c_2,h_*>0$ such that for every solution
$\Phi^t(U^0)$, $t\in[0,T]$ with
$\| \Phi^t(U^0)\|_{\kY_{p+1}}
 \leq R$ and every
$h\in[0,h_*]$, we have
\begin{equation}
  \label{eq:smooth_convergence}
  \norm{\Phi^{nh}(U^0)-(\Psi^h)^n(U^0)}{\kY}\leq c_1e^{c_2nh}h^p,
\end{equation}
provided that $nh\leq T$.   In
this paper we study the case where the solution $U(t)$
satisfies $U(t) \in \kY_\ell$ with $\ell<p+1$, by means of Galerkin truncation.


\section{Spectral Galerkin truncations}
\label{sec:spec}

In this section we consider the stability of the semiflow $\Phi^t$ of
\eqref{eqn:see}, and the numerical method $\Psi^h$ defined by
\eqref{eqn:rk_both} under truncation to a Galerkin subspace of
$\kY$. As before for  $m>0$ we denote by $\P_m$  the spectral projection
operator of $A$ on to the set $\spec(A)\cap \kB^{m}_{\C}(0)$, and set
$\Q_m=\id-\P_m$. 
In this setting we define $B_m(u_m)=\P_mB(u_m)$, and consider the
projected semilinear evolution equation
\begin{align}
  \frac{ \d u_m}{\d t}&= Au_m + B_m(u_m) \label{eqn:see_proj}
\end{align}
with flow map $\phi_m^t(u^0_m)=u_m(t)$ for $u_m(0)=u^0_m \in
\P_m \kY$.  Moreover we define $\Phi^t_m :=\phi^t_m\circ \P_m$.
The  Galerkin truncated semiflow has the same regularity properties as the full
semiflow (see Theorem \ref{t.semiflow})  uniformly in $m$.

\begin{lemma}[Regularity of projected semiflow]\label{l.proj-semiflow} 
  Assume   (A) and (B) and let $R>0$. Then
  there is $T_*>0$ such that for $m\geq 0$ there
  exists a projected semiflow $\Phi_m$ 
which satisfies
\begin{subequations}
  \begin{equation}
    \Phi^t_m\in \kCb^{N}(\kB_0^{R/2};\kB_0^R) 
   \label{e.PhimUDeriv}
  \end{equation}
with uniform bounds in  $t\in [0,T_*]$ and $m\geq 0$. Moreover if $\ell \in I^-$  and 
 $k\in \N_0$ satisfies
$k \leq \ell$,  then
\begin{equation}
 \Phi_m(U) \in \kCb^k([0,T_*];\kB_0^R) 
 \label{eqn:semiflowm_regt}
\end{equation}
with uniform bounds in $U \in \kB^{R/2}_\ell$ and $m\geq 0$.
\label{e.proj-semiflow_reg}
\end{subequations}
  The bounds on $T_*$ and $\Phi_m$,   depend only on $R$, 
  $\omega$ from \eqref{eqn:semigroup_bound}, and those afforded by
  assumption (B) on balls of radius $R$.
\end{lemma}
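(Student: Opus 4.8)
The plan is to mirror the proof of Theorem~\ref{t.semiflow} line by line, replacing the nonlinearity $B$ by $B_m=\P_m B$ and the datum $U$ by $\P_m U$ in the mild formulation \eqref{eqn:see_mild}. Concretely, I would realise $W_m(U,T)(t)=\Phi_m^{tT}(U)$ as the fixed point, on the scale $\kZ_\ell=\kCb([0,1];\kY_\ell)$, of
\begin{equation*}
  \Pi_m(W,U,T)=e^{tTA}\P_m U+\int_0^t e^{T(t-\tau)A}B_m(W(\tau))\,\d\tau.
\end{equation*}
Because $e^{tTA}$ and $B_m$ both preserve the Galerkin subspace $\P_m\kY$, the resulting flow stays in $\P_m\kY$, consistently with $\Phi_m^t=\phi_m^t\circ\P_m$.

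The only genuinely new point, and the sole place where uniformity in $m$ enters, is that $\P_m$ is a contraction on every space of the scale at once. Since $A$ is normal (assumption (A)), $\P_m$ commutes with $A$, hence with the spectral projections $\P,\Q$ and with $|A|^\ell$; as $\P_m$ is an orthogonal projection on $\kY$, the inner product \eqref{eqn:inner_prod} then gives $\|\P_m\|_{\kY_\ell\to\kY_\ell}\leq 1$ for all $\ell\geq 0$ and $m\geq 0$. From $\D^i B_m=\P_m\D^i B$ it follows that the bounds supplied by (B) for $B$ dominate those for $B_m$ uniformly in $m$; in particular $\sup_{U\in\kB_\ell^R}\|B_m(U)\|_{\kY_\ell}\leq M_\ell[R]$ and $\sup_{U\in\kB_\ell^R}\|\D B_m(U)\|_{\kE^1(\kY_\ell,\kY_\ell)}\leq M'_\ell[R]$. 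Consequently every estimate of Theorem~\ref{t.semiflow} — the self-mapping bound analogous to \eqref{e.ProblemTerm} and the contraction bound on $\D\Pi_m$ — holds with the same constants, yielding the same threshold $T_*$ and the same bounds on $\Phi_m$, now independent of $m$.

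With this established, the contraction mapping theorem with parameters gives $W_m\in\kCb(\kB^{R/2}_{\kY}(0)\times[0,T_*];\kB_{\kZ}^R(0))$ with $N$ derivatives in the first argument, which proves \eqref{e.PhimUDeriv} and the case $k=0$ of \eqref{eqn:semiflowm_regt}. For $1\leq k\leq\ell$ with $\ell\in I^-$ I would rerun the argument on each space $\kY_{\ell-j}$, $j=0,\ldots,k$, obtaining $\Phi_m\in\kCb(\kB_{\ell-j}^{R/2}\times[0,T_*];\kB_{\ell-j}^R)$ uniformly in $m$, and then recover the $t$-derivatives up to order $k$ by implicit differentiation of $\Pi_m(W_m(U,T),U,T)=W_m(U,T)$, exactly as in Theorem~\ref{t.semiflow}. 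Each descent step again uses only $\|\P_m\|_{\kY_{\ell-j}\to\kY_{\ell-j}}\leq 1$, so all bounds stay uniform in $m$.

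The main obstacle is thus not structural — the scheme is identical to Theorem~\ref{t.semiflow} — but lies entirely in securing the $m$-independence of every constant. The crux reduces to the elementary observation that the spectral projection $\P_m$ is a simultaneous contraction on the whole Hilbert-space scale $\{\kY_\ell\}$, which is exactly where normality of $A$ and the scale-compatible inner product \eqref{eqn:inner_prod} are used.
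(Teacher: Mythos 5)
Your proposal is correct and follows exactly the route the paper intends: the paper states this lemma without a separate proof precisely because, as you argue, the contraction-mapping argument of Theorem~\ref{t.semiflow} goes through verbatim with $B$ replaced by $B_m=\P_m B$ and $U$ by $\P_m U$, the uniformity in $m$ coming from $\|\P_m\|_{\kY_\ell\to\kY_\ell}\leq 1$ (which is the $k=0$ case of \eqref{eqn:proj_est}). Your explicit justification of that contraction property via normality of $A$ and the inner product \eqref{eqn:inner_prod} is a correct and welcome elaboration of what the paper leaves implicit.
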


  In the case $B\equiv 0$ it is clear that for $U^0 \in \kY_\ell$ 
we have the estimate
$\norm{\Phi^t(U^0)-\Phi_m^t(U^0)}{\kY} = \kO(m^{-\ell})$ on any
finite interval of existence $[0,T]$. With the presence of a nonlinear perturbation
$B\neq 0$ a similar result can be obtained by a Gronwall type argument
as shown in the lemma below, which  gives an appropriate bound for the error of the semiflow incurred in Galerkin
truncation. Note that similar  results for mixed higher order derivatives in
time and initial value are obtained, for integer $\ell$
in \cite[Theorems 2.6 and 2.8]{OW10B}.

\begin{lemma}[Projection error for the semiflow]\label{l.see_proj_err}
  Assume that the semilinear evolution equation \eqref{eqn:see}
  satisfies (A) and (B), let  
 $\ell>0$, $T>0$ and $\delta>0$. Then for all $U^0$ with
 \begin{subequations}
 \begin{equation}\label{e.PhiGalErrorCond}
    \|\Phi^t(U^0)\|_{\kY_\ell} \leq R, ~~t \in [0,T]
  \end{equation}
  there is $m_*\geq 0$ such that for $m\geq m_*$ we have
  $\Phi_m^t(U^0) \in \kB^{R+\delta}_0$ for $t\in [0,T]$, and
  \begin{equation}\label{e.globPhiGalError}
    \| \Phi^t(U^0)-\Phi^t_m(U^0) \|_\kY = m^{-\ell}R  \e^{(\omega + M')t}= \kO(m^{-\ell})
  \end{equation}
  for $m\geq m_*$ and
$t\in [0,T]$, where $M'=M'_0[R+\delta]$.
\end{subequations}
  Here $m_* $ and the order constant  depend only on $\delta$, $R$, $T$,
  \eqref{eqn:semigroup_bound} and the bounds afforded by
 (B) on balls of radius $R+\delta$.
\end{lemma}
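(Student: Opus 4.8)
The plan is to compare the full solution $U(t)=\Phi^t(U^0)$ with the truncated one $u_m(t)=\Phi^t_m(U^0)=\phi^t_m(\P_m U^0)$ by splitting the error $U(t)-u_m(t)$ into its high- and low-frequency parts. Since $u_m(t)$ lies in $\range\P_m$, we have $\Q_m(U-u_m)=\Q_m U$, and \eqref{eqn:proj_est} together with \eqref{e.PhiGalErrorCond} controls this part directly: $\norm{\Q_m U(t)}{\kY}\leq m^{-\ell}\norm{U(t)}{\kY_\ell}\leq m^{-\ell}R$ (note also that the inner product \eqref{eqn:inner_prod} gives $\norm{\cdot}{\kY}\leq\norm{\cdot}{\kY_\ell}$, so \eqref{e.PhiGalErrorCond} forces $\norm{U(t)}{\kY}\leq R$). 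It then remains to estimate the low-frequency part $w(t):=\P_m U(t)-u_m(t)$. Applying $\P_m$ to \eqref{eqn:see} and subtracting \eqref{eqn:see_proj} shows that $w$ solves $\dot w=Aw+\P_m[B(U)-B(u_m)]$ with $w(0)=0$. The crucial structural point is that $w(t)\in\range\P_m$, on which $A$ is a \emph{bounded} normal operator; hence $w$ is a genuine $\kC^1$ solution and I can run an energy estimate directly, with no need for $U(t)\in D(A)$.

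For the energy estimate I would compute $\tfrac12\tfrac{\d}{\d t}\norm{w}{\kY}^2=\Re\ip{w,Aw}{\kY}+\Re\ip{w,\P_m[B(U)-B(u_m)]}{\kY}$. Normality of $A$ and $\Re(\spec A)\leq\omega$ give, via the spectral theorem, $\Re\ip{w,Aw}{\kY}\leq\omega\norm{w}{\kY}^2$, while $\norm{\P_m}{\kY\to\kY}\leq 1$ and the Lipschitz bound $M'=M'_0[R+\delta]$ for $B$ on $\kB^{R+\delta}_0$ give $\norm{\P_m[B(U)-B(u_m)]}{\kY}\leq M'\norm{U-u_m}{\kY}\leq M'(m^{-\ell}R+\norm{w}{\kY})$, using $U-u_m=\Q_m U+w$. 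This yields the scalar differential inequality $\tfrac{\d}{\d t}\norm{w}{\kY}\leq(\omega+M')\norm{w}{\kY}+M'm^{-\ell}R$, and Gronwall's lemma with $w(0)=0$ produces a bound $\norm{w(t)}{\kY}\leq\tfrac{M'}{\omega+M'}m^{-\ell}R\,(\e^{(\omega+M')t}-1)$; combining this with the high-frequency estimate and organizing the constants gives $\norm{U(t)-u_m(t)}{\kY}\leq m^{-\ell}R\,\e^{(\omega+M')t}=\kO(m^{-\ell})$, which is the asserted bound.

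The main obstacle is that this argument is circular as stated: the Lipschitz constant $M'=M'_0[R+\delta]$ is only available while $u_m(t)$ remains in $\kB^{R+\delta}_0$, yet that containment is itself a consequence of the error estimate. I would resolve this by a continuity (bootstrap) argument together with the continuation theory for the ODE on $\range\P_m$ (where $A$ is bounded and $B_m=\P_m B$ is $\kCb^1$ on balls by (B) with $\ell=0$). Define $\tau_m:=\sup\{t\in[0,T]: u_m \text{ exists on }[0,t]\text{ and }\norm{u_m(s)}{\kY}\leq R+\delta\ \forall s\leq t\}$; since $\norm{u_m(0)}{\kY}=\norm{\P_m U^0}{\kY}\leq R<R+\delta$, local existence gives $\tau_m>0$. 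On $[0,\tau_m)$ the estimate above applies, whence $\norm{u_m(t)}{\kY}\leq\norm{U(t)}{\kY}+\norm{U(t)-u_m(t)}{\kY}\leq R+m^{-\ell}R\,\e^{(\omega+M')T}$. Choosing $m_*$ so large that $m_*^{-\ell}R\,\e^{(\omega+M')T}<\delta$ makes this bound strictly below $R+\delta$ for all $m\geq m_*$, so $u_m$ cannot reach the boundary of $\kB^{R+\delta}_0$; by continuation it then exists on all of $[0,T]$ and $\tau_m=T$. This simultaneously establishes the containment $\Phi^t_m(U^0)\in\kB^{R+\delta}_0$ and validates the error bound on the full interval $[0,T]$, with $m_*$ and the order constant depending only on the quantities listed in the statement.
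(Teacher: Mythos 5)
Your proof is correct, and it reaches the estimate by a genuinely different mechanism than the paper, although the overall skeleton coincides. Both arguments split the error into the high-frequency part $\Q_m\Phi^t(U^0)$, bounded by $m^{-\ell}R$ via \eqref{eqn:proj_est}, and a low-frequency part, then use the Lipschitz constant $M'=M'_0[R+\delta]$ and Gronwall, with a smallness condition on $m^{-\ell}$ to keep $\Phi^t_m(U^0)$ inside $\kB_0^{R+\delta}$. The difference is in how the low-frequency part is estimated: the paper stays entirely with the mild (variation-of-constants) formulation \eqref{eqn:see_mild} for both $\Phi^t$ and $\Phi^t_m$ and applies an \emph{integral} Gronwall inequality with kernel $\e^{\omega(t-\tau)}$, which never requires differentiating either solution; you instead differentiate $\norm{w}{\kY}^2$ for $w=\P_m\Phi^t(U^0)-\Phi^t_m(U^0)$, using that $A$ restricted to $\range\P_m$ is a \emph{bounded} normal operator, so that $w$ is a classical $\kC^1$ solution and $\Re\ip{w,Aw}{\kY}\leq\omega\norm[2]{w}{\kY}$ by the spectral theorem. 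Your route needs normality in an essential way (the paper's integral argument needs only the semigroup bound \eqref{eqn:semigroup_bound}, so it would survive in a more general setting), and it needs the extra observation -- which you correctly supply -- that $\P_m\Phi^t(U^0)$ is differentiable because the projected equation has bounded linear part and continuous inhomogeneity; what it buys is a completely elementary ODE-level computation. A further point in your favour: the paper disposes of the circularity between the containment $\Phi_m^\tau(U^0)\in\kB_0^{R+\delta}$ and the error bound with the terse instruction to ``choose $m_*$ big enough'' that \eqref{e.phimphiDelta} holds (deferring the details to the cited reference), whereas your explicit continuation/bootstrap via $\tau_m$ makes this step airtight. One shared cosmetic caveat: the exact constant $R\,\e^{(\omega+M')t}$ emerges from either Gronwall computation only after replacing $\omega$ by $\max(\omega,0)$ in \eqref{eqn:semigroup_bound} (which is harmless, since the bound there persists under enlarging $\omega$); the substantive conclusion $\kO(m^{-\ell})$ is unaffected in both proofs.
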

\begin{proof}
  The statement is
  shown for integer $\ell$ in \cite{OW10B}.  We review the argument, which  also works for
  arbitrary $\ell \in I$. To prove
  \eqref{e.globPhiGalError} we use the mild formulation
  \eqref{eqn:see_mild} for $\Phi$ and $\Phi_m$.  We find
  \begin{align*}
    \norm{\Phi^t(U^0)-\Phi_m^t(U^0)}{\kY} &\leq \norm{ \Q_m\Phi^t(U^0)}{\kY} \\
& +
    \norm{ \int_0^te^{(t-\tau)A}(\P_m B(\Phi^\tau(U^0))-\P_mB(\Phi^\tau_m(U^0)))\d \tau
    }{\kY}
    \\
    &\leq     m^{-\ell} R  +   \int_0^{t} \e^{\omega(t-\tau)} \|B(\Phi^\tau(U^0))-B(\Phi^\tau_m(U^0))\|_\kY \d \tau
    \\
    &\leq  m^{-\ell}R+  M'
    \int_0^{t}   \e^{\omega(t-\tau)}\norm{\Phi^\tau(U^0)-\Phi_m^\tau(U^0)}{\kY}\d \tau,
  \end{align*}
  where $M' = M'_0[R+\delta]$ a
  bound of $\D B$ as map from $\kB_0^{R+\delta}$ 
 to  $\kE(\kY)$, see condition (B), and we choose $m_*>0$ big enough such that
 \begin{equation}
 \label{e.phimphiDelta}
 \norm{\Phi^\tau(U^0)-\Phi_m^\tau(U^0)}{\kY} \leq \delta \quad\mbox{for}\quad \tau \in [0,T].
 \end{equation} Thus, applying a Gronwall
  type argument, we obtain \eqref{e.globPhiGalError}.
\end{proof}

We also consider an $s$-stage Runge-Kutta method applied to the
projected semilinear evolution equation \eqref{eqn:see_proj}. We
denote by $w_m=w_m(u^0_m,h)$ the stage vector of this map, and by
$\psi^h_m(u_m^0)$ the one-step numerical method applied to the
projected system \eqref{eqn:see_proj} and define $W_m=w_m\circ\P_m$,
$\Psi^h_m=\psi^h_m\circ\P_m$. Similar to Lemma \ref{l.proj-semiflow} and Lemma 
\ref{l.see_proj_err}, we have the following results regarding the
existence, regularity and error under truncation for the projected
numerical method. 
Note that  similar results have been obtained, for integer $\ell$, and mixed derivatives
in \cite[Theorems  3.2 and 3.6]{OW10B}.

\begin{lemma}[Regularity of projected numerical method
and projection error]
\label{l.reg-proj-num-method}
  Assume that the semilinear evolution equation (\ref{eqn:see})
  satisfies (A) and (B), and apply a Runge-Kutta method
  subject to conditions (RK1) and (RK2).  Let $R>0$.
 Then there is $h_*>0 $ such that for $m\geq 0$  there
  exist a stage vector $W_m$ and numerical method $\Psi_m$ of the projected
system \eqref{eqn:see_proj} 
which satisfy
  \begin{subequations}
\begin{equation}\label{eqn:U-proj-num_method_regularity}
    W^i_m(\cdot,h), \Psi_m(\cdot,h) \in \kCb^{N} (\kB^{r}_0;\kB^R_0)
  \end{equation}
for $i=1,\ldots, s$, where $r$ is as in \eqref{e.R*},
with uniform bounds in  $h \in [0,h_*]$, $m\geq 0$.
 Furthermore, for
  $\ell \in I^-$,  $k\in \N_0 $, $k \leq \ell$,
we have for $i=1,\ldots, s$,
\begin{equation}\label{eqn:h-proj-num_method_regularity}
    W^i_m(U,\cdot), \Psi_m(U,\cdot) \in \kCb^k( [0,h_*];\kB^R_0)
  \end{equation}
with uniform bounds in $U \in\kB^r_\ell$, $m\geq 0$. 
Finally,  if  $\ell \in \kI$, $\ell>0$,  then for
$m\geq 0$ we get 
  \begin{equation}\label{e.wwm}
    \sup_{ \substack{  U\in\kB^r_\ell
        \\ h\in[0,h_*]  }    }
    \norm{W(U,h)-W_m(U,h)}{\kY^s}=\kO(m^{-\ell})
  \end{equation}
and
 \begin{equation}\label{e.psipsim}
    \sup_{ \substack{  U\in \kB^r_\ell
        \\ h\in[0,h_*]  }    }
    \norm{\Psi(U,h)-\Psi_m(U,h)}{\kY}=\kO(m^{-\ell}).
  \end{equation}
\end{subequations}
 The bounds on   $h_*$,  $\Psi_m$ and $W_m$   and the order constants 
depend only on $R$, 
   \eqref{e.lemShA}, those afforded by assumption (B) on balls of radius $R$ and
 on $\a$, $\b$  as specified by the
  numerical method.
\end{lemma}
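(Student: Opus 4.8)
The plan is to establish the four conclusions of Lemma~\ref{l.reg-proj-num-method} in two stages: first the regularity statements \eqref{eqn:U-proj-num_method_regularity} and \eqref{eqn:h-proj-num_method_regularity}, then the projection error bounds \eqref{e.wwm} and \eqref{e.psipsim}. For the regularity part I would observe that the projected system \eqref{eqn:see_proj} has the same structure as \eqref{eqn:see}, with linear part $A$ (acting on $\P_m\kY$) and nonlinearity $B_m=\P_m B$. Since $\|\P_m\|_{\kY_\ell\to\kY_\ell}\leq 1$ for every $\ell\geq 0$ (the projection is orthogonal in each norm \eqref{eqn:inner_prod}), the nonlinearity $B_m$ satisfies condition (B) with exactly the same bounds $M_\ell[R]$, $M'_\ell[R]$ as $B$, uniformly in $m$. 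The stability estimates \eqref{e.ShA}, \eqref{e.Lambda} of Lemma~\ref{l.ShA} and the higher-derivative bounds \eqref{e.csk}, \eqref{e.Lambdak} hold verbatim for the operator $A$ restricted to $\P_m\kY$, again uniformly in $m$, because the stability function $\sS$ and the resolvent $(\id-h\a A)^{-1}$ commute with $\P_m$. Therefore the entire proof of Theorem~\ref{thm:num_method_regularity} goes through word-for-word with $B$ replaced by $B_m$, via the contraction mapping argument applied to $\Pi$ in \eqref{e.Pi-W}, yielding \eqref{eqn:U-proj-num_method_regularity} and \eqref{eqn:h-proj-num_method_regularity} with bounds independent of $m$.

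For the projection error \eqref{e.wwm}, I would compare the fixed-point equation \eqref{e.Pi-W} for $W(U,h)$ with the corresponding equation for $W_m(U,h)$. Applying $\P_m$ to the equation for $W$ and using $\P_m(\id-h\a A)^{-1}=(\id-h\a A)^{-1}\P_m$, I get a fixed-point equation for $\P_m W$ of the same form as that for $W_m$ but with nonlinearity $\P_m B$ evaluated at $W$ rather than at $W_m$. Subtracting and isolating, I would write
\begin{align*}
W-W_m &= \Q_m W + (\id-h\a A)^{-1}h\a\bigl(\P_m B(W)-\P_m B(W_m)\bigr).
\end{align*}
The term $\Q_m W$ is bounded by $m^{-\ell}\|W\|_{\kY_\ell^s}=\kO(m^{-\ell})$ using the third estimate in \eqref{eqn:proj_est} together with the regularity \eqref{eqn:U-proj-num_method_regularity} applied on $\kY_\ell$ (which gives a uniform $\kY_\ell$ bound on $W$ since $U\in\kB^r_\ell$ and $\ell\in I$). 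The remaining term is bounded using \eqref{e.Lambda}, $\|\a\|$, and the Lipschitz bound $M'_0[R]$ on $B$, giving a factor $h\Lambda\|\a\|M'\|W-W_m\|_{\kY^s}$. For $h_*$ small enough this coefficient is strictly less than one, so it can be absorbed into the left-hand side, leaving $\|W-W_m\|_{\kY^s}=\kO(m^{-\ell})$ uniformly in $U\in\kB^r_\ell$ and $h\in[0,h_*]$. The bound \eqref{e.psipsim} then follows by inserting this into the defining formula \eqref{e.psi} for $\Psi$ and $\Psi_m$: the difference $\Psi-\Psi_m$ splits into a linear term $\Q_m\sS(hA)U=\kO(m^{-\ell})$ and a term controlled by $\|W-W_m\|_{\kY^s}$ via \eqref{e.Lambda} and the Lipschitz bound on $B$.

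The main obstacle is the absorption step in the estimate for \eqref{e.wwm}: one must verify that the contraction constant governing the $B(W)-B(W_m)$ term is uniform in $m$, and in particular that the $\kY_\ell$ bound on $W$ needed to control $\Q_m W$ is available uniformly in $m$. This is precisely where the hypothesis $\ell\in I$ (so that $B$ maps $\kB^R_\ell$ to $\kY_\ell$ and the regularity theorem applies on the $\kY_\ell$ scale) is essential, rather than merely $\ell\in I^-$; one needs $U\in\kB^r_\ell$ to propagate to a uniform $\kY_\ell^s$ bound on the stage vector. Since the statement claims the error bounds only under $\ell\in I$, $\ell>0$, while the regularity of $t$- or $h$-derivatives requires the stronger $\ell\in I^-$, I would keep these two hypotheses carefully separated throughout, noting that \eqref{e.wwm} and \eqref{e.psipsim} use only the zeroth-order (in $h$) regularity on the $\kY_\ell$ scale.
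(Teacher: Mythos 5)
Your proposal is correct and follows the same overall strategy as the paper's proof: the regularity statements \eqref{eqn:U-proj-num_method_regularity}, \eqref{eqn:h-proj-num_method_regularity} are obtained by rerunning the proof of Theorem~\ref{thm:num_method_regularity} for the projected system (uniformly in $m$, since by \eqref{eqn:proj_est} $\P_m$ has norm at most $1$ on every $\kY_\ell$ and commutes with $\sS(hA)$ and $(\id-h\a A)^{-1}$), and the error bound \eqref{e.wwm} comes from subtracting the two fixed-point equations \eqref{e.Pi} and absorbing the Lipschitz term $h\Lambda\norm{\a}{}M'\norm{W-W_m}{\kY^s}$ for $h_*$ small. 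The one genuine difference is in the decomposition: you bound the tail $\Q_m W$ in a single stroke via the $\kY_\ell^s$-regularity of the stage vector, whereas the paper pushes $\Q_m$ through the fixed-point formula and bounds $\Q_m U$ and $\Q_m B(W)$ separately (by $m^{-\ell}\norm{U}{\kY_\ell}$ and $m^{-\ell}M_\ell[R]$); both routes hinge on the same key ingredient, the uniform $\kY_\ell^s$ bound on $W$ for $U\in\kB^r_\ell$ with $\ell\in I$, which you correctly identify as the reason the error bounds need $\ell\in I$ rather than $\ell\in I^-$. One small repair is needed in your $\Psi$ estimate: with the splitting you describe, the difference is
\begin{equation*}
\Psi^h(U)-\Psi^h_m(U)=\sS(hA)\Q_m U + h\b^T(\id-h\a A)^{-1}\Q_m B(W) + h\b^T(\id-h\a A)^{-1}\P_m\bigl(B(W)-B(W_m)\bigr),
\end{equation*}
so a third term involving $\Q_m B(W)$ survives which is \emph{not} controlled by $\norm{W-W_m}{\kY^s}$; it is $\kO(m^{-\ell})$ by the same device you already used for the stage vectors, namely $\norm{\Q_m B(W)}{\kY^s}\leq m^{-\ell}M_\ell[R]$ (this is exactly how the paper handles it), or it disappears entirely if you instead write $\Psi-\Psi_m=\Q_m\Psi+(\P_m\Psi-\Psi_m)$ in exact analogy with your stage-vector decomposition and bound $\Q_m\Psi$ using the $\kY_\ell$-regularity of $\Psi$. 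This is a bookkeeping slip, not a gap, since the needed ingredient is already present in your argument.
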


\begin{proof}
  The statements \eqref{eqn:U-proj-num_method_regularity} 
and \eqref{eqn:h-proj-num_method_regularity} are shown exactly as 
in the proof of Theorem \ref{thm:num_method_regularity} 
and   \eqref{e.wwm}, \eqref{e.psipsim} are
  shown for integer $\ell$ in \cite{OW10B}. The same arguments are valid for arbitrary
  $\ell \in I$ as well, we review the proof for completeness. 
From  the formulation (\ref{e.Pi}) of the stage vectors $W^i,W^i_m$,
  $i=1,\ldots,s$,  
we   find 
  \begin{align}
    \norm{W(U,h)-W_m(U,h)}{\kY^s} &\leq
    \norm{(\id-h\a A)^{-1}}{\kY^s\to\kY^s}\norm{\Q_mU}{\kY} \notag\\
 & \quad + 
\norm{h \a (\id-h\a A)^{-1} \Q_mB(W)}{\kY^s}  \notag\\
    & \quad 
+h\norm{(\id-h\a A)^{-1}}{\kY^s\to\kY^s}\norm{\a}{}\norm{\P_m(B(W)-B(W_m)))}{\kY^s}  \notag\\
    &\leq \Lambda \norm{\Q_m U}{\kY}  + h \|\a\| \Lambda m^{-\ell}M_\ell[R]
     \notag \\
& \quad+ h \Lambda \norm{\a}{} \norm{\P_m B(W(U,h))-\P_m B(W_m(U,h))}{\kY^s}
 \notag   \\
    & \leq \Lambda
    \norm{U}{\kY_\ell}m^{-\ell}+  h \|\a\| \Lambda m^{-\ell}M_\ell[R]
   \notag  \\
    & \qquad+ h \Lambda \norm{\a}{} M' \norm{W(U,h)-W_m(U,h)}{\kY^s} \label{e.WWm-est}
  \end{align}
with an order constant uniform in $U\in \kB^r_{\ell}$. Here $M' = M'_0[R]$ and  
 we used \eqref{e.Lambda}  and  \eqref{eqn:proj_est}.
 Solving for
  $\norm{W(U,h)-W_m(U,h)}{\kY^s}$ and taking the supremum over
  $h\in[0,h_*]$ and $U \in \kB^r_{\ell}$ 
we get \eqref{e.wwm}.

Similarly for the numerical method using \eqref{e.psi},  \eqref{e.lemShA} and \eqref{eqn:proj_est} we estimate 
  \begin{align}
   \norm{\Psi^h(U)-\Psi^h_m(U)}{\kY} & \leq  
 \norm{\sS(hA) }{\kY\to\kY}\norm{\Q_mU}{\kY}  +   \| \Q_m \b h(\id - h \a A)^{-1} B(W)\|_{\kY^s} \notag \\
& \quad  
+ h \|\b\| \Lambda\norm{\P_m(B(W)- B(W_m))}{\kY^s} \notag \\
   & \leq   (1+\sigma h)\norm{U}{\kY_\ell}m^{-\ell}  + 
s\|\b\| h \Lambda m^{-\ell} M_\ell[R] \notag   \\
&\quad + s h\norm{\b}{}\Lambda M'\norm{W(U)-W_m(U)}{\kY^s}
 \notag \\
   & \leq   (1+\sigma h)\norm{U}{\kY_\ell}m^{-\ell}  +s \|\b\| h \Lambda m^{-\ell} M_\ell[R]
\notag   \\
&\quad + s h\norm{\b}{}\Lambda M' \kO(m^{-\ell}).
\label{e.psipsimHelp}
  \end{align}
  Here we used \eqref{e.wwm} in the last line.
\end{proof}


\section{Trajectory error bounds for non-smooth data}
\label{sec:traj}
 
In this section we consider the convergence of the global error
\begin{equation}
  E^n(U,h)=\norm{\Phi^{nh}(U)-(\Psi^h)^n(U)}{\kY}
  \label{eqn:ge}
\end{equation}
as $h\to 0$ for non-smooth initial data.  As
mentioned above, cf. \eqref{eq:smooth_convergence}, \cite[Theorem
3.20]{OW10A} states that we have $E^n(U^0,h)=\kO(h^p)$ in some
interval $[0,T]$, $0\leq nh\leq T$, 
given sufficient regularity of the semiflow and time
semidiscretization to bound the local error given by the Taylor
expansion to order $p+1$ as a map
\begin{equation}
  U\mapsto\norm{\int_0^h\frac{(h-\tau)^p}{p!}\partial_\tau^{p+1}(\Phi^\tau(U^0)-\Psi^\tau(U^0))\ d\tau}{\kY},
  \label{eqn:le}
\end{equation}
see \eqref{e.le}.  As stated by Theorems \ref{t.semiflow} and
\ref{thm:num_method_regularity}, this is the case provided $\ell \in I^-$, 
 $\ell\geq
p+1$.  In this paper we study the order $q=q(\ell)$ of convergence of
the global error for non-smooth initial data $U^0 \in \kY_\ell$, $\ell
\in I^-$, $\ell<p+1$, such that $E^n(U,h)=\kO(h^q)$ and show that we
obtain $q(\ell) = p\ell/(\ell+1)$ as Brenner and Thom\'ee \cite{BT79}
and Kov\'acs \cite{K07} 
did for  linear  strongly continuous semigroups.

The implicit midpoint rule, the simplest Gauss-Legendre method,
satisfies the conditions (RK1) and (RK2), see Example
\ref{ex:GL_methods} with $p=2$.  Figure \ref{fig:impr_swe_2} shows the order of
convergence of the implicit midpoint rule applied to the semilinear
wave equation (\ref{e.swe}) with $V'(u)=u-4u^2$ for   $\ell=j/2$, $j=0,\ldots, 6$, on the integration
interval $t \in [0,0.5]$,
using a fine spatial mesh (we use $N=1000$ grid points on $[0, 2\pi]$). 
As initial values we choose $U^0= (u^0, v^0) \in \kY_\ell$ where
\[
u^0(x) =
 \sum_{k=0}^{N=1} \frac{c_u}{k^{\ell + 1/2+ \epsilon}} (\cos kx
+   \sin kx),
\quad
v^0(x)  =
 \sum_{k=0}^{N=1} \frac{c_v}{k^{\ell + 1/2+ \epsilon}} (\cos kx
+ \sin kx).
\]
Here $c_u$ and $c_v$ are such that $\|U^0\|_{\kY_\ell}= 1$, with $U^0 = (u^0, v^0)$, 
and  $\epsilon = 10^{-8}$. From Theorem \ref{t.semiflow}, with $\kY$ replaced by $\kY_\ell$, we know
that there is some $T_*>0$ such that $\Phi^t(U^0) \in \kB^R_\ell$  for $U^0 \in \kY_\ell$ so that the
assumption \eqref{e.condPhiGlobal} of our convergence  result, Theorem \ref{thm:ns_convergence} below, is satisfied. 
We   integrate the semilinear wave equation with the above initial data for the time steps  $h = 0.1, 0.095, 0.09, 0.085,\ldots, 0.05$,  when $\ell>0$. At $\ell=0$, to reduce computational effort, we only used   the time steps  $h = 0.1,  0.09, \ldots 0.05$.
To estimate the trajectory error,  we compare the numerical solution  to a solution
calculated using a much smaller time step, $\tilde h = 10^{-3}$ for $\ell>0$ and $\tilde h = 10^{-4}$ for $\ell=0$.  
From the assumption 
$E_n(h) = c h^{q}$
we get $\log E_n(h) = \log c + q \log h$. Fitting a line to those data, we take the gradient
of the line as our estimated order of convergence of the trajectory error.
The decay in $q(\ell)$ as $\ell$ decreases
from $3$ is clearly shown. Note that the order of convergence does not decrease to exactly $0$ at $\ell=0$ and is slightly better than predicted by our theory when $\ell=2.5$. This is  because we simulate a space-time discretization rather than a time semidiscretization. Moreover at $\ell=0$, despite the fact that we already use a finer time  step size, the approximation of the exact solution is not  that accurate as the order of convergence for the time-semidiscretization vanishes at $\ell=0$. 

\begin{figure}[h]
  \begin{center}
  \vspace*{-6cm}
  \includegraphics[scale=0.6]{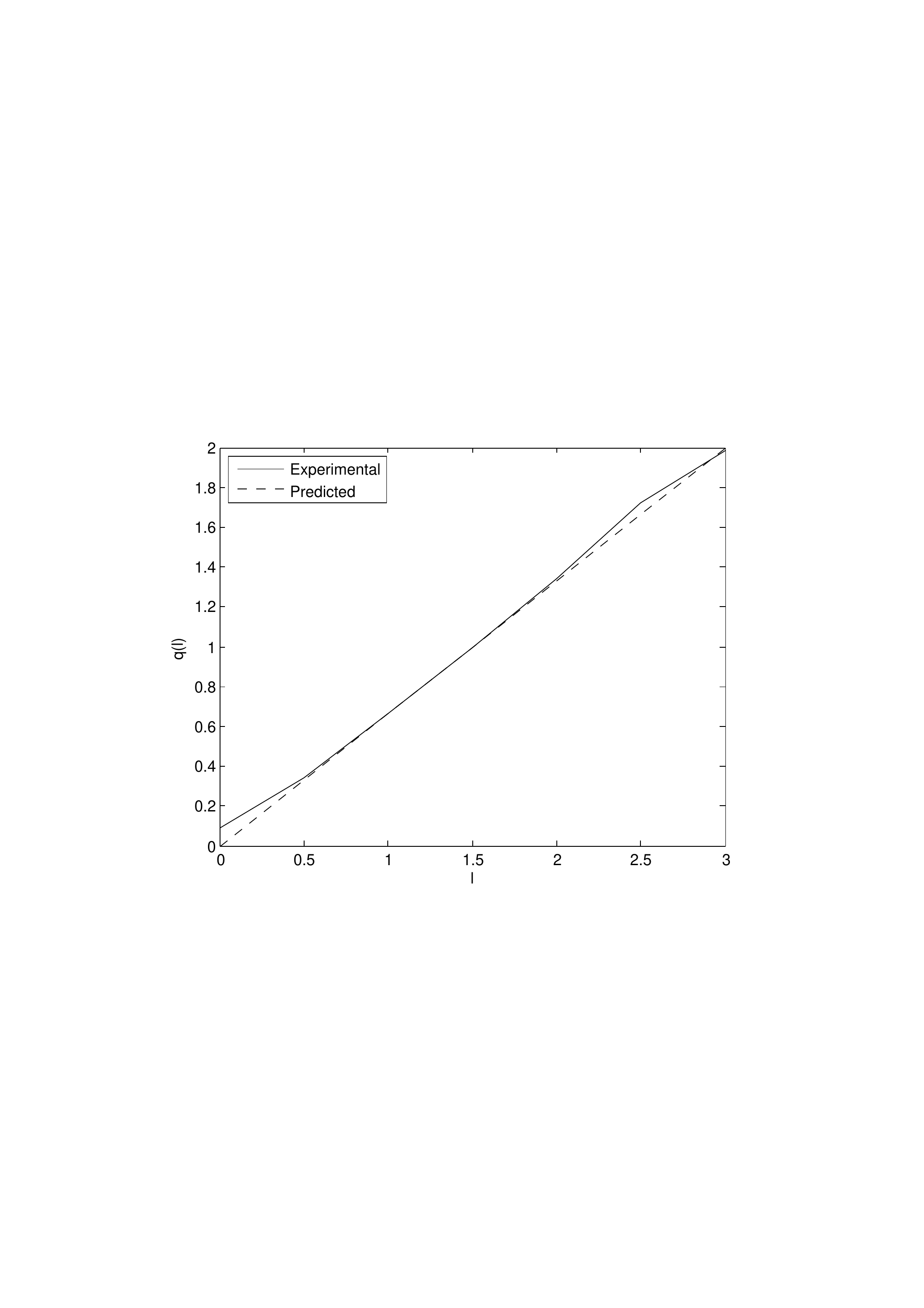}
  \end{center}
    \vspace*{-6cm}
  \caption{Plot of a numerical estimate of $q(\ell)$ against $\ell$
    for the implicit midpoint rule applied to the semilinear wave
    equation, with the prediction of Theorem~\ref{thm:ns_convergence}
    for comparison.}
  \label{fig:impr_swe_2}
\end{figure}

In the rest of this section, equipped with the results of Section \ref{sec:spec} on the
stability of the semiflow and the numerical method under Galerkin,
truncation we  estimate the growth with $m$ of the local
error of a Runge-Kutta method (\ref{eqn:rk_both}), subject to (RK1)
and (RK2), applied to the projected equation (\ref{eqn:see_proj})
subject to (\ref{enum:A}) and (B) for non-smooth initial data. In this
setting, by coupling $m$ and $h$ and balancing the projection error and 
trajectory error of the projected system, we   obtain an estimate
for $q(\ell)$ that describes the convergence of the numerical method for the
semilinear evolution equation \eqref{eqn:see} as observed in Figure
\ref{fig:impr_swe_2}, see Section \ref{ss.err_nonsmooth}.


\subsection{Preliminaries}\label{ss.Prelim}
We start with some preliminary lemmas.

\begin{lemma}[$m$-dependent bounds for derivatives of $\Phi_m$]\label{l.phimDerivs}
Assume that the semilinear evolution equation \eqref{eqn:see}
  satisfies (A) and (B) and 
choose 
$\ell\in I^-$, $T>0$, $m_*\geq 0$ and $R>0$.
 Then  for all $U^0$ with 
  \begin{subequations}
\begin{equation}\label{e.glob-cond-proj_flow_reg}
  \Phi^t_m(U^0) \in \kB_\ell^R\quad\mbox{for}\quad t\in [0,T],~~m\geq m_*,
  \end{equation}
  and for all $k\in\N_0$, $k\leq  \ell$ we have
  \begin{equation}
    \Phi_m(U^0)  \in \kC_b^{k}( [0,T];\kB_0^R) 
    \label{eqn:globproj_flow_reg}
  \end{equation}
  with bounds uniform in $U^0$ and $m\geq m_*$. 
Further,  choose $k\in \N_0$ with
    $\ell\leq k \leq N$. Then for all $U^0$ satisfying
  \eqref{e.glob-cond-proj_flow_reg},
\eqref{eqn:globproj_flow_reg} still holds, but 
  with $m$-dependent bounds which are uniform in $U^0$. Moreover for all such
  $U^0$, $\ell\leq k \leq N$,
  \begin{equation}
    \norm{\partial_t^k\Phi^t_m(U^0)}{\kCb([0,T];\kY )}=
    \kO(m^{k-\ell}), 
\label{eqn:proj_flow_m_dep}
  \end{equation}
\end{subequations}
  with bounds uniform in $U^0$.   The bounds and order
constants only depend on $T$,    $R$, \eqref{eqn:semigroup_bound}
and the bounds from assumption (B).
\end{lemma}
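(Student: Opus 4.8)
Write $u_m(t)=\Phi_m^t(U^0)$ and $v_j:=\partial_t^j u_m$, and set $(x)_+:=\max(x,0)$, $\theta:=\ell-\lfloor\ell\rfloor$. For fixed $m$ the projected equation \eqref{eqn:see_proj} is an ODE on $\P_m\kY$ whose vector field $u\mapsto Au+\P_m B(u)$ is $\kCb^{N}$ (assumption (B) with $\ell=0\in I$, and $A$ acts boundedly on $\P_m\kY$); by \eqref{e.glob-cond-proj_flow_reg} the solution exists on all of $[0,T]$, so each $v_j$, $0\leq j\leq N$, is continuous with a (possibly $m$-dependent) finite bound. This already gives the $m$-dependent form of \eqref{eqn:globproj_flow_reg}; the content of the lemma is to make these bounds uniform for $k\leq\ell$ and of size $\kO(m^{k-\ell})$ for $k>\ell$. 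Differentiating \eqref{eqn:see_proj} and applying the Fa\`a di Bruno formula yields, for $j\geq1$,
\[
v_j = A v_{j-1} + \sum \P_m\,\D^i B(u_m)[v_{a_1},\ldots,v_{a_i}],
\]
the sum being finite over $i\geq1$ and $a_1+\cdots+a_i=j-1$ with $a_s\geq1$ (plus the term $\P_m B(u_m)$ when $j=1$).

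First I would show by induction on $j=0,1,\ldots,\lfloor\ell\rfloor$ that $v_j\in\kCb([0,T];\kY_{\ell-j})$ with bounds uniform in $U^0$ and $m\geq m_*$. The decisive structural fact is that every index $\ell-j$ appearing here lies in $I$: since $\ell\in I^-$, \eqref{e.I} gives $\ell-j\in I$ for $0\leq j\leq\lfloor\ell\rfloor$. In the inductive step the linear term $Av_{j-1}$ lies in $\kY_{\ell-j}$ because $A$ maps $\kY_{\ell-j+1}$ boundedly into $\kY_{\ell-j}$, while each Fa\`a di Bruno term is bounded in $\kY_{\ell-j}$ using that $\D^i B$ is bounded on $\kY_{\ell-j}$ by (B), together with the embeddings $\kY_{\ell-a_s}\hookrightarrow\kY_{\ell-j}$. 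The base case $j=0$ is exactly \eqref{e.glob-cond-proj_flow_reg}. Reading off the case $j=k\leq\ell$ and using $\kY_{\ell-k}\hookrightarrow\kY$ proves \eqref{eqn:globproj_flow_reg} with uniform bounds, and the endpoint $j=\lfloor\ell\rfloor$ provides the seed $v_{\lfloor\ell\rfloor}\in\kCb([0,T];\kY_\theta)$, uniformly in $m$, with $\theta\in[0,1)\cap I$.

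Next I would prove \eqref{eqn:proj_flow_m_dep} by a second induction on $j=\lfloor\ell\rfloor,\ldots,N$, now measuring in $\kY=\kY_0$, with the claim $\norm{v_j}{\kCb([0,T];\kY)}=\kO(m^{(j-\ell)_+})$; the base case $j=\lfloor\ell\rfloor$ is the seed. For the nonlinear terms it suffices to work in $\kY$, so that only $B\in\kCb^{N}(\kB^R_0;\kY_0)$ (i.e.\ $0\in I$) is needed; this gives $\prod_s\norm{v_{a_s}}{\kY}=\kO\bigl(m^{\sum_s(a_s-\ell)_+}\bigr)$, and the elementary inequality $\sum_s(a_s-c)_+\leq(\sum_s a_s-c)_+$ (valid for $c\geq0$ and at least one summand) bounds the exponent by $(j-1-\ell)_+$, which is dominated by the target. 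For the linear term $Av_{j-1}$ I would exploit $v_{j-1}\in\P_m\kY$: combining \eqref{eqn:proj_est} with \eqref{e.A_ell} gives $\norm{A\P_m w}{\kY}\leq m^{1-\theta}\norm{w}{\kY_\theta}$, which at the first step ($j-1=\lfloor\ell\rfloor$, seed in $\kY_\theta$) produces precisely $m^{1-\theta}=m^{(\lfloor\ell\rfloor+1)-\ell}$, while at later steps the crude bound $\norm{A\P_m w}{\kY}\leq m\norm{w}{\kY}$ multiplies the previous power by $m$; in both regimes one lands on $\kO(m^{(j-\ell)_+})$. Taking $j=k$ yields \eqref{eqn:proj_flow_m_dep}.

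The extension of the local regularity of Lemma \ref{l.proj-semiflow} from $[0,T_*]$ to $[0,T]$ that underlies the continuity statements is a routine covering argument on $\lceil T/T_*\rceil$ subintervals, using the global a priori bound \eqref{e.glob-cond-proj_flow_reg} to restart at each subinterval endpoint. The main obstacle is the quantitative $m$-bookkeeping: obtaining the sharp exponent $m^{k-\ell}$, rather than the cruder $m^{k-\lfloor\ell\rfloor}$ one would get from integer regularity alone, hinges on seeding the $\kY_0$-induction with the fractional-level bound $v_{\lfloor\ell\rfloor}\in\kY_\theta$, and on the subadditivity inequality guaranteeing that the Fa\`a di Bruno terms never overtake the linear term.
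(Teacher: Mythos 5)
Your proposal is correct and follows essentially the same route as the paper's proof: a first induction giving uniform $\kCb([0,T];\kY_{\ell-j})$ bounds on $\partial_t^j\Phi_m^t(U^0)$ for $j\leq\lfloor\ell\rfloor$, followed by a Fa\`a di Bruno induction in $\kY$ seeded by the fractional splitting $\|A\P_m w\|_{\kY}\leq m^{1+\lfloor\ell\rfloor-\ell}\|w\|_{\kY_{\ell-\lfloor\ell\rfloor}}$ and closed by the subadditivity of the exponents in the nonlinear terms. The only differences (composition-form rather than partition-form Fa\`a di Bruno, and treating $\ell<1$ together with $\ell\geq1$ instead of as a separate case) are cosmetic.
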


\begin{proof}
 Due to Lemma \ref{l.proj-semiflow} statement \eqref{eqn:globproj_flow_reg} 
is non-trivial only if $\ell \geq 1$. In this case let  $u_m(t) = \Phi_m^t(U^0)$.
From Lemma \ref{l.proj-semiflow} with $\kY$ replaced by $\kY_\ell$, using  \eqref{eqn:globproj_flow_reg} 
we also get $u_m \in \kCb([0,T];{\kB^R_{\ell}})$. From 
\eqref{e.glob-cond-proj_flow_reg} and  \eqref{eqn:see_proj} we conclude
that $ \partial_t u_m \in \kCb([0,T];{\kY_{\ell-1}})$ and thus,  $ u_m \in \kCb([0,T];{\kY_\ell})  \cap \kCb^1([0,T];{\kB_{\ell-1}^R})$
with bounds uniform in  $m\geq m_*$ and $U^0$ satisfying
 \eqref{e.glob-cond-proj_flow_reg}.  That proves   \eqref{eqn:globproj_flow_reg} for $k=1$.
If $\ell \geq 2$ then from \eqref{eqn:see_proj} we  get
$\partial_t u_m \in \kCb^1([0,T];{\kY_{\ell-2}})$ and therefore 
$  u_m \in \kCb^2([0,T];{\kB_{\ell-2}^R})$.
 Inductively this proves that  
\begin{equation}
\Phi_m(U^0)\in \kCb^{k}([0,T]; \kB_{\ell-k}^R)
\label{eqn:proj_flow_k_m_indep}
\end{equation}
for $ k\leq \ell$ 
  with uniform bounds in $m\geq m_*$ and in all $U^0$ satisfying
  \eqref{e.glob-cond-proj_flow_reg}.
This proves \eqref{eqn:globproj_flow_reg} for $k \leq \ell$ with $m$ independent
bounds.   

To prove
  \eqref{eqn:proj_flow_m_dep} we proceed by induction over $k =
  \lceil \ell \rceil,\ldots,N$.
We consider the cases $\ell<1$ and $\ell \geq 1$ separately.
 If $\ell<1$ then from \eqref{eqn:see_proj} we have 
 \[
  \|\partial_t u_m\|_{\kC([0,T];\kY)} \leq \|A_m u_m\|_{\kC([0,T];\kY)}+  M 
 \leq m^{1-\ell} \|u_m\|_{\kC([0,T];\kY_\ell)}+ M =O(m^{1-\ell}) 
 \]
 where $M=M_0[R]$,
with order constant 
independent of $m\geq m_*$ and of  $U^0$ satisfying
 \eqref{e.glob-cond-proj_flow_reg}. 
This then immediately shows 
 \eqref{eqn:proj_flow_m_dep} for $k= \lceil \ell \rceil=1$.
 If $\ell\geq 1$, $\ell \in\Z$ then the start of the induction is $k=\ell$, and the left hand side of \eqref{eqn:proj_flow_m_dep}
 is bounded by \eqref{eqn:globproj_flow_reg}.
 
 If $\ell\geq 1$, $\ell \notin\Z$
 then the start
  of the induction is $k = \lceil \ell \rceil > \ell$. Using
 \eqref{eqn:proj_flow_k_m_indep}   we can bound
  the $\lfloor \ell \rfloor$-th derivative independent of $m$ in the
  $\kY_{\ell-\lfloor \ell \rfloor}$ norm. Using the Fa\`a di Bruno
  formula \cite{FaadiBruno} we find that for any $i \in \N$,  $i< N$,
  \begin{align}
    \partial_t^{i+1}u_m &= \partial_t^i(Au_m+B_m(u_m))\notag \\
    &= A(\partial_t^iu_m)+ \sum_{1\leq \beta \leq i}\frac{i!
      \D^\beta_uB_m(u_m)}{j_1!\cdots j_i!} 
    \prod_{\alpha=1}^i\left(\frac{\partial_t^\alpha
        u_m}{\alpha!}\right)^{j_\alpha},
    \label{e.FaaDiBruno}
  \end{align}
  where  
 $\beta=j_1+\cdots+j_i$ and the    sum is over all $j_\alpha \in N_0$, $\alpha = 1,\ldots, i$, with 
  $j_1+2j_2+\cdots +  ij_i=i$. We consider \eqref{e.FaaDiBruno} with $i$
  replaced by $\lfloor \ell\rfloor$. Then the second term in the last
  line of \eqref{e.FaaDiBruno} is bounded independent of $m\geq m_*$
  due to \eqref{eqn:globproj_flow_reg}. Furthermore, since
  $\partial_t^{\lfloor\ell\rfloor}u_m\in\kY_{\ell-\lfloor\ell\rfloor}$  by \eqref{eqn:proj_flow_k_m_indep}  with   uniform bound in $m\geq m_*$,
  we estimate
  \[
  \|A(\partial_t^{\lfloor\ell\rfloor}u_m)\|_\kY
  =\|A^{1+\lfloor\ell\rfloor-\ell}(A^{\ell-\lfloor\ell\rfloor}
  \partial_t^{\lfloor\ell\rfloor}u_m)\|_\kY=\kO(m^{1+\lfloor\ell\rfloor-\ell}),
  \]
  where we have used    the first
  inequality of (\ref{eqn:proj_est}). So \eqref{eqn:proj_flow_m_dep}
also  holds true for $k =i+1= \lceil\ell\rceil$ when $\ell> 1$, $\ell \notin \Z$. 

Now fix an integer $k$ and assume that
  \eqref{eqn:proj_flow_m_dep} holds for all integers $i$ such
  that $\ell\leq i \leq k$. We now  use
  \eqref{e.FaaDiBruno} with $i=k$ to estimate $\|  \partial_t^{k+1}u_m\|_\kY$. By the first inequality of
  (\ref{eqn:proj_est}) and the induction hypothesis the first term on
  the second line of \eqref{e.FaaDiBruno}  is
  $\kO(m^{k+1-\ell})$. Moreover, by \eqref{eqn:globproj_flow_reg} and the induction hypothesis, the
  $\kY$ norm of the second term is of order $\kO(m^n)$ with
$n=0$  if $j_{\lceil \ell \rceil} + \ldots + j_k=0$  and
$$
n= (\lceil \ell \rceil -\ell)j_{\lceil \ell \rceil}+\cdots+
(k-\ell)j_k\leq k-\ell.
$$
if  $j_{\lceil \ell \rceil} + \ldots + j_k>0$.
Thus we see  that the right hand term  of \eqref{e.FaaDiBruno}, with $i=k$, is $\kO(m^{k+1-\ell})$
as well.
\end{proof}

\begin{lemma}[$m$-dependent bounds for derivatives of $\Psi_m$ and $W_m$]
  Assume that the semilinear evolution equation \eqref{eqn:see}
  satisfies (A) and (B), and apply a Runge-Kutta method
  subject to (RK1) and (RK2). Choose 
 $\ell\in I^-$ and
$k\in \N_0$ with 
  $\ell \leq k\leq N$.
  Let $R>0$ and define $r$ as in \eqref{e.R*}.
Then there is
  $h_*>0$   such that  for $m\geq 0$
and    $i=1,\ldots, s$, 
  \begin{equation}
    W_m^i(U,\cdot),\Psi_m(U,\cdot) \in \kC_b^{k}([0,h_*];\kB_0^R)
    \quad\mbox{
      for}   \quad i=1,\ldots,s
    \label{e.wmpsim}
  \end{equation}
  with $m$-dependent bounds which are uniform in 
$U \in \kB_\ell^r  $.
Moreover 
\begin{equation}\label{e.deriv-wm-psim-bound}
    \sup_{ \substack{  U\in\kB_\ell^r      \\ h\in[0,h_*] }}
\norm{\partial_h^{k}\Psi^h_m(U)}{\kY} = \kO(m^{k-\ell}),
    \quad
    \sup_{ \substack{  U\in \kB_\ell^r 
        \\ h\in[0,h_*]  }    }\norm{\partial_h^{k}W_m(U,h)}{\kY^s} = \kO(m^{k-\ell}).
  \end{equation}
The  order constants in  \eqref{e.deriv-wm-psim-bound} depend only $R$,  \eqref{e.lemShA},
$\a$ and $\b$ from the numerical method 
and the bounds afforded by (B) on balls of radius $R$. 
 \label{lem:psim-m-dep-deriv}
\end{lemma}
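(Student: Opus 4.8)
The plan is to follow the proof of Lemma~\ref{l.phimDerivs} almost verbatim, with the projected differential equation \eqref{eqn:see_proj} replaced by implicit differentiation of the fixed-point identity \eqref{e.Pi} for the projected stage vector $W_m$, and by direct differentiation of the step formula \eqref{e.psi} for $\Psi_m$. Two facts drive every estimate. First, by Lemma~\ref{l.ShA} each $h$-derivative of the operators $(\id-h\a A)^{-1}$ and $\sS(hA)$ produces a power of $A$ while leaving the remaining operator factor bounded, cf. \eqref{e.csk} and \eqref{e.Lambdak}. Second, by \eqref{eqn:proj_est} applying $A^j\P_m$ to $\kY_\ell$-data costs at most a factor $m^{(j-\ell)_+}$; in particular $\|A^k\P_m U\|_{\kY}=\kO(m^{k-\ell})$ for $U\in\kB_\ell^r$ and $k\geq\ell$. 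The target exponent $m^{k-\ell}$ should therefore arise precisely from moving $k$ resolvent derivatives onto the $\kY_\ell$-initial datum.

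First I would establish the low-order, $m$-independent regularity that starts the induction, mirroring \eqref{eqn:proj_flow_k_m_indep}. Differentiating the projected form of \eqref{e.Pi} once, the resolvent derivative produces a single factor of $A$, which lowers the regularity of the $\kY_\ell$-data $\P_m U$ and $B_m(W_m)$ by one order; the only term containing the top derivative, namely $h\a(\id-h\a A)^{-1}\D B_m(W_m)\,\partial_h W_m$, is moved to the left and the operator $\id-h\a(\id-h\a A)^{-1}\D B_m(W_m)=\id-\D\Pi$ is inverted, its inverse being bounded by $1/(1-c)$ since $\Pi$ is a contraction (Lemma~\ref{l.reg-proj-num-method}). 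This gives $\partial_h W_m\in\kY_{\ell-1}$ with bounds uniform in $m$ and in $U\in\kB_\ell^r$; iterating yields $\partial_h^i W_m\in\kY_{\ell-i}$ with $m$-independent bounds for every integer $i\leq\ell$, and the same for $\Psi_m$ through \eqref{e.psi}.

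Next comes the inductive step for $\lceil\ell\rceil\leq k\leq N$. Differentiating the projected form of \eqref{e.Pi} $k$ times and expanding by the Leibniz and Fa\`a di Bruno formulae, every summand takes the form $\partial_h^j[h\a(\id-h\a A)^{-1}]\,\P_m\,\D^\beta B(W_m)[\partial_h^{\alpha_1}W_m,\dots,\partial_h^{\alpha_\beta}W_m]$ with $j+\alpha_1+\cdots+\alpha_\beta=k$, together with the purely linear term $\partial_h^k[(\id-h\a A)^{-1}]\1\P_m U$. By Lemma~\ref{l.ShA} the operator factor carries $A^{j-1}$ (respectively $A^k$ for the linear term); I would estimate each argument factor by the induction hypothesis, putting it into $\kY_{\ell-\alpha_i}$ with $m$-independent bound when $\alpha_i\leq\ell$ and into $\kY$ with size $\kO(m^{\alpha_i-\ell})$ otherwise, then apply $\D^\beta B(W_m)$ on the common space (here the hypothesis $\ell\in I^-$ guarantees $\ell-\alpha_i\in I$ so that the multilinear bound from (B) is available), and finally convert the surviving power of $A\P_m$ into a power of $m$ via \eqref{eqn:proj_est}. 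A short bookkeeping check shows that every such summand is $\kO(m^{k-1-\ell})$, so that the linear term $A^k\P_m U=\kO(m^{k-\ell})$ dominates; the unique top-order contribution $h\a(\id-h\a A)^{-1}\D B_m(W_m)[\partial_h^k W_m]$ is again absorbed by inverting $\id-\D\Pi$, giving $\partial_h^k W_m=\kO(m^{k-\ell})$. The bound for $\partial_h^k\Psi_m$ then follows from \eqref{e.psi}: its linear term $\partial_h^k\sS(hA)\P_m U$ is $\kO(m^{k-\ell})$ by \eqref{e.csk} and \eqref{eqn:proj_est}, and its nonlinear term is treated exactly as above, now using the $W_m$-bounds already in hand.

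I expect the bookkeeping in the inductive step to be the main obstacle: one must check that after converting every resolvent derivative into a power of $A$ and every $A^j\P_m$ into $m^{(j-\ell)_+}$, the combined exponent of each nonlinear summand stays strictly below $k-\ell$, while correctly identifying the single summand that carries $\partial_h^k W_m$ and is removed by the invertible operator $\id-\D\Pi$. By contrast the qualitative claim \eqref{e.wmpsim} is routine: for each fixed $m$ the operator $A$ is bounded on $\P_m\kY$, so all operators in \eqref{e.Pi} and \eqref{e.psi} depend smoothly on $h$, and the asserted $\kCb^k$ regularity, with (possibly $m$-dependent) bounds uniform in $U\in\kB_\ell^r$, follows from the implicit function theorem together with the smoothness of $B$ afforded by (B).
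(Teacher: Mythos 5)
Your proposal follows essentially the same route as the paper's proof: differentiate the fixed-point identity \eqref{e.Pi} and the step formula \eqref{e.psi} $k$ times, expand via Leibniz and Fa\`a di Bruno, use Lemma~\ref{l.ShA} together with \eqref{eqn:proj_est} to convert operator derivatives into powers of $m$, run an induction over $k=\lceil\ell\rceil,\dots,N$ seeded by the $m$-independent regularity on the scale spaces $\kY_{\ell-j}$ (which is exactly \eqref{eqn:proj_psi_m_indep}), and absorb the single top-order term $h\a(\id-h\a A)^{-1}\D B(W_m)\partial_h^k W_m$ by inverting $\id-\D\Pi$ --- the paper does the identical absorption by taking $h_*$ small so that $\Lambda\|\a\|h_*M_0'[R]<1$.

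One quantitative slip to fix: your claim that every nonlinear summand is $\kO(m^{k-1-\ell})$, so that the linear term "dominates", is false when $\ell<1/2$. For instance with $\ell=1/4$, $k=2$, the Fa\`a di Bruno term $h\a(\id-h\a A)^{-1}\P_m\,\D^2B(W_m)[\partial_h W_m,\partial_h W_m]$ is only $\kO(m^{2(1-\ell)})=\kO(m^{k-2\ell})$, and $k-2\ell>k-1-\ell$ there. This is harmless --- all such terms are still $\kO(m^{k-\ell})$ (the paper's bound, obtained from $n=\sum_{\alpha\geq\lceil\ell\rceil}(\alpha-\ell)j_\alpha\leq k-\ell$), which is all the induction requires --- but the correct bookkeeping criterion is the one you state later ("at most $k-\ell$"), not the stronger $k-1-\ell$.
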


\begin{proof}
 By Lemma
  \ref{l.reg-proj-num-method}, with $\kY$ replaced by $\kY_{\ell-j}$,
  \begin{equation}
    \Psi_m, W_m^i \in \kC_b^{j} ([0,h_*]; \kB^R_{\ell-j}),
    \label{eqn:proj_psi_m_indep}
  \end{equation}
for $i=1,\ldots, s$, $j=1\ldots, \lfloor\ell\rfloor$,
  with bounds independent over $m\geq 0$ and 
$U \in \kB^r_\ell$.  From \eqref{e.psi} we
formally  obtain 
  \begin{equation}\label{e.dkhPsi_m}
  \partial_h^k\Psi_m^h(U,h) = \partial^k_h\sS(hA)\P_m U+ \sum_{j=0}^k
{k \choose j}  \b^T \partial^{k-j}_h ( h(\id-h\a A)^{-1}) \partial_h^j \P_m
  B(W_m(U,h)).
  \end{equation}
  By \eqref{e.csk}  and    \eqref{eqn:proj_est} there are $h_*>0$, $c_{\sS,k}$ such that
for all  $h\in
  [0,h_*]$ and $k\geq \ell$
  \begin{equation}\label{e.Sm-est}
    \|\partial^k_h\sS(hA)\P_m \|_{\kY_\ell \to \kY} \leq
    \|\partial^k_h\sS(hA)  \|_{\kY_k \to \kY} \|\P_m  \|_{\kY_\ell \to \kY_k}
    \leq  c_{\sS,k} m^{k-\ell}.
  \end{equation}
 In addition \eqref{e.Lambdak} shows that for $n\in \N$ with $n-1 \geq \ell$
  \begin{align}
    \|\partial^n_h( h(\id-h\a A)^{-1} \P_m)\|_{\kY^s_\ell \to \kY^s} &
    \leq \|\partial^n_h( h(\id-h\a A)^{-1} )\|_{\kY^s_{n-1} \to \kY^s}
    \|  \P_m\|_{\kY^s_\ell \to \kY^s_{n-1}}\nonumber \\
    & \leq \frac{\Lambda_{n}}{\|\a\|} m^{n-1-\ell}.\label{e.hOpEst}
  \end{align}
Using  \eqref{e.hOpEst} (with $\ell$ replaced by $\ell-j$ and $n$ by
$k-j$) and 
\eqref{eqn:proj_psi_m_indep},
we can estimate the $j$-th term in the
sum of \eqref{e.dkhPsi_m} for $0\leq j\leq \ell \leq k$  as follows:
\begin{align}
\| &\partial^{k-j}_h ( h(\id-h\a A)^{-1}) \partial_h^j \P_m
  B(W_m(U,h))\|_{\kY^s}\notag \\
& \leq
\|\partial^{k-j}_h ( h(\id-h\a A)^{-1})\P_m \|_{ \kY^s_{\ell-j}\to\kY^s} \| \partial_h^j 
  B(W_m(U,h))\|_{\kY_{\ell-j}^s} \notag\\
& \leq O(m^{ k-\ell}).
\label{e.jLessEll}
\end{align}

 To obtain the first estimate of
\eqref{e.deriv-wm-psim-bound} assume that there is $b_j>0$ such that
  \begin{equation}\label{e.derivBmbound}
    \|\partial_h^j \P_m B(W_m(U,h))\|_\kY \leq b_j m^{j-\ell}
  \end{equation}
  for all $h\in [0,h_*]$, $U \in\kB^r_\ell$ and $k\geq j
  \geq \ell$.
 This will be proved below. Then, using \eqref{e.hOpEst} and \eqref{e.derivBmbound}
 we can estimate the $j$-th term in the
sum of \eqref{e.dkhPsi_m} for $j\geq \ell$ as follows:
\begin{align}
 \|\partial^{k-j}_h  & ( h(\id-h\a A)^{-1}) \partial_h^j \P_m
  B(W_m(U,h))\|_{\kY^s}\notag \\
& \leq
\|\partial^{k-j}_h ( h(\id-h\a A)^{-1})\P_m \|_{ \kY^s\to \kY^s} \| \partial_h^j 
  B(W_m(U,h))\|_{\kY^s}\notag\\
& \leq  \frac{\Lambda_{k-j}}{\|\a\|} m^{k-j} b_j m^{j-\ell}
= O(m^{k-\ell}).
\label{e.j>Ell}
\end{align}
 These estimates, with
\eqref{e.dkhPsi_m} and \eqref{e.Sm-est},  then prove
the first estimate of \eqref{e.deriv-wm-psim-bound}.

 To prove \eqref{e.derivBmbound} and the second estimate of \eqref{e.deriv-wm-psim-bound}, 
 differentiate \eqref{e.Pi} $k$ times in
  $h$: 
  \begin{equation}\label{e.derivwmInduction}
    \partial_h^k W_m = \partial_h^{k}(\id- h\a A)^{-1}\1\P_m U
    + 
    \sum_{j=0}^k{k \choose j}\partial_h^{k-j}(h\a(\id- h\a A)^{-1}\P_m) \partial_h^j B(W_m).
  \end{equation}
 By \eqref{e.Lambdak} and \eqref{eqn:proj_est}, 
  for $k\geq \ell$,
  \begin{equation}\label{e.firstterm}
  \sup_{h\in [0,h_*]}
  \| \partial_h^k(\id- h\a A)^{-1}\1\P_m \|_{\kY_\ell^s\to \kY^s} \leq
   \Lambda_k m^{k-\ell}.
  \end{equation}
  Now we show inductively  the second estimate of \eqref{e.deriv-wm-psim-bound} and
 estimate \eqref{e.derivBmbound} for  $k = \lceil \ell \rceil,\ldots,N$.  If $\ell \in \N_0$ then the start
  of the induction is $k = \ell$, and the required estimates are given
  by Theorem~\ref{l.reg-proj-num-method}. If $\ell \notin \N_0$, then the
  start of the induction is $k = \lceil \ell \rceil > \ell$.  
  If $k=  \lceil \ell \rceil$ then, due to \eqref{e.firstterm}, 
 the first term in  \eqref{e.derivwmInduction}  is of order
  $\kO(m^{k-\ell})$, and all other terms in the sum  of \eqref{e.derivwmInduction} are bounded due to
 \eqref{e.Lambdak} and \eqref{eqn:proj_psi_m_indep}   
  except when $j=k$ in the sum. Hence,  using \eqref{e.Lambda},
  \begin{equation}\label{e.derivwmest}
    \sup_{\substack{h\in [0,h_*]\\ U \in \kB^r_\ell  }}
    \| \partial_h^k W_m(U,h)\|_{\kY^s} \leq \kO(m^{k-\ell}) + 
    \Lambda \|\a\| h_* \sup_{\substack{h\in [0,h_*]\\ U \in  \kB^r_\ell }}\| \partial_h^k B(W_m(U,h))\|_{\kY^s}.
  \end{equation}

  Now we use the Fa\`a di Bruno formula \eqref{e.FaaDiBruno} again:
  \begin{equation}\label{e.Bwm}
    \partial_h^k B(W_m(U,h)) = 
    \sum_{1\leq \beta \leq k}\frac{k! \D^\beta_wB_m(W_m(U,h))}{j_1!\cdots j_k!}
      \prod_{\alpha=1}^k\left(\frac{\partial_h^\alpha W_m(U,h)}{\alpha!}\right)^{j_\alpha}
  \end{equation}
  where  $\beta=j_1+\cdots+j_k$ and the  sum is over all $j_\alpha \in N_0$, $\alpha=1,\ldots, k$ with
  $j_1+2j_2+\cdots kj_k=k$. We see that all terms on the right hand
  side of \eqref{e.Bwm} contain $h$-derivatives of order at most $k-1$
  and are therefore bounded and in particular $\kO(m^{k-\ell})$,
  except when $\beta=j_k=1$ and $j_\alpha=0$ for $\alpha\neq k$.  So we obtain
  \begin{align}
    \sup_{\substack{h\in [0,h_*]\\ U \in \kB^r_\ell   }}
    \| \partial_h^k B(W_m(U,h))\|_{\kY^s} & \leq \kO(m^{k-\ell}) +
    \sup_{\substack{h\in [0,h_*]\\ U \in \kB^r_\ell }}
    \|\D B(W_m(U,h)) \partial_h^k W_m(U,h)\|_{\kY^s} \notag \\
    & \leq \kO(m^{k-\ell}) + M'_0[R] \sup_{\substack{h\in [0,h_*]\\ U \in
        \kB^r_\ell  }} \| \partial_h^k W_m(U,h)\|_{\kY^s}.
    \label{e.Bwmderivest}
  \end{align}
  Substituting this into \eqref{e.derivwmest} gives 
the second estimate of 
  \eqref{e.deriv-wm-psim-bound} for $k = \lceil \ell \rceil$ and $h_*$ small
  enough. Resubstituting this estimate into \eqref{e.Bwmderivest}
  also shows \eqref{e.derivBmbound} for $k=\lceil \ell \rceil$.

  Now assume these estimates hold true for all $\hat k \in N_0$ with $\ell \leq \hat{k}\leq
  k-1$ and let $k\leq N$. 
  Then, using the induction hypothesis and the above estimates, in particular
\eqref{e.jLessEll}, \eqref{e.derivBmbound}, \eqref{e.j>Ell} and \eqref{e.firstterm}, all
  terms in   \eqref{e.derivwmInduction} are
  $\kO(m^{k-\ell})$ except when $j = k$ in the sum.
We deduce that
  \eqref{e.derivwmest} remains valid under the induction
  hypothesis. Moreover, by the induction hypothesis,
 each term in the sum of  the Fa\`a di Bruno formula \eqref{e.Bwm} with 
$j_k=0$ is of order
$\kO(m^n)$ with $n=0$ if $j_{\lceil \ell \rceil}+ \ldots + j_{k-1}=0$ and
\[
n = (\lceil \ell \rceil -\ell)j_{\lceil \ell \rceil}+\cdots+
(k-1-\ell)j_{k-1}\leq k-\ell
\]
if $j_{\lceil \ell \rceil}+ \ldots + j_{k-1}>0.$
 Hence \eqref{e.Bwmderivest}  remains valid, and we deduce
 \eqref{e.derivBmbound} and  the second estimate  of 
\eqref{e.deriv-wm-psim-bound}  as before.
\end{proof}


\subsection{Trajectory error for nonsmooth data }\label{ss.err_nonsmooth}

Now we are ready to prove our main result:
 
\begin{theorem}[Trajectory error for nonsmooth data]
 \label{thm:ns_convergence}
  Assume that the semilinear
  evolution equation \eqref{eqn:see} satisfies
  (A) and (B) and apply a Runge-Kutta method \eqref{eqn:rk_both} subject to 
  (RK1) and (RK2). Let
 $\ell \in I^-$, $0< \ell \leq
  p+1 $, and fix $T>0$ and $R>0$. Then there exist constants
  $h_*>0$, $c_1>0$, $c_2>0$  such that for every   $U^0 $
 with
  \begin{equation}\label{e.condPhiGlobal}
   \|\Phi^t(U^0)\|_{\kY_\ell} \leq R, \quad\mbox{for}~~t\in[0,T]
  \end{equation}
  and for all
  $h\in[0,h_*]$   we have
  \begin{equation}
  \| \Phi^{nh}(U^0) -( \Psi^h)^n(U^0)\|_\kY \leq c_1e^{c_2nh}h^{p\ell/(p+1)},
 \label{e.globTrajEst}
 \end{equation}
 provided that $nh\leq T$.
 The constants $h_*$,
  $c_1$ and $c_2$ depend only on  $R$, $T$,  \eqref{eqn:semigroup_bound},  \eqref{e.lemShA}, $\a$,
$\b$ from the numerical method
  and the bounds afforded by (B).
\end{theorem}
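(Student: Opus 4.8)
The plan is to couple the Galerkin truncation parameter $m$ to the step size $h$ and to balance the projection error against the error of the time discretization of the projected system. Given initial data $U^0$ with $\|\Phi^t(U^0)\|_{\kY_\ell}\leq R$ on $[0,T]$, I would decompose the global error by inserting the projected semiflow $\Phi_m$ and the projected numerical method $\Psi_m$ as intermediaries, writing
\begin{equation*}
\Phi^{nh}(U^0)-(\Psi^h)^n(U^0) = \bigl[\Phi^{nh}(U^0)-\Phi_m^{nh}(U^0)\bigr] + \bigl[\Phi_m^{nh}(U^0)-(\Psi_m^h)^n(U^0)\bigr] + \bigl[(\Psi_m^h)^n(U^0)-(\Psi^h)^n(U^0)\bigr].
\end{equation*}
The first bracket is the projection error of the semiflow, bounded by $\kO(m^{-\ell})$ using Lemma~\ref{l.see_proj_err}. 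The third bracket is the accumulated difference between the full and projected numerical methods; using the Lipschitz stability of the numerical method (each $\Psi^h$ is a near-contraction on $\kY$ with constant $1+\kO(h)$) together with the one-step estimate \eqref{e.psipsim} of Lemma~\ref{l.reg-proj-num-method}, a discrete Gronwall argument gives a bound of the form $c\,e^{c_2 nh}m^{-\ell}$. The middle bracket is the genuine time-discretization error for the \emph{projected} system, which now lives on a finite-dimensional space where the semiflow and numerical method are smooth in $t$ and $h$ respectively.

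For the middle term I would apply the standard propagation-of-local-error argument for the projected system, writing the global error as a telescoping sum of propagated local errors,
\begin{equation*}
\Phi_m^{nh}(U^0)-(\Psi_m^h)^n(U^0) = \sum_{j=0}^{n-1}(\Psi_m^h)^{n-1-j}\Bigl(\Phi_m^h(\Phi_m^{jh}(U^0))-\Psi_m^h(\Phi_m^{jh}(U^0))\Bigr),
\end{equation*}
and controlling each propagated factor by the stability $\|(\Psi_m^h)^k\|_{\mathrm{Lip}}\leq e^{c_2 kh}$ uniformly in $m$, which follows from the near-contraction property. The local error of the projected method is the Taylor remainder of order $p+1$ as in \eqref{e.le}, so it is bounded by $h^{p+1}$ times a supremum of the $(p+1)$-st time/step derivatives of $\Phi_m$ and $\Psi_m$. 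This is exactly where the $m$-dependent derivative bounds enter: by Lemma~\ref{l.phimDerivs} and Lemma~\ref{lem:psim-m-dep-deriv} with $k=p+1$, these derivatives grow like $\kO(m^{\,p+1-\ell})$. Hence each local error is $\kO(h^{p+1}m^{\,p+1-\ell})$, and summing the $n=\kO(1/h)$ contributions with the exponential stability factor yields a middle-term bound of the form $c\,e^{c_2 nh}\,h^{p}m^{\,p+1-\ell}$.

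Combining the three pieces gives a total error bound of order $e^{c_2 nh}\bigl(m^{-\ell}+h^{p}m^{\,p+1-\ell}\bigr)$, and the final step is to optimize the free parameter $m$. Balancing the two terms requires $m^{-\ell}\sim h^{p}m^{\,p+1-\ell}$, that is $m^{\,p+1}\sim h^{-p}$, so I would choose $m=m(h)\sim h^{-p/(p+1)}$. Substituting this choice, both terms become $\kO(m^{-\ell})=\kO(h^{\,p\ell/(p+1)})$, which is precisely the claimed order $q(\ell)=p\ell/(p+1)$ in \eqref{e.globTrajEst}. I would need to verify along the way that for the chosen $m(h)\to\infty$ as $h\to 0$ the hypothesis \eqref{e.glob-cond-proj_flow_reg} of the derivative lemmas holds, i.e.\ that $\Phi_m^t(U^0)$ stays in a fixed ball $\kB_\ell^R$ (or a slightly enlarged one) uniformly in $m$; this follows from \eqref{e.condPhiGlobal} together with the projection-error estimate, possibly after enlarging $R$ by a fixed $\delta$ as in Lemma~\ref{l.see_proj_err}. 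The main obstacle I anticipate is organizing the three error contributions so that the stability constants, the ball radii, and the admissible ranges of $m$ and $h$ are all controlled \emph{uniformly}—in particular ensuring that the near-contraction stability of $\Psi_m^h$ holds with an $m$-independent constant and that the derivative growth rates from the two $m$-dependent lemmas are applied with the correct value $k=p+1$, which is legitimate exactly because $\ell\leq p+1$ places $p+1$ in the admissible range $[\ell,N]$ (using $N>\lceil L\rceil\geq p+1$).
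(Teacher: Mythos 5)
Your overall architecture---inserting $\Phi_m$ and $\Psi_m$ as intermediaries, bounding the semiflow projection error by Lemma~\ref{l.see_proj_err}, running a Lady Windermere/Gronwall argument for the projected system with the $m$-dependent derivative bounds of Lemmas~\ref{l.phimDerivs} and~\ref{lem:psim-m-dep-deriv} at $k=p+1$, and then coupling $m=h^{-p/(p+1)}$---is exactly the paper's strategy, and your treatment of the first two brackets is essentially the paper's Steps~1 and~2. However, your treatment of the third bracket has a genuine gap. The one-step estimate \eqref{e.psipsim} gives $\|\Psi^h(U)-\Psi^h_m(U)\|_{\kY}=\kO(m^{-\ell})$ with \emph{no} factor of $h$: the dominant contribution is the linear part $\sS(hA)\Q_m U$, cf.\ the first term of \eqref{e.psipsimHelp}. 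Feeding this per-step error into a discrete Gronwall sum over $n\sim T/h$ steps yields $\kO(m^{-\ell}/h)\,e^{c_2 T}$, not $c\,e^{c_2nh}m^{-\ell}$; with $m=h^{-p/(p+1)}$ this is $\kO(h^{p\ell/(p+1)-1})$, a loss of a full power of $h$ that makes the bound vacuous whenever $\ell\leq (p+1)/p$. The missing idea is the one the paper exploits in its Step~3: in the recursion $e^{n+1}(U^0)=[\Psi^h((\Psi^h)^n(U^0))-\Psi^h((\Psi^h_m)^n(U^0))]+e^1((\Psi^h_m)^n(U^0))$, the local term is evaluated at a point of $\P_m\kY$, where $\Q_m U=0$ kills the linear contribution, so that $e^1(U)=h\b^T(\id-h\a A)^{-1}\bigl(\P_m(B(W)-B(W_m))+\Q_m B(W)\bigr)=h\,\kO(m^{-\ell})$, see \eqref{e.e1Est}; summing $n\sim T/h$ such terms then really does give $\kO(m^{-\ell})$. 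The single exceptional term $e^1(U^0)$ at the initial datum, which in general does not lie in $\P_m\kY$, occurs only once and is $\kO(m^{-\ell})$ by \eqref{e.psipsim}, so it is harmless.

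There is a related uniformity issue you do not address: to apply \eqref{e.wwm} and \eqref{e.e1Est} at the iterates $(\Psi^h_m)^n(U^0)$ you need these to stay in a fixed $\kY_\ell$-ball uniformly in $n$, $h$, $m$, whereas you only verify the analogous condition \eqref{e.glob-cond-proj_flow_reg} for the continuous projected trajectory $\Phi^t_m(U^0)$. The paper obtains this in \eqref{e.psim-Ell}: the Step-2 error bound $\kO(h^p m^{p+1-\ell})$ in the $\kY$-norm may be amplified by $m^\ell$ (legitimate since both points lie in $\P_m\kY$), and equals $\kO(h^pm^{p+1})=\kO(1)$ precisely because of the coupling, so the numerical iterates inherit a uniform $\kY_\ell$-bound $r_\psi$. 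Finally, a small slip in your last sentence: the claim $N>\lceil L\rceil\geq p+1$ does not follow from the hypotheses, since $\ell\leq p+1$ and $\ell\leq L$ do not force $L\geq p+1$; the applicability of the derivative lemmas at $k=p+1$ requires $p+1\leq N$, which is an implicit largeness assumption on $N$ rather than a consequence of the stated conditions.
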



\noindent
{\em Proof of Theorem \ref{thm:ns_convergence}.} 
The proof consists of several steps, as outlined in the diagram below: 

\vspace*{0.3cm}
\begin{figure}[h]
\small
\begin{tabular}{ccc}
 Solution of the PDE &
Error to be estimated &
RK solution of   PDE\\[1ex]
Projection error $\downarrow$  & &  Projection error $\uparrow$ \\[1ex]
 Solution of   projected PDE &
$\longrightarrow$  &
RK solution of   projected PDE\\
 & Numerical scheme error & 
\end{tabular}
\normalsize
\end{figure}
\vspace*{0.1cm}

\noindent
We want to estimate the error of the Runge Kutta time discretization of the evolution equation (first line of the diagram). To do this, in a first step,
 we discretize in space by a Galerkin truncation. We estimate the projection error and prove regularity of the solution  $u_m(t)$ of the projected system (first column in the diagram). 
In the second step of the proof we investigate the error of the time discretization
of the space-discretized system (third row in the diagram) and couple the spatial discretization parameter $m$ with the time step size $h$ in suitable way. In the third step of the proof (third column of the diagram) we  prove regularity of the space-time discretization and estimate the  projection error of the Runge Kutta time discretization. This concludes the proof. \\

\noindent
{\em Step 1 (Regularity of solution of the projected system)}
In a first step we  aim to prove regularity  of the continuous solution of the projected system
$u_m(t) = \phi_m^t(\P_m U^0) =\Phi_m^t(  U^0)$ which will be needed later.
For the proof we denote $R$ from \eqref{e.condPhiGlobal} as $R_\Phi$ to indicate that it is a bound on $\Phi^t(U^0)$.
We will prove that there is some $r_\phi >0$
such that
\begin{equation}\label{e.hatC_ell}
\|\phi_m^t(\P_m U^0) \|_{\kY_\ell} \leq  r_\phi
\end{equation} 
uniformly in $U^0$ satisfying
\eqref{e.condPhiGlobal} and $m\geq m_*$, $t\in [0,T]$, where $m_*\geq 0$ is sufficiently
large.
Fix $\delta>0$. Then we have 
\begin{align}
\|\Phi_m^t(U^0)) \|_{\kY_\ell}  & \leq \|\P_m \Phi^t(U^0) -\Phi_m^t(U^0) \|_{\kY_\ell} + \|\P_m \Phi^t(U^0) \|_{\kY_\ell} \notag \\
& \leq m^\ell \|\P_m \Phi^t(U^0) -\Phi_m^t(U^0) \|_{\kY} 
+  \| \Phi^t(U^0) \|_{\kY_\ell} \notag \\
& \leq R_\Phi  \e^{(\omega + M')t}  + R_\Phi  =  r_\phi 
\label{e.umEll}
\end{align}
for $U^0$ satisfying \eqref{e.condPhiGlobal} and $m\geq m_*$. Here $M' = M'_0[R_\Phi+ \delta]$  and we used 
\eqref{eqn:proj_est} in the second estimate and  Lemma \ref{l.see_proj_err} and  \eqref{e.condPhiGlobal} 
in the final estimate. This proves \eqref{e.hatC_ell}.

\smallskip
\noindent
{\em Step 2 (Trajectory error of the time discretized projected system)}
Next we aim to estimate the trajectory error of the time discretization of the projected system.
 First note  that by Theorem~\ref{l.reg-proj-num-method}  (with $r$ replaced by $2r_\phi$ and
consequently $R$ by $4 r_\phi \Lambda$)
there is $h_*>0$ such that for $m \geq 0$,  $h \in [0,h_*]$
 we have $
W^i_h, \Psi^h_m \in \kCb^1(\kB^{2r_\phi}_0;\kY)
$, $i=1,\ldots, s$,
with uniform bounds in $m \geq 0$,  $h \in [0,h_*]$.
Moreover, using \eqref{e.psi}, \eqref{e.ShA} and \eqref{e.Lambda}  
  we obtain the following bound for $h \in [0,h_*]$
 to be used later:
\begin{align}
\sup_{ U \in \kB^{2r_\phi}_0}
 \| \D\Psi_m^h(U)\|_{ \kY\to\kY}  & \leq   \|\sS(hA)\|_{ \kY\to \kY} 
+ h\Lambda \|\b\| M' \|  W_m'(U)\|_{ \kY\to\kY^s}
\notag \\
& \leq   1 + \sigma h + h \Lambda  \|\b\|  M' \|  W_m'(U)\|_{\kY\to\kY^s}=: 1 + \sigma_\Psi h
\label{e.DPsimEst}.
\end{align}
where $M' = M'_0[4 r_\phi \Lambda]$.

 Now we define the global error of the projected system, for $jh\leq T$,
\begin{equation}
  E_m^j(U^0,h)=\norm{\Phi^{jh}_m(U^0)-(\Psi^h_m)^j(U^0)}{\kY}.
  \label{eqn:ge_proj}
\end{equation}
  We estimate for any $U^0$ satisfying \eqref{e.condPhiGlobal} and for all $(n+1)h\leq T$, $h\in [0,h_*]$, $m\geq m_*$,
  \begin{subequations}
    \label{e.mainComp}
    \begin{align}
      E_m^{n+1}(&  U^0, h) = \norm{\Phi^{(n+1)h}_m( U^0) -
        (\Psi^h_m)^{n+1}(U^0) }{\kY}
      \notag \\
      &\leq \norm{ \Phi^h_m(\Phi_m^{nh}(U^0)) - \Psi^h_m(\Phi_m^{nh}(U^0))
      }{\kY}
    + \norm{ \Psi^h_m(\Phi_m^{nh}(U^0)) -
        \Psi^h_m((\Psi^h_m)^{n}(U^0)) }{\kY}
      \notag \\
      &\leq \frac{h^{p+1}}{(p+1)!}\sup_{\tau\in[0,h]} \left(
        \norm{\partial_\tau^{p+1}\Phi^\tau_m(\Phi_m^{nh}(U^0))}{\kY}
        +\norm{\partial_\tau^{p+1}\Psi^\tau_m(\Phi_m^{nh}(U^0)))}{\kY} \right) \notag \\
      &  + \sup_{\theta\in[0,1]}
      \norm{\D\Psi^h_m(\Phi_m^{nh}(U^0)+\theta((\Psi^h_m)^{n}(U^0)-\Phi_m^{nh}(U^0)))}{\kY\to \kY}
      \cdot E_m^n(U^0,h) \label{e.bothSups} \\
      &\leq \frac{h^{p+1}}{(p+1)!}\left(\sup_{t\in[0,T]} 
        \norm{\partial_t^{p+1}\Phi_m^t(U^0)}{\kY}+\sup_{t\in[0,T]}\sup_{h\in[0,h_*]}
        \norm{\partial_h^{p+1}\Psi^h_m(\Phi_m^t(U^0))}{\kY}\right) \notag \\
      &\quad +
      \sup_{U\in \kB^{ 2r_\phi}_0}\norm{\D\Psi^h_m(U)}{ \kY\to \kY}\cdot
      E_m^n(U^0,h)
      \label{e.dpsiEst}\\
      &\leq \rho h^{p+1} m^{p+1-\ell} + (1+\sigma_\Psi h)E_m^n(U^0,h),
     \notag
    \end{align}
  \end{subequations}
  for some $\rho > 0$. 
 Due to \eqref{e.DPsimEst},  the second lines of \eqref{e.bothSups} and 
\eqref{e.dpsiEst} are valid as long as 
\begin{equation}\label{e.CondmeanvalueThm}
\Phi_m^{nh}(U^0)+\theta((\Psi^h_m)^{n}(U^0)-\Phi_m^{nh}(U^0))\in \kB_0^{2r_\phi}, \quad \theta \in [0,1], nh \leq T, h\in [0,h_*].
\end{equation}
Moreover the first supremum of \eqref{e.dpsiEst} is $O(m^{p+1-\ell})$  
by  Lemma \ref{l.phimDerivs}, with $R$ replaced by $r_\phi$. The second supremum  of \eqref{e.dpsiEst} is $O(m^{p+1-\ell})$    by   Lemma~\ref{lem:psim-m-dep-deriv}, with
$\kB^r_\ell$ replaced by $\kB_\ell^{r_\phi}$ (and $R$ replaced by $2r_\phi\Lambda$).

Clearly $E_m^0(U,h)=0$, so
  \begin{align*}
    E_m^n(U,h) &\leq \rho h^{p+1} m^{p+1-\ell}\frac{(1+\sigma_\Psi  h)^n-1}{\sigma_\Psi  h} \\
    &\leq \frac{\rho}{\sigma_\Psi } h^pm^{p+1-\ell}\left(1+\frac{n\sigma_\Psi  h}{n}\right)^n 
\leq \frac{\rho}{\sigma_\Psi }  h^pm^{p+1-\ell}e^{n\sigma_\Psi h}.
  \end{align*}
Choosing $m(h) = h^{-p/(p+1)}$ we see that  for $nh\leq T$, $h \in [0,h_*]$,
\begin{equation}\label{e.glob-ge_m}
  \|(\Psi^h_m)^n(U^0) - \Phi^{nh}_m(U^0) \|_{\kY} \leq
  \frac{\rho}{\sigma_\Psi }\e^{\sigma_\Psi T} h^{p}m ^{p+1-\ell  }
  = C\e^{\sigma_\Psi  T}h^{ \ell p/(p+1) }.
\end{equation}
Using  \eqref{e.glob-ge_m} we can
  ensure that   for $nh\leq T$, $h \in [0,h_*]$
  \begin{equation}
\label{e.enInY}
\|(\Psi^h_m)^n(U^0) - \Phi^{nh}_m(U^0) \|_{\kY} \leq r_\phi
\end{equation}
  by possibly reducing $h_*>0$,
and hence that \eqref{e.CondmeanvalueThm} holds.


 \smallskip
\noindent
{\em Step 3 (Projection error of numerical trajectory)}
We now  estimate  the global
projection error of the numerical method. We
will prove that for $m(h) = h^{-p/(p+1)}$, $nh\leq T$, $h \in [0,h_*]$,
\begin{equation}\label{e.glob-PsiPsihm}
  \| (\Psi^h)^n(U^0)-(\Psi_{m(h)}^h)^n(U^0)\|_{\kY} = 
\kO( m^{-\ell } )
\end{equation}
uniformly for initial data $U^0$ satisfying \eqref{e.condPhiGlobal}.

 We first establish the required regularity of the numerical trajectory of the projected system:
To bound the $\kY_{\ell}$-norm of the Galerkin truncated numerical
trajectory $(\Psi^h_{m(h)})^n(U^0)$ note that for $m=m(h) = h^{-p/(p+1)}$, 
$nh\leq T$, $h \in [0,h_*]$, with $h_*$   small enough such that $m(h_*) \geq m_*$, we have
\begin{align}
 \|(\Psi_{m(h)}^h)^n(U^0)\|_{\kY_\ell}
&\leq  \|(\Psi^h_m)^n(U^0) - \Phi^{nh}_m(U^0) \|_{\kY_\ell }+ \| \Phi^{nh}_m(U^0) \|_{\kY_\ell } \notag\\
&\leq  m^\ell \|(\Psi^h_m)^n(U^0) - \Phi^{nh}_m(U^0) \|_{\kY }+r_\phi \notag\\
&\leq m^\ell (C\e^{\sigma_\Psi T }m^{p+1-\ell} h^{p}) +r_\phi 
\leq C\e^{\sigma_\Psi T }+r_\phi
\leq r_\psi
\label{e.psim-Ell}
\end{align}
for some $r_\psi>0$. Here  $r_\phi$ is as in \eqref{e.hatC_ell} and 
we used \eqref{eqn:proj_est}  in the second line and  
\eqref{e.glob-ge_m} in the third line.

To prove \eqref{e.glob-PsiPsihm} let
\[
e^j(U^0) = (\Psi^h)^j(U^0) -(\Psi^h_m)^j( U^0)
\]
be the truncation error at time $jh \leq T$. Then for $(n+1)h\leq T$,
\begin{align}
e^{n+1}(U^0) &= (\Psi^h \circ (\Psi^h)^n)(U^0) -(\Psi^h\circ
(\Psi^h_m)^n)( U^0) \notag \\
& + (\Psi^h \circ (\Psi^h_m)^n)(U^0) -(\Psi^h_m\circ
(\Psi^h_m)^n)( U^0).
\label{e.en}
\end{align}
By Theorem \ref{l.reg-proj-num-method},  with  $r$  replaced by
$2r_\psi$  (and consequently $R$ by $4r_\psi\Lambda$, see \eqref{e.R*})  we have 
\begin{equation}
\label{e.dPsimhatDEll}
\Psi_m \in \kCb^1(\kB_{0}^{2r_\psi}; \kY).
\end{equation}
 By \eqref{e.DPsimEst}, with $\Psi_m$ replaced by $\Psi$ and the supremum taken over $\kB_{0}^{2r_\psi}$, using \eqref{e.dPsimhatDEll}
we get from \eqref{e.en} for $n\geq 1$,  $h\in [0,h_*]$ and  $(n+1)h\leq T$ that 
\begin{align}
  \|e^{n+1}(U^0) \|_{\kY}
&\leq   \sup_{\theta \in [0,1]  } 
 \| \D\Psi^h( (\Psi^h_m)^n   + \theta ((\Psi^h)^n  -(\Psi^h_m)^n )(U^0) )\|_{ \kY\to \kY}\|  e^n(U^0)\|_{\kY} \notag \\
&
+ \| e^1((\Psi^h_m)^n(U^0))\|_{\kY}  \notag \\
 &\leq    \sup_{\|U\|_{\kY_\ell} \leq  2r_\psi  } 
 \| \D\Psi^h(U)\|_{ \kY\to \kY}\|  e^n(U^0)\|_{\kY} 
+ \| e^1((\Psi^h_m)^n(U^0))\|_{\kY}   \notag\\
  & \leq   (1+\sigma_\Psi h)\|  e^n(U^0)\|_{\kY} + h \kO(m^{-\ell}),
\label{e.enRecursion}
\end{align}
where $m=m(h)$, 
with order constant uniformly in all $U^0$ satisfying \eqref{e.condPhiGlobal},  as long as 
\begin{equation}
\label{e.psimPsiTheta}
(\Psi^h_m)^n(U^0)  + \theta ((\Psi^h)^n(U^0) -(\Psi^h_m)^n(U^0) ) \in
\kB_0^{ 2r_\psi}, \quad \theta \in [0,1].
\end{equation}
Here we used that for $U \in \P_m\kY$,
\[
e^1(U) = h\b^T(\id-h\a A)^{-1}\left( \left( \P_m( B(W(U,h))-B(W_m(U,h)) \right) + \Q_m B(W(U,h)) \right),
\]
so that for $U \in \kB_{\ell}^{r_\psi} \cap \P_m\kY$, $h\in [0,h_*]$, by
\eqref{e.wwm} (with $r$ replaced by   $r_\psi$ and $R$ by $2r_\psi\Lambda$)
\begin{align} \| e^1(U)\|_{\kY }
  & \leq  h \|\b\| \Lambda ( M'\|W(U,h)-W_m(U,h)\|_{\kY^s}  +\|\Q_m B(W(U,h))\|_{\kY^s})  \notag \\
  & \leq  h \|\b\|( \Lambda M'\kO(m^{-\ell}) + \kO(m^{-\ell}) ) =  h
  \kO(m^{-\ell}),
\label{e.e1Est}
\end{align}
where $m=m(h)$ and  $M'=M_0'[2r_\psi\Lambda]$. In the last inequality of \eqref{e.e1Est} we used that 
\begin{equation}
\label{e.BmW}
\|\Q_m B(W(U,h))\|_{\kY^s}  \leq m^{-\ell} M = \kO( m^{-\ell}),
\end{equation}
where  $M=M_0[2r_\psi\Lambda]$.

From \eqref{e.enRecursion} we deduce for $nh\leq T$, $h\in [0, h_*]$ and all $U^0$ satisfying \eqref{e.condPhiGlobal} that
\begin{align}
  \|   e^n(U^0)\|_{\kY} &\leq (1+\sigma_\Psi h)^{n-1}\|  e^1(U^0)\|_{\kY} +  
  \frac{1}{\sigma_\Psi h}\left((1+\sigma_\Psi h)^{n-1}-1\right)h\kO(m^{-\ell})
  \notag\\
  &\leq    \exp(\sigma_\Psi T)(\|  e^1(U^0)\|_{\kY} + \kO(m^{-\ell })) 
  = \kO(m^{-\ell}),\label{e.enEst}
\end{align}
with $m=m(h)$.
Here \eqref{e.e1Est} does not apply to $\| e^1(U^0)\|_\kY $ because in general $U^0 \notin \P_m \kY$.
But from  \eqref{e.psipsim}  we see 
 that $\| e^1(U^0)\|_\kY = O(m^{-\ell})$.
By choosing a possibly bigger $m_*$ (and, by virtue of $m=h^{-p/(p+1)}$, a smaller $h_*$)
  we can achieve that
$\|e^n(U^0)\|_{\kY} \leq r_\psi$ so  that the
required condition \eqref{e.psimPsiTheta}  is
satisfied. This proves \eqref{e.glob-PsiPsihm}.

Hence, \eqref{e.globPhiGalError}, \eqref{e.glob-ge_m} and \eqref{e.glob-PsiPsihm}
prove that
\begin{equation}\label{e.EnEst}
E^n(U^0,h) = E_m^n(U^0,h) + \kO(m^{-\ell}) = \kO( h^{ p\ell/( p+1)  } )
\end{equation}
for $nh\leq T$, $h\in [0, h_*]$ and $U^0$ satisfying \eqref{e.condPhiGlobal}.
\qed


\begin{example}\label{ex:cubicNSE} (Cubic nonlinear Schr\"odinger equation in $\R^3$)
We now consider a cubic nonlinear Schr\"odinger equation in $\R^3$
\begin{equation}\label{e.NSER3}
\i u_t = \Delta u + |u|^2 u
\end{equation}
as in \cite{LubichNSE}. We rewrite it in the form \eqref{eqn:see} with $U = (u_1,u_2)$
where $u = u_1+\i u_2$ with  
\[
A= \left(\begin{array}{cc}
0  &\Delta\\
-\Delta & 0\\
\end{array}
\right),\quad\mbox{and}\quad
 B(U ) = (u_1^2 + u_2 ^2){u_2 \choose -u_1},
\]
cf.~also Example \ref{ex:NSE}, 
and consider it on   $\kY=\kH_{2}(\R^3;\R^2)$.  
By  Lemma \ref{l.nonlinearityH^l} a) the nonlinearity  $B(U)$ is analytic on $\kY$ 
and the same  holds true on 
$\kY_\ell= D(A^\ell) =\kH_{2(\ell+1)}(\R^3,\R^2) $
where $\ell\geq 0$. In this case assumption (B) holds
for $I=[0, L]$ and any $L>0$. If  \eqref{e.NSER3} is discretized by the implicit mid point rule  and $U^0 \in \kY_1=\kH_4$, 
 then from 
Theorem \ref{thm:ns_convergence}  we obtain 
an order of convergence $\kO(h^{2/3})$ in the
$\kH_2$-norm.  In \cite{LubichNSE} a second order Strang type time discretization is used 
to discretize  \eqref{e.NSER3} and a better rate of convergence is observed, namely
an order of convergence $\kO(h)$ in the
$\kH_2$-norm for $U^0 \in \kH_4$.  This is due to the fact that the linear part of the 
evolution equation  \eqref{eqn:see}, i.e., 
$\dot U = AU$, is integrated exactly by this method. We plan to extend the methods of this paper
to splitting and exponential integrators in future work.
\end{example}

\section{Appendix: Trajectory error on  general domains}
\label{s.appendix}
In this appendix we show how to extend the results of this paper to more general domains.
We make the following assumption for the nonlinearity $B(U)$ of the semilinear evolution equation
\eqref{eqn:see}:
 
\begin{enumerate}[(B1)]
\item There exists  $L\geq 0$, $I \subseteq [0,L]$,
$0,L \in I$,
$N\in \N$, $N >\lceil L\rceil  $ and a  nested collection of open,
  $\kY_\ell$-bounded sets $\kD_\ell \subset \kY_\ell$, $\ell \in I$, such that $B\in
  \kCb^{N- \lceil \ell \rceil }(\kD_\ell;\kY_\ell)$ for
 $\ell\in I$.
  \label{enum:B1}
\end{enumerate}
   Similarly as before we denote the supremum  of
$B:\kD_\ell\to\kY_\ell$ as $M_\ell$ and the supremum of its   derivative as
 $M'_\ell$,  and set $M =M_0 $, $M' =M'_0$ and $\kD=\kD_0$.  
  
 The right hand side of the evolution equation  \eqref{eqn:see} is bounded in the $\kY$ norm for 
$U \in  \kD \cap D(A) $ and it is well-known that there exists a  differentiable solution $\Phi^t(U) \in \kY$ in this case, see \cite{P83} and Theorem \ref{t.semiflow-gen} below.  Extending this setting  we will in this section consider initial data $U^0 \in \kY_\ell$ with $\ell \in J^-$ defined as follows: 
\begin{equation}\label{e.I-Gen}
J^-:= \{ \ell \in J: \ell -k\in I, k =1,\ldots, \lfloor\ell \rfloor\}, \quad\mbox{where}\quad   J = I \cup [L, L+1],
\end{equation}
 similarly as in  \eqref{e.I}.
%
For our main result, Theorem \ref{thm:ns_convergence-gen} below, we  need an additional condition on the nonlinearity $B$ of  \eqref{eqn:see}.
\begin{enumerate}[(B2)]
\item 
$B: \kD_{[\ell-1]^+}  \cap \kB_\ell^{ R}\to \kY_\ell$
is bounded for any $\ell>0$   with $\ell\in J^-$ and any $R>0$.
\label{enum:B'}
\end{enumerate}

Here we define $[x]^+ = \max(x,0)$ for $x\in \R$.
Assumption (B2) is often satisfied for superposition operators, see 
Lemma  \ref{l.nonlinearityH^l} b), c) and  in particular Example  \ref{ex:SWEnonAnalytic} where
the potential $V$ of the semilinear wave equation is only defined on an open subset $D$ of $\R$. 

For a subset $\kU$ of some Hilbert space  $\kY$ and $\delta>0$ we denote  by
\[
\kU^\delta = \bigcup_{u \in \kU} \kB_\kY^\delta(u)
\]
a $\delta$-neighbourhood of $\kU$.
Moreover  for any  subset
 $\kU_\ell$ of $\kY_\ell$, $\ell \in I$, we define $(\kU_\ell)^\delta_\ell = \kU_\ell^\delta$
 as a $\delta$-neighbourhood of $\kU_\ell$ in $\kY_\ell$.
In the following let $ \kU_\ell \subseteq \kD_\ell$, $\ell \in I$, be a nested
collection of open sets and $\delta>0$ be such that
\begin{equation}\label{e.U_ell}
\kU_\ell^{\delta}\subseteq \kD_\ell, \quad \ell \in I.
\end{equation}
We will also frequently use the abbreviation
\begin{equation}\label{e.hatU}
   \widehat\kU_{\ell}  :=\kU_{[\ell-1]^+}  \cap \kB_\ell^{R}
\end{equation}
for $\ell \in J$.
  
To extend Theorem \ref{t.semiflow} (and also Theorem \ref{thm:num_method_regularity}, see below) to general domains we cover the domain
$\kU_\ell$ with open balls of radius $\delta$ and apply the corresponding theorems on  each ball. To ensure uniformity 
of the maximal
time interval of existence $T_*$ we consider initial data  in $(\widehat\kU_\ell)^{\delta/2}_0$.

\begin{theorem}[Regularity of the semiflow on general domains]\label{t.semiflow-gen} 
Assume   (A) and (B1) and  choose 
 $\ell >0$. Then
  there is $T_*>0$ such that 
\begin{subequations}
  \begin{equation}
    \Phi^t\in \kCb^{N}((\widehat\kU_\ell)^{\delta/2}_0;\kD) 
   \label{e.PhiUDeriv-gen}
  \end{equation}
with uniform bounds in  $t\in [0,T_*]$. Moreover if $\ell \in J^-$ and 
 $k\in \N_0$ satisfies
$k \leq \ell$,  then
\begin{equation}
 \Phi(U) \in \kCb^k([0,T_*];\kD) 
 \label{eqn:semiflow_regt-gen}
\end{equation}
with uniform bounds in $U \in \widehat\kU_\ell$.
\label{e.semiflow_reg-gen}
\end{subequations}
  The bounds on $T_*$ and  $\Phi$,   depend only on $\delta$ from \eqref{e.U_ell}, $R$ from 
  \eqref{e.hatU},  $\omega$ from \eqref{eqn:semigroup_bound}, and those afforded by
  assumption (B1).
  \end{theorem}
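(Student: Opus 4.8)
The plan is to deduce this from Theorem~\ref{t.semiflow} by a localization/covering argument, the only genuinely new issue being uniformity of the existence time and of the bounds over the (non-compact) domain $\widehat\kU_\ell=\kU_{[\ell-1]^+}\cap\kB_\ell^R$. Since the sets $\kU_\ell$ are no longer balls centered at the origin, I would cover $\widehat\kU_\ell$ by the $\kY$-balls $\kB_\kY^{\delta/2}(U_*)$ with $U_*\in\widehat\kU_\ell$, so that $(\widehat\kU_\ell)^{\delta/2}_0=\bigcup_{U_*}\kB_\kY^{\delta/2}(U_*)$. By \eqref{e.U_ell} and nestedness of the $\kD_\ell$, the enlarged ball $\kB_\kY^{\delta}(U_*)$ is contained in $\kU_{[\ell-1]^+}^{\delta}\subseteq\kD_{[\ell-1]^+}\subseteq\kD$, so on each such ball assumption (B1) supplies precisely the bounds on $B$ and its derivatives (with $M=M_0$, $M'=M_0'$ on $\kD$) that assumption (B) provided on balls around the origin in the proof of Theorem~\ref{t.semiflow}.

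On a fixed ball I would run the contraction-mapping argument of Theorem~\ref{t.semiflow} for the operator $\Pi$ of \eqref{eqn:see_mild} on the scale $\kZ_{\ell'}=\kCb([0,1];\kY_{\ell'})$, but now on the ball $\kB^{\delta}_{\kZ}(U_*)$ centered at the constant function $U_*$ and with initial data in $\kB_\kY^{\delta/2}(U_*)$. The key estimate is that the fixed point stays inside $\kB_\kY^{\delta}(U_*)\subseteq\kD$: for $W\in\kB^{\delta}_{\kZ}(U_*)$ one writes $\Pi(W,U,T)(t)-U_*=e^{tTA}(U-U_*)+(e^{tTA}U_*-U_*)+\int_0^t e^{T(t-\tau)A}B(W(\tau))\,\d\tau$, where the first term is $\leq e^{\omega T}\delta/2$, the last is $\leq Te^{\omega T}M$, and the middle term is the crucial one. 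Because every center satisfies $U_*\in\kB_\ell^R$, the normality of $A$ and the spectral calculus give the quantitative drift bound $\norm{e^{sA}U_*-U_*}{\kY}\leq C\,s^{\min(\ell,1)}R$ for $s=tT\leq T_*$, which is uniform in $U_*\in\widehat\kU_\ell$; for $T_*$ small this is $\leq\delta/4$ and $\Pi$ is a contraction with constant $\leq Te^{\omega T}M'<1$. This yields a local semiflow on $\kB_\kY^{\delta/2}(U_*)$ of class $\kCb^{N}$ in the initial datum, with existence time $T_*$ and $\kCb^{N}$-bounds independent of the center $U_*$.

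Uniqueness of the mild solution then lets the local semiflows be glued into a single $\Phi^t$ on $(\widehat\kU_\ell)^{\delta/2}_0$, and the $U_*$-independent bounds give \eqref{e.PhiUDeriv-gen}. For the time regularity \eqref{eqn:semiflow_regt-gen} with $\ell\in J^-$ and $k\leq\ell$ I would argue exactly as in Theorem~\ref{t.semiflow}: since $\ell\in J^-$ forces $\ell-j\in I$ for $j=1,\dots,\lfloor\ell\rfloor$, the same ball-centered fixed-point argument runs on each scale $\kY_{\ell-j}$ (using $\kU_{\ell-j}^{\delta}\subseteq\kD_{\ell-j}$ and $B\in\kCb^{N-\lceil\ell-j\rceil}(\kD_{\ell-j};\kY_{\ell-j})$ from (B1), where $N-\lceil\ell-j\rceil\geq1$), placing $\Phi(U)$ in $\kCb([0,T_*];\kY_{\ell-j})$ with uniform bounds; implicit differentiation of the identity $\Pi(W(U,T),U,T)=W(U,T)$ in $T$ then produces the $t$-derivatives of $\Phi(U)$ up to order $k$ in $\kY$, uniformly in $U\in\widehat\kU_\ell$.

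I expect the uniformity of $T_*$ and of the $\kCb^{N}$-bounds over the non-compact set $\widehat\kU_\ell$ to be the main obstacle; this is exactly what forces the centers into the $\kY_\ell$-bounded factor $\kB_\ell^R$ and makes the quantitative semigroup-drift estimate for $e^{sA}U_*-U_*$ the crux of the argument (strong continuity alone is not uniform on $\kY$-balls). A secondary delicate point is the top scale when $\ell>L$, where $B$ is not controlled on $\kY_\ell$ itself; there I would keep the nonlinear contribution on the scales $\kY_{\ell-j}$, $j\geq1$ (all lying in $I$), while the linear flow $e^{tTA}U$ with $U\in\kB_\ell^R$ carries the remaining $\ell$-th order regularity directly.
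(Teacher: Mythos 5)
Your proposal is correct and follows essentially the same route as the paper: the paper also runs the contraction argument for $\Pi$ from \eqref{eqn:see_mild} on balls $\kB^{\delta}_{\kZ}(U^0)\times\kB^{\delta/2}_{\kY}(U^0)$ centered at points of $\widehat\kU_\ell$, and your key drift estimate $\norm{(\e^{sA}-\id)U_*}{\kY}\leq C s^{\min(1,\ell)}\norm{U_*}{\kY_\ell}$ is precisely the paper's Lemma~\ref{l.etA-epsilon} (proved there by splitting with $\P_m,\Q_m$, $m=1/T$), which is likewise identified as the point where the $\kY_\ell$-bound on the centers enters. Your treatment of the time derivatives via the scales $\kY_{\ell-j}\in I$, $j\geq 1$, together with the observation that the top scale $\ell\in(L,L+1]$ is only needed for the linear terms carried by $U\in\kB_\ell^R$, matches the paper's intended ``obvious modifications'' of the proof of Theorem~\ref{t.semiflow}.
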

  \begin{proof}
  The proof is a  modification of the proof of Theorem \ref{t.semiflow}.
   Here we  let $U^0 \in \widehat\kU_\ell$ and take $R=\delta$. As before we  compute $W$ as
  fixed point of the map $\Pi$ from \eqref{eqn:see_mild},  but this time we consider $\Pi$ as map from $\kB^{\delta}_{\kZ}(U^0) \times \kB^{\delta/2}_\kY(U^0)\times [0,T_*]$ to $
  \kZ$ noting that by \eqref{e.U_ell} we have  $\kB^{\delta}_{\kY}(U^0)\subseteq \kD$.
   Then  \eqref{e.ProblemTerm} becomes
   \begin{equation}\label{e.ProblemTermNew}
     \|\Pi(W,U,T) - U^0 \|_{\kZ} \leq
\max_{\tau \in [0,1]} \|\e^{\tau TA} (U-U^0)\|_{\kY}  +\max_{\tau \in [0,1]}\|(\e^{\tau TA} -1)U^0\|_{\kY}
 + T \e^{\omega T} M
   \end{equation}
and, for $0<\epsilon\leq \min(1,\ell)$,  we estimate the additional term as follows
\[
 \max_{\tau \in [0,1]}   \|(\e^{\tau T A}-\id)   U^0\|_{\kY}
 \leq 
  \max_{\tau \in [0,1]}  \|(\e^{\tau T A}-\id) \|_{\kY_\epsilon\to \kY}
    \|U^0\|_{\kY_\epsilon} 
    \leq c T^\epsilon \|U^0\|_{\kY_\epsilon} 
 \]
   uniform in 
$U^0 \in \widehat\kU_{\ell}$. Here we have used Lemma
  \ref{l.etA-epsilon} below and that $\|U^0\|_{\kY_\epsilon}\leq \|U^0\|_{\kY_\ell}\leq R$  for  $U^0 \in \widehat\kU_{\ell}$.  Hence for $T_*>0$ sufficiently,  $\Pi(\cdot, U,T)$ maps $\kB^{\delta}_{\kZ}(U^0)$  into itself and, similarly as before, $W\in
  \kCb(\kB^{\delta/2}_{\kY}( U^0) \times [0,T_*];\kB^{\delta}_{\kZ}( U^0))$
with $N$ derivatives in the first component  and uniform bounds in 
$U^0 \in \widehat\kU_{\ell}$.  This proves \eqref{e.PhiUDeriv-gen}.

 Note that the term $ \|(\e^{\tau T A}-\id)  U^0\|_{\kY}$ in
  \eqref{e.ProblemTerm} can not be made small uniformly in $U \in \kU_0$
  since the operator $\e^{tA}$ is not uniformly continuous
  in $t$. But we can make that term order $\kO(T^\epsilon)$ uniformly
  in $U^0 \in \widehat\kU_\ell$ due to Lemma \ref{l.etA-epsilon} below.
 
 The proof of \eqref{eqn:semiflow_regt-gen} is similar to the analogous result \eqref{e.semiflow_reg} on balls, with obvious modifications. 
  \end{proof}

 The following lemma was needed in the proof:
\begin{lemma}
  \label{l.etA-epsilon}
  Assume (A). Then for every $T_*>0$ there is
  some $c>0$ such that for $\epsilon \in [0,1]$, $T\in [0,T_*]$,
 \begin{equation}\label{e.etA-epsilon}
    \| \e^{TA}-\id \|_{\kY_\epsilon\to \kY} \leq c T^\epsilon.
  \end{equation}
\end{lemma}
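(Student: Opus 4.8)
The plan is to exploit the normality of $A$ (assumption (A)) through the spectral theorem and thereby reduce the operator bound \eqref{e.etA-epsilon} to an elementary scalar inequality. Since $A$ is normal it carries a projection-valued spectral measure $E$ supported on $\spec(A)\subseteq\{z\in\C:\Re z\leq\omega\}$, and any bounded Borel function $g$ of $A$ acts as $g(A)=\int g(\lambda)\,\d E(\lambda)$. In particular $\e^{TA}-\id$ commutes with the \emph{orthogonal} spectral projections $\P$ (onto $\spec(A)\cap\kB^1_\C(0)$, i.e.\ $|\lambda|\leq1$) and $\Q=\id-\P$, and the ranges of $\P$ and $\Q$ are mutually orthogonal and $\e^{TA}$-invariant. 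Hence for $U\in\kY_\epsilon=D(|A|^\epsilon)$ I would start from the Pythagorean identity
\begin{equation*}
\norm[2]{(\e^{TA}-\id)U}{\kY}=\norm[2]{(\e^{TA}-\id)\P U}{\kY}+\norm[2]{(\e^{TA}-\id)\Q U}{\kY}.
\end{equation*}

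The heart of the argument is the scalar estimate $|\e^{z}-1|\leq c_0\,|z|^{\epsilon}$, valid for all $\Re z\leq\omega_*:=T_*\max(\omega,0)$ and $\epsilon\in[0,1]$ with a constant $c_0$ independent of $\epsilon$. This I would obtain by interpolating two trivial bounds: first $|\e^{z}-1|\leq\e^{\Re z}+1\leq\e^{\omega_*}+1$, and second $|\e^{z}-1|=\bigl|z\int_0^1\e^{tz}\,\d t\bigr|\leq|z|\max(1,\e^{\omega_*})$. Writing $|\e^{z}-1|=|\e^{z}-1|^{1-\epsilon}\,|\e^{z}-1|^{\epsilon}$ and inserting the first bound into the first factor and the second into the second factor gives the claim with $c_0=\max(\e^{\omega_*}+1,\e^{\omega_*},1)$. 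Setting $z=T\lambda$ with $\Re\lambda\leq\omega$ and $T\in[0,T_*]$ then yields $|\e^{T\lambda}-1|\leq c_0\,T^{\epsilon}|\lambda|^{\epsilon}$ for every $\lambda\in\spec(A)$.

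It then remains to feed this into the two spectral integrals. On $\range\P$ one has $|\lambda|\leq1$, so $|\lambda|^{\epsilon}\leq1$ and
\begin{equation*}
\norm[2]{(\e^{TA}-\id)\P U}{\kY}=\int_{|\lambda|\leq1}|\e^{T\lambda}-1|^{2}\,\d\ip{E(\lambda)U,U}{\kY}\leq c_0^{2}T^{2\epsilon}\,\norm[2]{\P U}{\kY}.
\end{equation*}
On $\range\Q$ one has $|\lambda|>1$ and the factor $|\lambda|^{\epsilon}$ is retained, giving
\begin{equation*}
\norm[2]{(\e^{TA}-\id)\Q U}{\kY}\leq c_0^{2}T^{2\epsilon}\int_{|\lambda|>1}|\lambda|^{2\epsilon}\,\d\ip{E(\lambda)U,U}{\kY}=c_0^{2}T^{2\epsilon}\,\norm[2]{|A|^{\epsilon}\Q U}{\kY}.
\end{equation*}
Adding the two and recognising the right-hand side via the definition \eqref{eqn:inner_prod} of the $\kY_\epsilon$-inner product, namely $\norm[2]{\P U}{\kY}+\norm[2]{|A|^{\epsilon}\Q U}{\kY}=\norm[2]{U}{\kY_\epsilon}$, delivers \eqref{e.etA-epsilon} with $c=c_0$, after taking square roots.

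The only genuine subtlety I anticipate is \emph{uniformity in $\epsilon\in[0,1]$}: I must ensure the interpolation constant does not degenerate as $\epsilon\to0$ or $\epsilon\to1$, which is exactly why I collapse $(\e^{\omega_*}+1)^{1-\epsilon}\max(1,\e^{\omega_*})^{\epsilon}$ into a single $\epsilon$-free bound $c_0$. The second point worth care is that the low-frequency block is controlled by $\norm{\P U}{\kY}$ \emph{without} the weight $|A|^{\epsilon}$; this is precisely why the cut at $|\lambda|=1$ must match the splitting in \eqref{eqn:inner_prod}, so that the two spectral integrals reassemble into the $\kY_\epsilon$-norm. Everything else is the routine bookkeeping of the functional calculus for the normal operator $A$, and the dependence of $c$ on $T_*$ and $\omega$ is as claimed.
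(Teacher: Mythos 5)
Your proof is correct, and it reaches \eqref{e.etA-epsilon} by a genuinely different route than the paper. The paper argues at the operator level: it splits $\e^{TA}-\id$ with the spectral projections $\P_m$, $\Q_m$ at a \emph{$T$-dependent} cutoff $m=1/T$, writes $\e^{TA}-\id=\int_0^T A\e^{tA}\,\d t$ on the low modes to get $\|\P_m(\e^{TA}-\id)\|_{\kY_\epsilon\to\kY}\leq \e^{T\omega}\,T\,m^{1-\epsilon}$, bounds the high modes by $(1+\e^{T\omega})\|\Q_m\|_{\kY_\epsilon\to\kY}\leq(1+\e^{T\omega})\,m^{-\epsilon}$, and the choice $m=1/T$ balances the two terms to yield $(1+2\e^{T\omega})T^\epsilon$; only the derived estimates \eqref{eqn:semigroup_bound}, \eqref{e.A_ell} and \eqref{eqn:proj_est} are invoked, never the functional calculus itself. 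You instead do the balancing pointwise on the spectrum: the multiplicative interpolation $|\e^z-1|=|\e^z-1|^{1-\epsilon}|\e^z-1|^{\epsilon}\leq c_0|z|^{\epsilon}$ (with your $\omega_*=T_*\max(\omega,0)$ and $c_0=\e^{\omega_*}+1$) produces the fractional power at each $\lambda\in\spec(A)$ separately, so no $T$-dependent cutoff or term-balancing is needed, and your only splitting, at $|\lambda|=1$, serves purely to reassemble the $\kY_\epsilon$-norm from its definition \eqref{eqn:inner_prod}. Both proofs rest on normality; yours uses the full spectral theorem and buys a manifestly $\epsilon$-uniform explicit constant, whereas the paper's stays inside the small toolkit of operator estimates it reuses throughout the text. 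It is worth observing that your pointwise-on-the-spectrum style is exactly how the paper proves the companion resolvent estimate, Lemma~\ref{l.Lambda-epsilon}, where the Runge--Kutta matrix $\a$ forces a scalar analysis of $\lambda^{-\epsilon}((1-h\mu\lambda)^{-1}-1)$; your argument shows the semigroup case can be handled in the same spirit.
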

\begin{proof}
We have with $m(T) =  1/T$
\begin{align*}
 \| \e^{TA}-\id \|_{\kY_\epsilon\to \kY} &  \leq 
 \| \P_m (\e^{TA}-\id) \|_{\kY_\epsilon\to \kY}  +  \| \Q_m (\e^{TA}-\id) \|_{\kY_\epsilon\to \kY}\\
& \leq  \| \P_m \int_0^T A \e^{tA} dt  \|_{\kY_\epsilon\to \kY} +  \| \Q_m (\e^{TA}-\id) \|_{\kY_\epsilon\to \kY}\\
&  \leq  \e^{T\omega} \| \P_m T A  \|_{\kY_\epsilon\to \kY}  
+(1+ \e^{T\omega})  \| \Q_m  \|_{\kY_\epsilon\to \kY}\\
&  \leq  \e^{T\omega} T m^{1-\epsilon}
+(1+ \e^{T\omega}) m^{-\epsilon} =  (1+2 \e^{T\omega}) T^{\epsilon}.
\end{align*}
Here we used \eqref{eqn:semigroup_bound} in the third line,  \eqref{eqn:proj_est}  in the last line
and we estimated, using \eqref{e.A_ell}, that
\[
 \| \P_m  A  \|_{\kY_\epsilon\to \kY}  \leq \| \P_m  A^{1-\epsilon} \|_{\kY\to\kY }  \|A^\epsilon\|_{\kY_\epsilon\to\kY } \leq m^{1-\epsilon}.
\]
\end{proof}

  
  \begin{theorem}[Regularity of numerical method on general domains]
  Assume   (A),  (B1),  (RK1) and (RK2) and let
 $\ell >0$.  
 Then there is $h_*>0$  such that
  \begin{subequations}
\begin{equation}\label{eqn:glob-U-num_method_regularity-gen}
    W^i(\cdot,h), \Psi(\cdot,h) \in \kCb^{N} ((\widehat\kU_{\ell})^r_0;\kD),
  \end{equation}
with uniform bounds in  $h \in [0,h_*]$.
Here  $r=r(\delta)$ is as in \eqref{e.R*}.
 Furthermore, for
  $\ell \in J^-$, $k\in \N_0 $, $k \leq \ell$,
we have for $i=1,\ldots, s$,
\begin{equation}\label{eqn:glob-num_method_regularity-gen}
    W^i(U,\cdot), \Psi(U,\cdot) \in \kCb^k( [0,h_*];\kD)
  \end{equation}
\end{subequations}
  with uniform bounds in $U \in\widehat\kU_\ell$. 
  The bounds on   $h_*$, $\Psi$ and  $W$   depend only on $\delta$ from \eqref{e.U_ell}, 
  $R$ from   \eqref{e.hatU}, \eqref{e.lemShA}, those afforded by assumption (B1) and
 on $\a$, $\b$  as specified by the
  numerical method.
  \label{thm:num_method_regularity-gen}
\end{theorem}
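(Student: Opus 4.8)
The plan is to transcribe the proof of Theorem~\ref{thm:num_method_regularity} (the corresponding statement on balls) to the present setting, combining it with the centering device used in the proof of Theorem~\ref{t.semiflow-gen}. The only structural change is that the fixed-point ball for the stage vector must now be centered at $\1 U^0$ with $U^0\in\widehat\kU_\ell$, rather than at the origin; this introduces an extra term which, unlike in the ball case, does not vanish uniformly in $U^0$ as $h\to 0$, and which will be controlled using the $\kY_\ell$-regularity of $U^0$.

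\emph{Step 1 (smoothing estimates).} First I would establish the analogues of Lemma~\ref{l.etA-epsilon} for the two operators occurring in \eqref{e.Pi-W} and \eqref{e.psi}: for a suitable $c>0$ and all $\epsilon\in[0,1]$, $h\in[0,h_*]$,
\[
  \|(\id-h\a A)^{-1}-\id\|_{\kY_\epsilon^s\to\kY^s}\leq c\,h^\epsilon,
  \qquad
  \|\sS(hA)-\id\|_{\kY_\epsilon\to\kY}\leq c\,h^\epsilon .
\]
Since $A$ is normal, the functional calculus reduces these to scalar bounds in the variable $z=h\lambda$, $\lambda\in\spec A$. Writing $(\id-z\a)^{-1}-\id=(\id-z\a)^{-1}z\a$ and using \eqref{eqn:stability_function}, both $\|(\id-z\a)^{-1}z\a\|$ and $|\sS(z)-1|$ are $\kO(|z|)$ as $z\to0$ (because $\a$ is invertible by (RK2) and $\sS(0)=1$), while they stay bounded for large $|z|$ on the relevant half-plane $\{\Re z\leq h_*\omega\}$ by (RK1), (RK2) and \eqref{e.Lambda}; interpolating these two regimes gives the bound $\kO(|z|^\epsilon)$, i.e. the displayed $\kO(h^\epsilon)$ estimates.

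\emph{Step 2 (fixed point centred at $U^0$).} Fix $U^0\in\widehat\kU_\ell$ and put $R=\delta$ and $r=\delta/(2\Lambda)$ as in \eqref{e.R*}. I would run the contraction argument for $\Pi$ from \eqref{e.Pi-W}, now viewed as a map $\Pi:\kB^{\delta}_{\kY^s}(\1 U^0)\times\kB^{r}_{\kY}(U^0)\times[0,h_*]\to\kY^s$; by \eqref{e.U_ell} every component of such a $W$ lies in $\kB^{\delta}_{\kY}(U^0)\subseteq\kD$, so $B(W)$ is defined and bounded by $M$. The decomposition
\[
  \Pi(W,U,h)-\1 U^0=\bigl[(\id-h\a A)^{-1}-\id\bigr]\1 U^0+(\id-h\a A)^{-1}\1(U-U^0)+h\a(\id-h\a A)^{-1}B(W)
\]
is then estimated term by term: the first summand is $\leq cR\,h^\epsilon$ by Step~1 with $0<\epsilon\leq\min(1,\ell)$ and $\|U^0\|_{\kY_\epsilon}\leq\|U^0\|_{\kY_\ell}\leq R$, the second is $\leq\Lambda r=\delta/2$, and the third is $\leq h\|\a\|\Lambda M$, so $\Pi(\cdot,U,h)$ maps the $\delta$-ball into itself for $h_*$ small. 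The contraction estimate $\|\D_W\Pi\|\leq h\|\a\|\Lambda M'<1$ is exactly as in the ball case. The contraction mapping theorem with parameters produces $W$ with $N$ derivatives in $U$, and $\Psi$ follows from \eqref{e.psi}, its range being kept inside $\kD$ by the same computation together with the $\|\sS(hA)-\id\|$ bound of Step~1. All constants depend only on $\delta$, $R$, $\omega$, \eqref{e.lemShA}, $\a$, $\b$ and the bounds of (B1), hence are uniform in $U^0\in\widehat\kU_\ell$; covering $(\widehat\kU_\ell)^r_0$ by the balls $\kB^r_{\kY}(U^0)$ then yields \eqref{eqn:glob-U-num_method_regularity-gen} with uniform bounds.

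\emph{Step 3 ($h$-derivatives) and the obstacle.} For $\ell\in J^-$ and $k\leq\ell$, membership $\ell\in J^-$ lets me repeat Step~2 with $\kY$ replaced by $\kY_{\ell-j}$ and $\kD$ by $\kD_{\ell-j}$ on the levels $\ell-j\in I$ it supplies, so that $W^i$ and $\Psi$ extend to $\kCb$-maps valued in these higher spaces, the top-level $\kY_\ell$-bound on the data being furnished by $\widehat\kU_\ell\subseteq\kB_\ell^R$. Implicit differentiation of $\Pi(W,U,h)=W$ and of \eqref{e.psi}, exactly as in the proofs of Theorems~\ref{t.semiflow} and~\ref{thm:num_method_regularity} and with the obvious modifications of Theorem~\ref{t.semiflow-gen}, then trades this spatial regularity for the $h$-derivatives up to order $k$ and gives \eqref{eqn:glob-num_method_regularity-gen}. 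I expect the crux of the whole argument to be Step~1: the terms $\bigl[(\id-h\a A)^{-1}-\id\bigr]\1 U^0$ and $(\sS(hA)-\id)U^0$ are precisely what the shift of the centre from $0$ to $U^0$ creates, and they cannot be made small uniformly in $U^0\in\kU_0$ (the operators converge only strongly, not in norm, as $h\to0$); it is the gain of the factor $h^\epsilon$, paid for by the regularity $U^0\in\kY_\ell$ with $\ell>0$, that restores uniformity and makes the covering argument go through.
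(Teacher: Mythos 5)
Your proposal is correct and follows the paper's overall architecture exactly: the contraction argument for $\Pi$ from \eqref{e.Pi-W} recentred at $\1 U^0$ with $W$-ball of radius $\delta$ and $U$-ball of radius $r=\delta/(2\Lambda)$, the isolation of the problematic term $[(\id-h\a A)^{-1}-\id]\1 U^0$ (your decomposition is literally the paper's \eqref{e.Pi-Fix-Gen}), its control by an $\kO(h^\epsilon)$ smoothing estimate paid for by $\|U^0\|_{\kY_\epsilon}\leq\|U^0\|_{\kY_\ell}\leq R$, the covering of $(\widehat\kU_\ell)^r_0$ by the balls $\kB^r_\kY(U^0)$, and the trade of spatial levels $\ell-j\in I$ for $h$-derivatives via implicit differentiation. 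Where you genuinely diverge is in the proof of the key smoothing estimate, which the paper isolates as Lemma~\ref{l.Lambda-epsilon}: the paper proves it by an explicit spectral computation, reducing via normality to scalar functions $f(\lambda,\mu,h)=\lambda^{-\epsilon}((1-h\mu\lambda)^{-1}-1)$ per eigenvalue $\mu$ of $\a$, locating the poles $1/(h\mu)$, and then treating the case of non-diagonalizable $\a$ separately through a geometric-series expansion in the nilpotent part with the functions $g_j(z)=z^{j-\epsilon}/(1-z)^{j+1}$ on $\Omega=\{|z-1|\geq\delta\}$. Your Step~1 instead works directly with the matrix norm $\|(\id-z\a)^{-1}-\id\|=\|(\id-z\a)^{-1}z\a\|$, which is $\kO(|z|)$ near $z=0$ and bounded on the relevant region by the already-available resolvent bound \eqref{e.Lambda}, and then uses the elementary fact $\min(|z|,1)\leq|z|^\epsilon$ for $\epsilon\in[0,1]$; this sidesteps the Jordan-block analysis entirely (the uniform bound $\Lambda$ already encodes the matrix structure) and gives the constant uniformly in $\epsilon$ in a few lines, at the modest cost of being less self-contained at the scalar level. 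A further point in your favour: you explicitly state and use the companion estimate $\|\sS(hA)-\id\|_{\kY_\epsilon\to\kY}\leq ch^\epsilon$, which is genuinely needed to verify that $\Psi^h$ maps $(\widehat\kU_\ell)^r_0$ into $\kD$ (the term $(\sS(hA)-\id)U^0$ appears once one recentres \eqref{e.psi} at $U^0$), whereas the paper buries this under ``the rest of the proof is similar''; note it also follows at once from the resolvent version via the identity $\sS(hA)-\id=\b^T\a^{-1}[(\id-h\a A)^{-1}-\id]\1$, so no separate interpolation argument is strictly required. Finally, your reading of the domain of $\Pi$ (radius $\delta$ in the $W$-variable, radius $r$ in the $U$-variable) is the coherent one; the paper's stated domain $\kB^{r}_{\kY^s}(\1 U^0)\times\kB^\delta_{\kY}(U^0)$ appears to have the two radii transposed, since it immediately checks $\Pi(W,U,h)\in\kB^{\delta}_{\kY^s}(\1 U^0)$ and its estimate \eqref{e.Pi-Fix-Gen} needs $\Lambda\|U-U^0\|_\kY\leq\Lambda r=\delta/2$.
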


\begin{proof}
To prove  
 \eqref{eqn:glob-U-num_method_regularity}   let  $U^0 \in \widehat\kU_\ell$. 
As in  the proof of Theorem \ref{thm:num_method_regularity} we   compute $W$ as
  fixed point of the map $\Pi$ from \eqref{e.Pi-W}, but this time we consider $\Pi$ as a map from  $ \kB^{r}_{\kY^s}(\1 U^0)\times \kB^\delta_{\kY}(U^0)\times [0,h_*]$ to
   $\kY^s$ where $r=r(\delta)$ is as in \eqref{e.R*}.
   To  check that $\Pi(W,U,h) \in  \kB^{\delta}_{\kY^s}(\1 U^0)$ for sufficiently small $h_*>0$ let $0<\epsilon\leq  \min(1,\ell)$. 
   Then 
\begin{equation} 
 \|\Pi(W,U,h) - \1 U^0 \|_{\kY^s} \leq
\Lambda \| U-U^0\|_{\kY^s} +
    \|((\id-h \a A)^{-1}-\id) \1 U^0\|_{\kY^s} + h  \norm{\a}{}\Lambda M, \label{e.Pi-Fix-Gen}
 \end{equation}
 and we estimate 
 \begin{align}\label{e.ProblemTerm2}
 \|((\id-h \a A)^{-1}-\id) \1 U^0\|_{\kY^s}   &\leq \|((\id-h \a A)^{-1}-\id) \|_{\kY_\epsilon^s\to \kY^s}
    \|U^0\|_{\kY_\epsilon}   \leq c h^\epsilon \|U^0\|_{\kY_\epsilon} 
  \end{align}
  for $h\in [0,h_*]$ and $h_*$ small enough and  independent of 
$U^0 \in \widehat\kU_{\ell}$. Here we have used Lemma
  \ref{l.Lambda-epsilon} below and that $\|U^0\|_{\kY_\epsilon} 
\leq \|U^0\|_{\kY_\ell} \leq R$ for $U^0 \in \widehat\kU_{\ell}$.
The other terms of \eqref{e.Pi-Fix-Gen} are estimated as in \eqref{e.Pi-Fix}
with $R$ replaced by $\delta$.
So $\Pi$ maps $\kB^{\delta}_{\kY^s}(\1 U^0)$ to itself and is a
  contraction for $h_*$ small enough.  This proves statements
\eqref{eqn:glob-U-num_method_regularity-gen} and also
\eqref{eqn:glob-num_method_regularity-gen}  in the case $k=0$. 

  Note that the term  in
  \eqref{e.ProblemTerm2} can not made small independent of $U \in \kU_0$
  since the operator $(\id-h \a A)^{-1}$ is not uniformly continuous
  in $h$. But we can make that term order $\kO(h^\epsilon)$ uniformly
  in $U^0 \in \widehat\kU_\ell$ due to Lemma \ref{l.Lambda-epsilon} below.
 
  The rest of the proof is similar to the proof of Theorem \ref{thm:num_method_regularity}.
\end{proof}

In the proof we needed the following lemma:

\begin{lemma}
  \label{l.Lambda-epsilon}
  Assume (A), (RK1) and (RK2). Then there are
  $h_*>0$, $c>0$ such that for $\epsilon \in [0,1]$, $h\in [0,h_*]$,
 \begin{equation}\label{e.Lambda_h_epsilon}
    \|(\id-h \a A)^{-1}-\id \|_{\kY_\epsilon^s\to \kY^s} \leq c h^\epsilon
  \end{equation}
\end{lemma}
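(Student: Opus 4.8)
The plan is to mirror the proof of Lemma~\ref{l.etA-epsilon}, replacing $\e^{TA}$ by the resolvent-type operator $(\id-h\a A)^{-1}$ and splitting the frequency domain at $m=m(h)=1/h$. First I would record the algebraic identity
\[
(\id-h\a A)^{-1}-\id = h\,(\id-h\a A)^{-1}\a A = h\,\a A\,(\id-h\a A)^{-1},
\]
which follows from $(\id-h\a A)^{-1}-\id = (\id-h\a A)^{-1}[\id-(\id-h\a A)]$ and the fact that $\a A$ commutes with $(\id-h\a A)^{-1}$. I would also note that the diagonally-acting spectral projections $\P_m,\Q_m$ commute with $A$ and with the block matrix $\a$, hence with $(\id-h\a A)^{-1}$, so that one may split
\[
(\id-h\a A)^{-1}-\id = \P_m\big((\id-h\a A)^{-1}-\id\big) + \Q_m\big((\id-h\a A)^{-1}-\id\big)
\]
and estimate the two pieces separately as maps $\kY_\epsilon^s\to\kY^s$.

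For the low-frequency part I would use the identity above together with \eqref{e.Lambda} and, exactly as in the proof of Lemma~\ref{l.etA-epsilon}, the bound $\|\P_m A\|_{\kY_\epsilon\to\kY}\leq \|\P_m A^{1-\epsilon}\|_{\kY\to\kY}\,\|A^\epsilon\|_{\kY_\epsilon\to\kY}\leq m^{1-\epsilon}$, which comes from the first inequality of \eqref{eqn:proj_est} and from \eqref{e.A_ell}. Commuting $\P_m$ to sit next to $A$ and applying these bounds stagewise gives
\[
\|\P_m((\id-h\a A)^{-1}-\id)\|_{\kY_\epsilon^s\to\kY^s}\leq \Lambda\,\|\a\|\,h\,m^{1-\epsilon}.
\]
For the high-frequency part I would commute $\Q_m$ through $(\id-h\a A)^{-1}$ and use \eqref{e.Lambda} together with the third inequality of \eqref{eqn:proj_est}, namely $\|\Q_m\|_{\kY_\epsilon\to\kY}\leq m^{-\epsilon}$, to obtain
\[
\|\Q_m((\id-h\a A)^{-1}-\id)\|_{\kY_\epsilon^s\to\kY^s}\leq (\Lambda+1)\,m^{-\epsilon}.
\]
Choosing $m=1/h$ converts both right-hand sides into multiples of $h^\epsilon$ (since $h\,m^{1-\epsilon}=h^\epsilon$ and $m^{-\epsilon}=h^\epsilon$), and adding them yields \eqref{e.Lambda_h_epsilon} with $c=\Lambda\|\a\|+\Lambda+1$, valid for $h\in[0,h_*]$ with $h_*$ chosen $\leq 1$ (so that $m=1/h\geq 1$, as required by \eqref{eqn:proj_est}) and small enough for \eqref{e.Lambda} to apply.

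I do not expect a genuine obstacle here; the proof is essentially a transcription of Lemma~\ref{l.etA-epsilon}. The only points requiring care are verifying that the diagonal projections $\P_m,\Q_m$ commute with the block operator $\id-h\a A$, and keeping the $\kY_\epsilon$-to-$\kY$ versus $\kY$-to-$\kY$ operator norms straight across the $s$ stages. As in Lemma~\ref{l.etA-epsilon}, the essential idea is the splitting at $m=1/h$, which balances the $\kO(h\,m^{1-\epsilon})$ growth of the smooth low-frequency part against the $\kO(m^{-\epsilon})$ smallness of the high-frequency tail.
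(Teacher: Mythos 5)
Your proof is correct, but it takes a genuinely different route from the paper's. The paper proves this lemma by spectral calculus rather than frequency splitting: after disposing of the low part $\P=\P_1$ of $A$ (where the bound improves to $\kO(h)$), it reduces the claim for $L=(\id-\P)A$ to the scalar estimate $\sup_{\lambda\in\spec(L)}\|f(\lambda,\a,h)\|_{\C^s\to\C^s}\leq c\,h^\epsilon$ with $f(\lambda,\mu,h)=\lambda^{-\epsilon}\bigl((1-h\mu\lambda)^{-1}-1\bigr)$, proved via the substitution $z=h\mu\lambda$ and boundedness of $g(z)=z^{-\epsilon}\bigl((1-z)^{-1}-1\bigr)$ on $\{z\in\C : |z-1|\geq\delta\}$; there (RK2) is invoked directly (via $\Re\,\mu>0$ for $\mu\in\spec(\a)$) to keep the pole $1/(h\mu)$ at a fixed distance $\delta$ from $\spec(L)$, and a separate geometric-series computation is needed to handle the Jordan blocks of $\a$. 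You instead transplant the frequency-splitting proof of Lemma~\ref{l.etA-epsilon}: the identity $(\id-h\a A)^{-1}-\id=h(\id-h\a A)^{-1}\a A$, the split at $m=1/h$, the low-frequency bound $\Lambda\|\a\|\,h\,m^{1-\epsilon}$ and the high-frequency bound $(\Lambda+1)\,m^{-\epsilon}$. This is shorter and more modular: all spectral information about $\a$ and $A$ (A-stability, $\Re\spec(\a)>0$, the Jordan structure of $\a$) enters only through the single resolvent bound \eqref{e.Lambda} of Lemma~\ref{l.ShA}, used as a black box; your constant $c=\Lambda\|\a\|+\Lambda+1$ is explicit and manifestly uniform in $\epsilon\in[0,1]$; and it unifies the proofs of the two companion lemmas, which the paper treats by two distinct techniques. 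What the paper's computation buys in exchange is that it exhibits exactly where the geometry of $\spec(\a)$ relative to $\spec(A)$ matters. One small point of rigor, shared with the paper's own proof of Lemma~\ref{l.etA-epsilon}: for $\epsilon<1$ the operator $\P_m A$ on $\kY_\epsilon$ should be read as the bounded extension $A^{1-\epsilon}\P_m A^{\epsilon}$ (equivalently $A\P_m$), so it is cleanest to verify your resolvent identity on the dense subspace $\kY_1^s$ and then extend by continuity.
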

\begin{proof}
  By Lemma \ref{l.ShA} there is $h_*>0$ such that $(\id-h\a A)^{-1}$
  is bounded as map from $\kY^s$ to itself, uniformly in $h\in
  [0,h_*]$.
  Note that
  \[
  \|(\id-h \a \P A)^{-1}-\id \|_{\kY_\epsilon^s\to \kY^s} \leq c h 
  \]
  with $\P = \P_1$ as before. Due to the definition of the norm on $\kY_\ell$, see \eqref{eqn:inner_prod},
   it remains to prove
  that  
  \begin{equation}\label{e.QA_h_epsilon}
    \| f(L, \a,h) \|_{\kY^s\to \kY^s} \leq c h^\epsilon,
  \end{equation}
  where $L = (\id-\P) A$ and  
  \[
  f(\lambda,\mu, h) :=
  \lambda^{-\epsilon}((1-h \mu\lambda )^{-1}-1).
  \]
Because $L$ is normal \eqref{e.QA_h_epsilon} is equivalent to 
\begin{equation}\label{e.heps-spec}
 \sup_{ \lambda \in \spec(L)} \| f(\lambda, \a, h) \|_{\C^s\to \C^s} \leq c h^\epsilon.
 \end{equation}
  Let $\mu$ be an eigenvalue of $\a$.  We first show that
  \begin{equation}\label{e.hnu}
  \sup_{ \lambda \in \spec(L)}   | f(\lambda, \mu,h)  |  \leq c h^\epsilon.
  \end{equation} 
  Note that $\epsilon\leq 1$ and so
$0$ is  a removable singularity of $f(\cdot, \mu,h)$.  Furthermore the pole $\lambda_0(h) = 1/(h\mu)$ satisfies $\Re\lambda_0(h)>0$
because $\Re(\mu)>0$  for all $\mu \in \spec(\a)$ by  (RK2).  
  By \eqref{eqn:semigroup_bound} there is $\omega \in \R$ with
$
\Re \spec(L)   \leq \omega.
$
For $h_*>0$ sufficiently small  we have $\Re\lambda_0(h)>\omega$ 
for all $h \in [0,h_*]$ and so $\lambda_0(h) \notin \spec L$. Moreover a straightforward
computation shows that there is $\delta>0$ such that for sufficiently small $h_*>0$
\[
|1-h \mu\lambda|\geq \delta \quad\mbox{for all}\quad \lambda \in \spec(L), \mu \in \spec(\a), h \in [0,h_*].
\]
For example it is sufficient to choose 
\[
\delta < \Re \mu/|\mu|-h_*\max(\omega,0) |\mu| \quad\mbox{for all}\quad \mu \in \spec(\a).
\]
 Then 
$f(\cdot,\mu, h)$ is continuous
on $D = \{ \lambda \in \C,~|1-h\lambda\mu |\geq \delta\}$.
Now let $z= h\mu \lambda$ and define 
\[
g:\Omega\to \C, \quad\mbox{where}\quad g(z) := z^{-\epsilon}((1-z)^{-1}-1)\quad\mbox{and}\quad 
 \Omega =  \{ z \in \C, |z-1|\geq \delta\}.
 \]
Then $g:\Omega\to \C$ is continuous and   $f(\lambda, \mu, h) =   (h\mu)^{\epsilon}g(h\mu\lambda) $.
Since  $\lim_{z
    \to \infty} g(z) =0$ and a continuous function is bounded on a compact set,
$g$ is bounded on $\Omega$, uniformly in $\epsilon \in [0,1]$.  
  That proves \eqref{e.hnu}.  If $\a$ is diagonalizable then
  \eqref{e.hnu} implies \eqref{e.heps-spec} and \eqref{e.QA_h_epsilon}. 
  
  Now consider the case
  where $\a$ has Jordan blocks and $\mu$ is an eigenvalue of $\a$ with
  differing algebraic  and geometric multiplicity. 
  Let $n$  be its algebraic  multiplicity. Let $ E_\mu$ be the
generalized eigenspace of  $\a$ to the
  eigenvalue $\mu$.
  Then we can find coordinates on $ E_\mu$ such that
  \[
\a|_{E_\mu} = \a_\mu =   \mu {\bf 1} + N,
  \]
where  ${\bf 1}$ is the identity on $\C^n$ and $N$ is a nil-potent  $(n,n)$-matrix, i.e., $N^n={\bf 0}$
(the $(n,n)$ null-matrix).
   Then it is sufficient to prove
\eqref{e.heps-spec} with $\a$ replaced by $\a_\mu$ for all $\mu \in \spec(\a)$.
We have
\begin{align}
f(\lambda,\a_\mu,  h)   = \lambda^{-\epsilon} ( ( {\bf 1} - h \a_\mu \lambda)^{-1}- {\bf 1})  = 
(h\mu)^\epsilon G(z,N) 
\label{e.heps-jordan}
\end{align}
where 
\begin{align*}
G(z,N ) & =  z^{-\epsilon}\left( \left( {\bf 1} -  z \left({\bf 1} + \frac{N}{ \mu} \right) \right)^{-1} -{\bf 1} \right)  
 = \frac{1}{z^{\epsilon}
 (1-z)} \left( {\bf 1}  -   \frac{z}{ \mu(1-z)}  N  \right)^{-1} - \frac{{\bf 1} }{z^{\epsilon}}  \\
& = \sum_{j=0}^{n-1} \frac{   z^{j-\epsilon}}{\mu^j(1-z)^{j+1} }N ^j - z^{-\epsilon}{\bf 1}  
 =  \sum_{j=1}^{n-1} \frac{   g_j(z)}{\mu^j  }N ^j  + g(z) {\bf 1}
\end{align*}
and we  
set  $z=h\mu \lambda$,
 $g_j(z) := z^{j-\epsilon}/(1-z)^{j+1}$. Here we used the geometric series and  the fact that $N^n={\bf 0}$.
The functions $g_j(z):\Omega \to \C$ are
continuous and
$\lim_{z\to \infty} g_j(z) = 0$ for all $j=1,\ldots, n-1$, and the same is true for $g(z)$.  Therefore, as before
$g,g_j:\Omega \to \C$ are bounded uniformly in $\epsilon \in [0,1]$. With \eqref{e.heps-jordan} this shows \eqref{e.heps-spec} and hence   \eqref{e.Lambda_h_epsilon}.
\end{proof}

The following lemma is an adaptation of Lemma \ref{l.proj-semiflow}
 and Lemma   \ref{l.see_proj_err} to the setting considered in this
section:

\begin{lemma}[Regularity of projected semiflow and projection error on general domains]
Assume (\ref{enum:A}) and (B1), let $\delta>0$ be as in  \eqref{e.U_ell} and let  $\ell >0$. 
 Then  there is  
  $m_*\geq 0$ such that for $m\geq m_*$ there
  exists a projected semiflow $\Phi_m$  with the properties specified in Theorem \ref{t.semiflow-gen}, 
with uniform bounds in
$m\geq m_*$.
%
Moreover  choose $T>0$. Then for sufficiently large $m_*\geq 0$ the following holds:
  for all $U^0$ with
 \begin{equation}\label{e.PhiGalErrorCond-gen}
    \Phi^t(U^0) \in \widehat\kU_\ell, ~~t \in [0,T]
  \end{equation}
 and  for all $m\geq m_*$ we have
  $\Phi_m^t(U^0) \in \kD$ for $t\in [0,T]$, and
 \eqref{e.globPhiGalError} is true
  with an order constant that depends only on $\delta$, $R$ from \eqref{e.hatU}, $T$,
  \eqref{eqn:semigroup_bound} and the bounds afforded by
 (B1).
 \label{l.see_proj_err-gen}
\end{lemma}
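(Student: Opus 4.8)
The plan is to treat the two assertions separately. For the regularity of $\Phi_m$ I would run the proof of the general-domain semiflow result, Theorem~\ref{t.semiflow-gen}, for the projected equation \eqref{eqn:see_proj}; for the projection error I would mimic the balls estimate of Lemma~\ref{l.see_proj_err}. In both cases the key structural point is that the $\delta$-neighbourhood construction \eqref{e.U_ell} confines the relevant trajectories to the domain $\kD$ on which (B1) provides bounds for $B$ and $\D B$, which is what replaces the global bounds available in the balls setting.

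For the first assertion I would apply Theorem~\ref{t.semiflow-gen} to the projected equation, whose nonlinearity is $B_m=\P_m B$. Since $\|\P_m\|_{\kY_\ell\to\kY_\ell}\le 1$, the nonlinearity $B_m$ satisfies (B1) on the sets $\kD_\ell\cap\P_m\kY$ with constants no larger than those of $B$, so every bound afforded by Theorem~\ref{t.semiflow-gen} is uniform in $m$. The only additional point is that the projected initial datum must land in the correct neighbourhood: for $U^0\in\widehat\kU_\ell$ one has $\|\P_m U^0-U^0\|_\kY=\|\Q_m U^0\|_\kY\le m^{-\ell}\|U^0\|_{\kY_\ell}\le m^{-\ell}R$ by \eqref{eqn:proj_est}, so choosing $m_*$ with $m_*^{-\ell}R<\delta/2$ places $\P_m U^0$ in the $\delta/2$-neighbourhood of $\widehat\kU_\ell$ for every $m\ge m_*$. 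This yields the projected semiflow with the properties of Theorem~\ref{t.semiflow-gen}, uniformly in $m\ge m_*$.

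For the projection error I would write both $\Phi^t(U^0)$ and $\Phi^t_m(U^0)$ via the mild formulation \eqref{eqn:see_mild} and subtract, obtaining
\begin{align*}
\|\Phi^t(U^0)-\Phi^t_m(U^0)\|_\kY &\le \|\Q_m\Phi^t(U^0)\|_\kY \\
&\quad+\int_0^t\e^{\omega(t-\tau)}\|B(\Phi^\tau(U^0))-B(\Phi^\tau_m(U^0))\|_\kY\,\d\tau,
\end{align*}
where the first term is $\le m^{-\ell}R$ by \eqref{eqn:proj_est} together with \eqref{e.PhiGalErrorCond-gen}. To replace the integrand by $M'\|\Phi^\tau(U^0)-\Phi^\tau_m(U^0)\|_\kY$, with $M'=M'_0$ the global bound on $\D B\colon\kD\to\kE(\kY)$ from (B1), I must know that the segment joining $\Phi^\tau(U^0)$ and $\Phi^\tau_m(U^0)$ lies in $\kD$. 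This is exactly where the $\delta$-neighbourhood enters: since \eqref{e.PhiGalErrorCond-gen} gives $\Phi^\tau(U^0)\in\widehat\kU_\ell\subseteq\kU_0$, every point within distance $\delta$ of it lies in $\kU_0^\delta\subseteq\kD$ by \eqref{e.U_ell}, so as long as $\|\Phi^\tau(U^0)-\Phi^\tau_m(U^0)\|_\kY\le\delta$ the whole segment, and in particular $\Phi^\tau_m(U^0)$, stays in $\kD$.

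The main obstacle is that $\kD$ is a general open, $\kY$-bounded set rather than a ball, so, unlike in Lemma~\ref{l.see_proj_err}, the projected trajectory could in principle leave the domain of $B$ before time $T$. I would close this with a continuation (bootstrap) argument: set $T_m=\sup\{t\le T:\|\Phi^s(U^0)-\Phi^s_m(U^0)\|_\kY\le\delta\text{ for all }s\le t\}$. On $[0,T_m]$ the Lipschitz bound above is valid, so a Gronwall estimate yields $\|\Phi^t(U^0)-\Phi^t_m(U^0)\|_\kY\le m^{-\ell}R\,\e^{(\omega+M')t}$, which is precisely \eqref{e.globPhiGalError}. Enlarging $m_*$ so that $m_*^{-\ell}R\,\e^{(\omega+M')T}<\delta$ makes the right-hand side strictly less than $\delta$ on all of $[0,T]$; by continuity of $t\mapsto\|\Phi^t(U^0)-\Phi^t_m(U^0)\|_\kY$ and the maximality of $T_m$ this forces $T_m=T$. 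Hence the projected flow never escapes $\kD$, $\Phi^t_m(U^0)\in\kD$ for $t\in[0,T]$, and \eqref{e.globPhiGalError} holds up to $T$, with an order constant depending only on $\delta$, $R$ from \eqref{e.hatU}, $T$, \eqref{eqn:semigroup_bound} and the bounds afforded by (B1).
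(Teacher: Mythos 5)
Your proposal is correct and follows essentially the same route as the paper: the regularity part is the paper's own argument (run the general-domain contraction mapping of Theorem~\ref{t.semiflow-gen} on the projected fixed-point map, with $m_*$ chosen large enough that $\|\Q_m U^0\|_\kY\le m^{-\ell}R$ keeps the projected datum inside the $\delta$-neighbourhood structure), and the error estimate is the mild-formulation Gronwall argument of Lemma~\ref{l.see_proj_err} transplanted to general domains. The only difference is that you spell out, via the continuation time $T_m$, the bootstrap guaranteeing $\Phi^t_m(U^0)$ stays in $\kD$ up to time $T$ --- precisely the step the paper dismisses as ``obvious modifications'' --- which is a welcome clarification rather than a departure.
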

\begin{proof} The only modification required to apply Theorem \ref{t.semiflow-gen}
 is that we need to choose $m_*(\delta)\geq 0$ large enough
to be able to apply the contraction mapping theorem on $\P_m \Pi(W, \P_m U, h)$, with $\Pi$ as in \eqref{eqn:see_mild}, see \cite{OW10B}. The proof of \eqref{e.globPhiGalError}  is similar to the proof of Lemma \ref{l.see_proj_err}, with obvious modifications.
\end{proof}
 
 
\begin{lemma}[Regularity of projected numerical method
and projection error on general domains]
\label{l.reg-proj-num-method-gen}
 Assume   (A), (B1),  (RK1) and (RK2),  let $\delta>0$ be as in  \eqref{e.U_ell}  and 
 let $\ell >0$.  Then there is $m_*\geq 0$ such that 
  $W^i_m$, $i=1,\ldots, s$ and $\Psi_m$ satisfy \eqref{eqn:glob-U-num_method_regularity-gen} and, if $\ell \in J^-$, also   \eqref{eqn:glob-num_method_regularity-gen}
with uniform bounds in  
$m\geq m_*$.
%
%
Moreover,  if  $\ell \in J^-$,  then  \eqref{e.wwm} and
\eqref{e.psipsim} hold true for
$m\geq m_*$,  with $\kB_\ell^r$ replaced by $\widehat\kU_\ell$. 
 The bounds on   $h_*$, $m_*$,  $\Psi_m$ and $W_m$   and the order constants 
depend only on $\delta$, $R$ from \eqref{e.hatU},
   \eqref{e.lemShA}, the bounds afforded by assumption (B1)  and
 on $\a$, $\b$  as specified by the
  numerical method.
\end{lemma}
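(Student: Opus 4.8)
The plan is to mirror the proof of Lemma~\ref{l.reg-proj-num-method}, adapting it to the general-domain setting in exactly the way Lemma~\ref{l.see_proj_err-gen} adapts Lemma~\ref{l.see_proj_err}. The regularity statements \eqref{eqn:glob-U-num_method_regularity-gen} and \eqref{eqn:glob-num_method_regularity-gen} for $W^i_m$ and $\Psi_m$ follow from Theorem~\ref{thm:num_method_regularity-gen} applied to the projected map: one computes $W_m$ as a fixed point of $\P_m\Pi(\cdot,\P_m U,h)$, with $\Pi$ from \eqref{e.Pi-W}, and obtains $\Psi_m$ from \eqref{e.psi}. The only new ingredient, precisely as in the proof of Lemma~\ref{l.see_proj_err-gen}, is to pick $m_*\geq 0$ large enough that the contraction argument of Theorem~\ref{thm:num_method_regularity-gen} goes through for the projected map uniformly in $m\geq m_*$; this is possible because $\norm{\P_m}{\kY\to\kY}\leq 1$ and the bounds collected in \eqref{e.lemShA} are independent of $m$.

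For the projection errors \eqref{e.wwm} and \eqref{e.psipsim} with $\kB_\ell^r$ replaced by $\widehat\kU_\ell$, I would reproduce the chain of estimates leading to \eqref{e.WWm-est}. Writing $W=W(U,h)$ and $W_m=W_m(U,h)$ and using the representation \eqref{e.Pi}, the bound \eqref{e.Lambda} and the projection estimates \eqref{eqn:proj_est} give
\begin{align*}
\norm{W-W_m}{\kY^s} &\leq \Lambda\norm{\Q_m U}{\kY} + h\norm{\a}{}\Lambda\norm{\Q_m B(W)}{\kY^s}\\
&\quad + h\Lambda\norm{\a}{}M'\norm{W-W_m}{\kY^s}.
\end{align*}
Here $\norm{\Q_m U}{\kY}\leq m^{-\ell}\norm{U}{\kY_\ell}\leq m^{-\ell}R$ since $U\in\widehat\kU_\ell\subseteq\kB_\ell^R$, while the decisive term $\norm{\Q_m B(W)}{\kY^s}\leq m^{-\ell}M_\ell$ is controlled by assumption (B2). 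Solving for $\norm{W-W_m}{\kY^s}$ and absorbing the last term by taking $h_*$ small yields \eqref{e.wwm}, and \eqref{e.psipsim} for $\Psi-\Psi_m$ then follows from \eqref{e.psi} together with \eqref{e.ShA}, \eqref{e.lemShA} and \eqref{e.wwm}, exactly as in \eqref{e.psipsimHelp}.

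The main obstacle is not the algebra but verifying that assumption (B2) is actually applicable, i.e.\ that the stage vectors $W^i(U,h)$ lie in $\kD_{[\ell-1]^+}\cap\kB_\ell^R$, the domain on which $B$ is $\kY_\ell$-bounded. Unlike the ball setting of Lemma~\ref{l.reg-proj-num-method}, where everything is centred at $0$ and $\kY_\ell$-boundedness of $B$ is immediate from (B1), here one must first establish that the stages inherit the $\kY_\ell$-regularity of the data. I would obtain this by running the fixed-point argument of Theorem~\ref{thm:num_method_regularity-gen} on the scale $\kY_{\ell-j}$, $j=0,\ldots,\lfloor\ell\rfloor$, as in the proof of that theorem: for $U\in\widehat\kU_\ell$ and $h_*$ small the stages satisfy $W^i\in\kB_\ell^R$ (after possibly enlarging $R$), and since $W^i\to U$ in $\kY_{[\ell-1]^+}$ as $h\to 0$, they remain inside the neighbourhood $\kU_{[\ell-1]^+}^{\delta}\subseteq\kD_{[\ell-1]^+}$ afforded by \eqref{e.U_ell} (with $\ell$ replaced by $[\ell-1]^+\in I$). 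Once this domain membership is secured, (B2) supplies $\norm{B(W)}{\kY_\ell}\leq M_\ell$, and the remaining order constants depend only on the quantities listed in the statement.
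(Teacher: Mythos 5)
Your first paragraph (regularity via the contraction argument applied to $\P_m\Pi(W,\P_m U,h)$, with $m_*$ chosen large) matches the paper's proof, although the reason $m_*$ must be large is not the one you give: it is needed so that $\P_m U$ and the fixed-point ball around $\1\P_m U$ stay inside the domain $\kD$, which for general $\kU_\ell$ (unlike balls centred at $0$) is not automatic from $\norm{\P_m}{\kY\to\kY}\leq 1$. The genuine gap is in your proof of \eqref{e.wwm} and \eqref{e.psipsim}: you control the decisive term by $\norm{\Q_m B(W)}{\kY^s}\leq m^{-\ell}M_\ell$ \emph{using assumption (B2)}. But (B2) is not a hypothesis of this lemma --- it is introduced only for Theorem \ref{thm:ns_convergence-gen} --- and the lemma explicitly asserts that all bounds and order constants depend only on the bounds afforded by (B1). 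Without (B2), membership $W^i\in\kD_{[\ell-1]^+}\cap\kB_\ell^R$ only yields, via (B1), a bound on $B(W)$ in $\kY^s_{[\ell-1]^+}$, hence $\norm{\Q_m B(W)}{\kY^s}=\kO(m^{-[\ell-1]^+})$, which is strictly weaker than the required $\kO(m^{-\ell})$. The paper closes this hole with a different mechanism: since $\ell\in J^-$ there is $\epsilon\in(0,1]$ with $\ell-\epsilon\in I$, so the already-established regularity statement (applied at level $\ell-\epsilon$) gives $W^i(\cdot,h)\in\kCb(\widehat\kU_\ell;\kD_{\ell-\epsilon})$ and (B1) gives $\norm{B(W)}{\kY^s_{\ell-\epsilon}}\leq M_{\ell-\epsilon}$; then the smoothing estimate of Lemma \ref{l.Gamma}, which bounds $h\a(\id-h\a A)^{-1}$ from $\kY^s_{\ell-\epsilon}$ to $\kY^s_{\ell}$ by $\Gamma$, gains the missing derivative before $\Q_m$ is applied (the two operators commute), so that
\begin{equation*}
\norm{h\a(\id-h\a A)^{-1}\Q_m B(W)}{\kY^s}
\leq \norm{\Q_m}{\kY^s_\ell\to\kY^s}\,
\norm{h\a(\id-h\a A)^{-1}}{\kY^s_{\ell-\epsilon}\to\kY^s_\ell}\,
\norm{B(W)}{\kY^s_{\ell-\epsilon}}
\leq \Gamma M_{\ell-\epsilon}\,m^{-\ell},
\end{equation*}
and analogously for the $\Psi$-term after writing $h(\id-h\a A)^{-1}=\a^{-1}\,h\a(\id-h\a A)^{-1}$.

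A second, related flaw: your justification that the stages satisfy $W^i\in\kB_\ell^R$ ``by running the fixed-point argument on the scale $\kY_{\ell-j}$'' breaks down at level $j=0$ whenever $\ell\in J^-\setminus I$, in particular for $\ell\in(L,L+1]$, which is precisely the regime this appendix is built to handle: under (B1) there is no domain $\kD_\ell$ on which $B$ maps boundedly into $\kY_\ell$, so no contraction can be set up in $\kY_\ell$. A $\kY_\ell$-bound on the stages itself requires the smoothing estimate \eqref{e.Gamma}; indeed this is exactly how the paper later obtains $\norm{W^i(U,h)}{\kY_\ell}\leq \Lambda r_\psi+\Gamma M_{[\ell-1]^+}$ in the proof of Theorem \ref{thm:ns_convergence-gen}, where (B2) \emph{is} assumed and where your strategy (domain membership plus (B2)) is legitimately carried out to prove \eqref{e.BmW}. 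As written, your argument therefore proves a weaker statement than the lemma: it needs the extra hypothesis (B2), makes the constants depend on the (B2) bound, and leaves the $\kY_\ell$-regularity of the stages unestablished in the case $\ell>L$.
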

\begin{proof}
The proof is a modification of the proof of Lemma \ref{l.reg-proj-num-method}.
To prove \eqref{eqn:glob-U-num_method_regularity-gen} and   \eqref{eqn:glob-num_method_regularity-gen} 
for the projected numerical method  we need to choose $m_*\geq 0$ large enough
to be able to apply the contraction mapping theorem on $\P_m \Pi(W, \P_m U, h)$, with $\Pi$ as in \eqref{e.Pi-W},
 see \cite{OW10B}. 

To prove \eqref{e.wwm} in this setting, we need
estimate the term in the second  line of \eqref{e.WWm-est} differently than in \eqref{e.WWm-est} because from (B1) 
we can not guarantee
that $W^i(U,h) \in \kD_\ell$, $i=1,\ldots, s$; in particular this is wrong if $\ell>L$. Therefore  we cannot estimate $B(W)$ in the $\kY_\ell^s$ norm. We proceed 
 as follows: note that, since $\ell \in J^-$ there is      $\epsilon \in (0,1]$   such that $\ell-\epsilon \in I$. Then by \eqref{eqn:glob-U-num_method_regularity-gen}, with $\kD$ replaced by $\kD_{\ell-\epsilon}$, 
there is $h_*>0$ such that for $h\in [0,h_*]$,
 $W^i(\cdot, h) \in \kCb(\widehat\kU_{\ell}, \kD_{\ell-\epsilon})$, $i=1,\ldots, s$.
  Hence
  \begin{align*}
  \norm{h \a (\id-h\a A)^{-1} \Q_mB(W)}{\kY^s}  & \leq 
   \|\Q_m\|_{  \kY^s_{\ell}\to \kY^s}
\| h \a (\id - h \a A)^{-1} \|_{    \kY^s_{\ell-\epsilon}\to\kY^s_{\ell}    }  
 \| B(W)\|_{\kY^s_{\ell-\epsilon}}\\
 & \leq  \Gamma M_{\ell-\epsilon} m^{-\ell}
  \end{align*}
   with an order constant uniform in $U\in \widehat\kU_{\ell}$.
 Here we used \eqref{e.Gamma}  which will be proved in Lemma \ref{l.Gamma}
 below. Then solving \eqref{e.WWm-est}  for $\|W(U,h)-W_m(U,h)\|_{\kY^s}$ gives   \eqref{e.wwm}. 

To prove \eqref{e.psipsim} in this setting we estimate the term  $\| \Q_m \b h(\id - h \a A)^{-1} B(W)\|_{\kY^s}$ in the first line of \eqref{e.psipsimHelp} as follows:
\begin{align*}
 \| \Q_m \b h(\id - h \a A)^{-1} B(W)\|_{\kY^s} &
\leq s\|\b\| \|\Q_m  \a^{-1} h\a (\id - h \a A)^{-1} \|_{\kE(\kY_{\ell-\epsilon}^s,\kY^s)} \| B(W) \|_{\kY_{\ell-\epsilon}^s}
\\
& \leq s\|\b\| \|\Q_m\|_{\kY_{\ell}\to \kY} 
\| \a^{-1} h\a (\id - h \a A)^{-1} \|_{\kE(\kY_{\ell-\epsilon}^s,\kY^s_{\ell})} M_{\ell-\epsilon}\\
& \leq  s\|\b\| \| \a^{-1} \| \Gamma  M_{\ell-\epsilon} m^{-\ell}. 
 \end{align*}
 Inserting this into \eqref{e.psipsimHelp} proves \eqref{e.psipsim}, with  $\kB_\ell^r$ replaced by $\widehat\kU_\ell$.
\end{proof}
  
 The following lemma was needed in the proof:
  \begin{lemma}\label{l.Gamma}
 Under assumptions (A), (RK1) and (RK2) let $h_*>0$,
 and  $\Lambda>0$   be as in Lemma \ref{l.ShA}.  Then for $h\in
  [0,h_*]$
\begin{equation}
\| h \a (\id - h \a A)^{-1} \|_{    \kY^s\to \kY^s_{1}  }
\leq \Gamma := h_*\|\a\| \Lambda + (\Lambda +1) .
\label{e.Gamma}
\end{equation}
  \end{lemma}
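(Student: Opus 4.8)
The plan is to reduce the whole estimate to the resolvent-type identity
\[
h\a A(\id-h\a A)^{-1} = (\id-h\a A)^{-1}-\id,
\]
which holds because $A$ acts diagonally on $\kY^s$ and therefore commutes with the matrix $\a$, hence with $(\id-h\a A)^{-1}$. First I would record that, by this commutativity, for any $V\in\kY^s$ the vector $T(h)V:=h\a(\id-h\a A)^{-1}V$ satisfies $A\,T(h)V=(\id-h\a A)^{-1}V-V\in\kY^s$, so that $T(h)V$ genuinely lies in $\kY^s_1$ and the quantity to be bounded is well defined.

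Next I would split the $\kY_1$-norm according to its definition \eqref{eqn:inner_prod}. Working componentwise and then taking the maximum over the $s$ stages gives
\[
\norm{T(h)V}{\kY^s_1}\le\norm{\P\,T(h)V}{\kY^s}+\norm{|A|\Q\,T(h)V}{\kY^s},
\]
with $\P=\P_1$ and $\Q=\id-\P$, and I would estimate the two pieces separately. For the $\P$-part, since $\norm{\P}{\kY^s\to\kY^s}\le1$ and, by \eqref{e.Lambda}, $\norm{h\a(\id-h\a A)^{-1}}{\kY^s\to\kY^s}\le h\norm{\a}{}\Lambda\le h_*\norm{\a}{}\Lambda$, one gets $\norm{\P\,T(h)V}{\kY^s}\le h_*\norm{\a}{}\Lambda\,\norm{V}{\kY^s}$. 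For the $\Q$-part I would use normality of $A$, which yields $\norm{|A|\Q\,T(h)V}{\kY}=\norm{A\Q\,T(h)V}{\kY}$ in each component; then, commuting $\Q$ and $A$ through $\a$ and the resolvent and inserting the identity above,
\[
A\Q\,T(h)=\Q\,h\a A(\id-h\a A)^{-1}=\Q\bigl((\id-h\a A)^{-1}-\id\bigr),
\]
so that $\norm{|A|\Q\,T(h)V}{\kY^s}\le\norm{\Q}{}\bigl(\Lambda+1\bigr)\norm{V}{\kY^s}\le(\Lambda+1)\norm{V}{\kY^s}$ again by \eqref{e.Lambda}. Adding the two bounds gives $\norm{T(h)V}{\kY^s_1}\le\bigl(h_*\norm{\a}{}\Lambda+(\Lambda+1)\bigr)\norm{V}{\kY^s}$, which is precisely \eqref{e.Gamma}.

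The computation is short, so the ``hard part'' is really the commutativity bookkeeping rather than any analytic difficulty: one must verify that the diagonal action of $A$ truly commutes with the $\C^{s\times s}$ factors $\a$ and $(\id-h\a A)^{-1}$, and that the spectral projections $\P,\Q$ of the normal operator $A$ commute with the resolvent, so that $A\Q$ may be moved inside and replaced by the \emph{bounded} operator $(\id-h\a A)^{-1}-\id$. Normality of $A$ enters twice, once to pass from $|A|$ to $A$ inside the norm and once (implicitly) to guarantee that $\P$ and $\Q$ are orthogonal projections of norm at most $1$.
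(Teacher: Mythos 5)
Your proof is correct and follows essentially the same route as the paper's: the paper likewise bounds $\| h \a (\id - h \a A)^{-1} \|_{\kY^s\to\kY^s_1}$ by the sum $\| h \a (\id - h \a A)^{-1} \|_{\kE(\kY^s)} + \| h \a A (\id - h \a A)^{-1} \|_{\kE(\kY^s)}$, estimating the first term by $h\|\a\|\Lambda$ via \eqref{e.Lambda} and the second by $\Lambda+1$ via the identity $h\a A(\id-h\a A)^{-1}=(\id-h\a A)^{-1}-\id$. Your version merely makes explicit the $\P$/$\Q$ splitting of the $\kY_1$-norm and the commutativity bookkeeping that the paper leaves implicit.
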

  \begin{proof}
 Using Lemma \ref{l.ShA} we estimate
\begin{align*}
\| h \a (\id - h \a A)^{-1} \|_{  \kE( \kY^s  , \kY^s_{ 1}) }  
&\leq \| h \a (\id - h \a A)^{-1} \|_{  \kE( \kY^s ) }
+ \| h \a A (\id - h \a A)^{-1} \|_{  \kE( \kY^s) } \notag\\
&\leq h\|\a\| \Lambda + (\Lambda +1) . 
\end{align*}
  \end{proof}
  
  \begin{lemma}[$m$-dependent bounds for derivatives of $\Phi_m$ on general domains]\label{l.phimDerivs-gen}
Assume (A) and (B1) and 
choose $\ell>0$ with 
$\ell\in J^-$, $T> 0$, $R>0$ and $m_*\geq 0$.
 Then \eqref{eqn:globproj_flow_reg} holds true  for all $k\in\N_0$ with $k\leq \ell$
  and  all $U^0$ with 
\begin{equation}\label{e.glob-cond-proj_flow_reg-gen}
  \|\Phi^t_m(U^0)\|_{\kY_\ell}\leq R,\quad
  \Phi^t_m(U^0) \in \kD_{[\ell-1]^+}\quad\mbox{for}\quad t\in [0,T],~~m\geq m_*.
  \end{equation}
Further,  for $k\in \N_0$ with
    $\ell\leq k \leq N$ and for all $U^0$ satisfying
  \eqref{e.glob-cond-proj_flow_reg-gen} the estimate
\eqref{eqn:proj_flow_m_dep}
is still true
  with bounds uniform in $U^0$.   The bounds and order
constants only depend on $T$,  \eqref{eqn:semigroup_bound},   $R$ from \eqref{e.glob-cond-proj_flow_reg-gen}
and the bounds from assumption (B1).
\end{lemma}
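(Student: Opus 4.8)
The plan is to mirror the proof of Lemma~\ref{l.phimDerivs}, replacing the ball condition $\Phi_m^t(U^0)\in\kB_\ell^R$ by the general-domain hypothesis \eqref{e.glob-cond-proj_flow_reg-gen} and invoking assumption (B1) in place of (B) at each regularity level. Throughout I write $u_m(t)=\Phi_m^t(U^0)$, which by hypothesis satisfies $u_m(t)\in\kD_{[\ell-1]^+}$ and $\|u_m(t)\|_{\kY_\ell}\leq R$ for $t\in[0,T]$, $m\geq m_*$, and which lies in $\P_m\kY$. Two facts let the argument run using only (B1): first, the domains $\{\kD_\ell\}$ are nested, so $u_m\in\kD_{[\ell-1]^+}\subseteq\kD_{\ell'}$ for every level $\ell'\leq[\ell-1]^+$; second, since $\ell\in J^-$, the levels $\ell-j$ with $1\leq j\leq\lfloor\ell\rfloor$ all lie in $I$, so (B1) may legitimately be applied there. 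Existence and $t$-continuity of $\Phi_m$ with the required uniformity come from Lemma~\ref{l.see_proj_err-gen} and Theorem~\ref{t.semiflow-gen}.

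First I would establish the $m$-independent regularity analogous to \eqref{eqn:proj_flow_k_m_indep}, namely
\[
\Phi_m(U^0)\in\kCb^k([0,T];\kY_{\ell-k}),\qquad k\leq\ell,
\]
with bounds uniform in $m\geq m_*$ and in $U^0$; this is vacuous unless $\ell\geq1$. For $\ell\geq1$ one reads off from \eqref{eqn:see_proj} that $\partial_t u_m=Au_m+\P_m B(u_m)$, where $Au_m\in\kY_{\ell-1}$, while (B1) applied at level $\ell-1\in I$ (legitimate since $u_m\in\kD_{\ell-1}=\kD_{[\ell-1]^+}$) gives $B(u_m)\in\kY_{\ell-1}$, hence $\partial_t u_m\in\kY_{\ell-1}$. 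Differentiating $k-1$ further times via the Fa\`a di Bruno formula \eqref{e.FaaDiBruno}, every factor $\partial_t^\alpha u_m$ with $\alpha\leq k-1$ lies in $\kY_{\ell-\alpha}\subseteq\kY_{\ell-k+1}$, and the derivatives $\D^\beta B_m(u_m)$ are bounded $\beta$-linear maps on $\kY_{\ell-k+1}$ by (B1) at level $\ell-k+1\in I$ (using $u_m\in\kD_{\ell-1}\subseteq\kD_{\ell-k+1}$ for $k\geq2$); thus the nonlinear part lands in $\kY_{\ell-k+1}\subseteq\kY_{\ell-k}$, and together with $A\partial_t^{k-1}u_m\in\kY_{\ell-k}$ this yields $\partial_t^k u_m\in\kY_{\ell-k}$ with $m$-independent bounds. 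Taking the $\kY$-norm gives \eqref{eqn:globproj_flow_reg} for $k\leq\ell$.

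Next I would prove \eqref{eqn:proj_flow_m_dep} by induction on $k=\lceil\ell\rceil,\dots,N$, exactly as in Lemma~\ref{l.phimDerivs}. The base case splits: for $\ell<1$ one bounds $\|\partial_t u_m\|_\kY\leq\|Au_m\|_\kY+M$ with $\|Au_m\|_\kY\leq m^{1-\ell}\|u_m\|_{\kY_\ell}$ (from \eqref{eqn:proj_est} and \eqref{e.A_ell}) and $\|\P_m B(u_m)\|_\kY\leq M=M_0$ by (B1) at level $0$ (valid since $u_m\in\kD_0$), giving $\kO(m^{1-\ell})$; for integer $\ell\geq1$ the start $k=\ell$ is covered by the previous paragraph; for non-integer $\ell\geq1$ the start $k=\lceil\ell\rceil$ uses \eqref{e.FaaDiBruno} with $i=\lfloor\ell\rfloor$, whose nonlinear part is $m$-independently bounded and whose linear part obeys $\|A\partial_t^{\lfloor\ell\rfloor}u_m\|_\kY=\|A^{1+\lfloor\ell\rfloor-\ell}(A^{\ell-\lfloor\ell\rfloor}\partial_t^{\lfloor\ell\rfloor}u_m)\|_\kY=\kO(m^{1+\lfloor\ell\rfloor-\ell})$. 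In the inductive step one applies \eqref{e.FaaDiBruno} with $i=k$: the term $A\partial_t^k u_m$ is $\kO(m^{k+1-\ell})$ by \eqref{eqn:proj_est} and the induction hypothesis, and each nonlinear summand $\D^\beta B_m(u_m)\prod_\alpha(\partial_t^\alpha u_m)^{j_\alpha}$ is estimated \emph{in the $\kY$-norm}, using boundedness of $\D^\beta B$ as a map $\kY^\beta\to\kY$ from (B1) at level $0$ together with the $m$-independent $\kY$-bounds for $\partial_t^\alpha u_m$ ($\alpha\leq\ell$) and the induction hypothesis for the higher derivatives; this gives $\kO(m^n)$ with $n\leq k-\ell$, so the right-hand side is $\kO(m^{k+1-\ell})$, closing the induction.

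The step I expect to need the most care is the bookkeeping that keeps every application of (B1) legal. The point I would emphasise is that in the $m$-dependent part the nonlinearity and all its derivatives are evaluated only in the $\kY=\kY_0$ norm, so level $0\in I$ always suffices and no non-integer intermediate level $\ell-\lfloor\ell\rfloor$ (which need not lie in $I$) ever appears; whereas in the $m$-independent part the only levels invoked are $\ell-j$ with $1\leq j\leq\lfloor\ell\rfloor$, which lie in $I$ precisely because $\ell\in J^-$, and the nesting $\kD_{[\ell-1]^+}\subseteq\kD_{\ell-j}$ guarantees $u_m$ stays in the relevant domain. Consequently (B2) is not needed here: assumption (B1) alone suffices, and the uniform dependence of the constants on $T$, $R$, \eqref{eqn:semigroup_bound} and the bounds afforded by (B1) follows exactly as in Lemma~\ref{l.phimDerivs}.
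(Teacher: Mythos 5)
Your proposal is correct and follows essentially the same route as the paper: the paper's own proof is the one-line remark that one repeats the proof of Lemma~\ref{l.phimDerivs} with $\kB^R_{\ell-k}$ replaced by $\kD_{\ell-k}$ in \eqref{eqn:proj_flow_k_m_indep}, which is exactly what you do. Your careful bookkeeping — applying (B1) only at levels $\ell-j\in I$ (legal since $\ell\in J^-$) with the evaluation point kept in the domains via $u_m\in\kD_{[\ell-1]^+}$ and nestedness, and using only the $\kY_0$-level of (B1) in the $m$-dependent induction — is precisely the detail the paper leaves implicit, and it is sound.
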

\begin{proof}
The proof is similar to the  proof of Lemma \ref{l.phimDerivs}, but with 
$\kB^R_{\ell-k}$ replaced
 $\kD_{\ell-k}$ in \eqref{eqn:proj_flow_k_m_indep}.
 \end{proof}

  \begin{lemma}[$m$-dependent bounds for derivatives of $\Psi_m$ and $W_m$ on general domains]
  Assume   (\ref{enum:A}), (B1),   (RK1) and (RK2). Choose 
 $\ell\in J^-$, $\ell>0$  and
$k\in \N_0$,
  $\ell <k\leq N$.
Then there are
  $h_*>0$ and $m_*\in\N$ such that  for $m\geq m_*$ 
and    $i=1,\ldots, s$, 
  \eqref{e.wmpsim} holds, with $\kB^R_0$ replaced by $\kD$, 
  with $m$-dependent bounds which are uniform in 
$U \in \widehat\kU_\ell  $.
Moreover  \eqref{e.deriv-wm-psim-bound}
holds true with $\kB_\ell^r$ replaced by  $\widehat\kU_\ell $.
The  order constants in  \eqref{e.deriv-wm-psim-bound} depend only on $\delta$ from \eqref{e.U_ell}, $R$ from \eqref{e.hatU}, \eqref{e.lemShA}, $\a$ and $\b$ from the numerical method 
and the bounds afforded by (B1). 
 \label{lem:psim-m-dep-deriv-gen}
\end{lemma}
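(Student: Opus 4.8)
The plan is to follow the proof of Lemma~\ref{lem:psim-m-dep-deriv} line by line, replacing the balls $\kB_0^R$ and $\kB_\ell^r$ by the domains $\kD$ and $\widehat\kU_\ell$ and invoking the general-domain regularity results in place of their on-ball analogues. First I would record, from Lemma~\ref{l.reg-proj-num-method-gen} applied on the shifted scale (with $\kD$ replaced by $\kD_{\ell-j}$), the $m$-independent regularity
\[
\Psi_m,\ W_m^i \in \kCb^{j}([0,h_*];\kD_{\ell-j}), \qquad j=1,\ldots,\lfloor\ell\rfloor,
\]
uniformly in $m\geq m_*$ and $U\in\widehat\kU_\ell$; this plays the role of \eqref{eqn:proj_psi_m_indep}. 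The differentiation formulas \eqref{e.dkhPsi_m} and \eqref{e.derivwmInduction} and the Fa\`a di Bruno expansion \eqref{e.Bwm} are purely algebraic and carry over unchanged, as do the operator estimates \eqref{e.Sm-est}, \eqref{e.hOpEst}, \eqref{e.firstterm}, which do not involve $B$.

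I would then run the same induction over $k=\lceil\ell\rceil,\ldots,N$, proving \eqref{e.derivBmbound} and the second estimate of \eqref{e.deriv-wm-psim-bound} simultaneously. Most of the ball-based estimates transfer verbatim: estimate \eqref{e.derivBmbound} and the bound \eqref{e.Bwmderivest} coming from \eqref{e.Bwm} are carried out entirely in the $\kY$-norm, where $\D^\beta B(W_m)$ enters as a bounded multilinear map $\kY^\beta\to\kY$ (assumption (B1) at the level $0\in I$, on $\kD$) applied to lower-order $h$-derivatives of $W_m$; the terms \eqref{e.jLessEll} with $1\leq j\leq\lfloor\ell\rfloor$ only require $\|\partial_h^j B(W_m)\|_{\kY_{\ell-j}}$ with $\ell-j\in I$, which is available from (B1) together with $W_m\in\kD_{\ell-j}$; and the terms \eqref{e.j>Ell} with $j\geq\ell$ use only the $\kY$-valued bound \eqref{e.derivBmbound}.

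The single place needing a genuinely new argument is the term $j=0$ in \eqref{e.jLessEll} and in the sum of \eqref{e.derivwmInduction}, which calls for a bound on $\|B(W_m)\|_{\kY_\ell}$. When $\ell\leq L$ (so $\ell\in I$) this is immediate from (B1), since $W_m\in\kD_\ell$. The difficulty is exactly the obstruction met in Lemma~\ref{l.reg-proj-num-method-gen}: when $\ell>L$ we have $\ell\notin I$, so (B1) provides no bound of $B$ into $\kY_\ell$ and in general $W_m\notin\kD_\ell$. I would resolve it in the same manner. In this case $\ell\in J^-$ forces $\ell\in(L,L+1]$, so with $\epsilon:=\ell-L\in(0,1]$ one has $\ell-\epsilon=L\in I$. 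Since $\widehat\kU_\ell\subseteq\widehat\kU_L$, the general-domain regularity at level $L$ gives $W_m\in\kD_{L}$ uniformly in $m\geq m_*$ for $U\in\widehat\kU_\ell$, whence $\|B(W_m)\|_{\kY_L^s}\leq M_L$ by (B1). I would then bound the $j=0$ term by
\[
\bigl\|\partial_h^{k}\bigl(h(\id-h\a A)^{-1}\bigr)\P_m\bigr\|_{\kY_{L}^s\to\kY^s}\,\|B(W_m)\|_{\kY_L^s}=\kO\!\left(m^{k-1-L}\right)=\kO\!\left(m^{k-\ell}\right),
\]
the operator bound following from \eqref{e.Lambdak} and \eqref{eqn:proj_est} (writing $A^{k-1}\P_m=A^{k-1-L}A^{L}\P_m$ and using $\|A^L\|_{\kY_L\to\kY}\leq1$ from \eqref{e.A_ell}), and the last identity holding because $\ell\leq L+1$. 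The corresponding term in \eqref{e.derivwmInduction} is treated identically with $h\a(\id-h\a A)^{-1}$ in place of $h(\id-h\a A)^{-1}$.

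With this single modification in place, all terms in \eqref{e.dkhPsi_m} and \eqref{e.derivwmInduction} are $\kO(m^{k-\ell})$ except the $j=k$ contribution $\D B(W_m)\,\partial_h^k W_m$ to $\partial_h^kW_m$, which is absorbed for $h_*$ small exactly as in \eqref{e.derivwmest}--\eqref{e.Bwmderivest}, closing the induction and giving \eqref{e.deriv-wm-psim-bound} with $\kB_\ell^r$ replaced by $\widehat\kU_\ell$. The regularity \eqref{e.wmpsim} (with $\kB_0^R$ replaced by $\kD$) then follows from these $m$-dependent derivative bounds together with the lower-order regularity recorded at the outset. I expect the main obstacle to be precisely the control of $B$ at the top regularity level $\ell\notin I$ via the $\epsilon=\ell-L$ device; the remainder is a faithful transcription of the ball-based argument.
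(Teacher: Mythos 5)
Your overall architecture --- transcribing the ball-based induction and isolating the $j=0$ term (the bound on $B(W_m)$ at the top regularity level) as the only step needing a new idea --- is exactly the paper's, and your treatment of the terms with $1\le j\le\lfloor\ell\rfloor$ and with $j\ge\lceil\ell\rceil$ is sound. However, your resolution of the $j=0$ term has a genuine gap in both of your cases. For $\ell\le L$ you assert that $W_m\in\kD_\ell$ is ``immediate''. It is not available: the hypothesis only gives $U\in\widehat\kU_\ell=\kU_{[\ell-1]^+}\cap\kB_\ell^{R}$, so $U$ itself need not lie in $\kU_\ell$ or $\kD_\ell$, and on general domains no regularity result produces stages in $\kD_\ell$ from data whose smoothness is exactly $\ell$: the contraction must be run in a ball centred at $\1 U^0$, staying in that ball costs the term $\|((\id-h\a A)^{-1}-\id)\1 U^0\|$, and by Lemma \ref{l.Lambda-epsilon} this is small only when measured at least one level below the smoothness of $U^0$. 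Consequently Lemma \ref{l.reg-proj-num-method-gen}, shifted to base space $\kY_{\ell'}$, requires data lying in $\kU_{\ell'}$ (at least) and delivers stages only in $\kD_{\ell'}$; with $U\in\widehat\kU_\ell$ the largest admissible base level is $\ell'=[\ell-1]^+$, so all you may use is $W_m\in\kD_{[\ell-1]^+}$.

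Your case $\ell>L$ fails for the same reason in a different guise: from $\widehat\kU_\ell\subseteq\widehat\kU_L$ and ``regularity at level $L$'' you conclude $W_m\in\kD_L$, but Lemma \ref{l.reg-proj-num-method-gen} with smoothness parameter $L$ maps $(\widehat\kU_L)^r_0$ into $\kD=\kD_0$, not into $\kD_L$; landing in $\kD_L$ needs the lemma shifted to base $\kY_L$, whose data class lies inside $\kU_L$, and $U\in\kU_{[\ell-1]^+}$ does not imply $U\in\kU_L$ because $[\ell-1]^+<L$ whenever $L>0$ and $\ell<L+1$. The paper repairs both cases at once by working uniformly at level $[\ell-1]^+$ (which lies in $I$ since $\ell\in J^-$ and $0\in I$): the shifted Lemma \ref{l.reg-proj-num-method-gen} gives $W_m(U,h)\in\kD_{[\ell-1]^+}$ for $U\in\widehat\kU_\ell$, assumption (B1) bounds $\|B(W_m)\|_{\kY^s_{[\ell-1]^+}}$, and \eqref{e.hOpEst} yields
\begin{equation*}
\bigl\|\partial_h^{k}\bigl(h(\id-h\a A)^{-1}\bigr)\P_m\bigr\|_{\kY^s_{[\ell-1]^+}\to\kY^s}\,\|B(W_m)\|_{\kY^s_{[\ell-1]^+}}
=\kO\bigl(m^{k-1-[\ell-1]^+}\bigr)\le\kO\bigl(m^{k-\ell}\bigr),
\end{equation*}
since $[\ell-1]^+\ge\ell-1$. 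So the ``drop a level and spend one $h$-derivative of $h(\id-h\a A)^{-1}$'' device you propose for $\ell>L$ is the right idea, but it must be anchored at $[\ell-1]^+$, the only level the hypotheses support; your anchors ($\ell$ and $L$, respectively) cannot be justified. The same correction applies to the $j=0$ term of \eqref{e.derivwmInduction}.
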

\begin{proof}
The following modifications have to be made to the proof of Lemma \ref{lem:psim-m-dep-deriv}:
replace $\kB^R_{\ell-j}$  by $\kD_{\ell-j}$ and $\kB_\ell^r$  by $\widehat\kU_\ell $.
Furthermore, when $j=0$, $k>\ell>0$ in \eqref{e.jLessEll} then 
we do not know if $W_m^i \in \kD_\ell$,  in particular this is wrong if $\ell>L$. Therefore
we cannot use (B1) to bound $\|B(W_m(U,h))\|_{\kY^s_\ell}$. So  we proceed by
 Lemma \ref{l.reg-proj-num-method-gen}, with $\kY$ replaced by $\kY_{[\ell-1]^+}$, to obtain
  that $W_m(U,h) \in \kD_{[\ell-1]^+}$
for $U \in \widehat\kU_\ell  $, $h\in [0,h_*]$ which with  \eqref{e.hOpEst}  implies 
\begin{align*}
 \|  \partial^{k }_h  & ( h(\id-h\a A)^{-1})  \P_m B(W_m(U,h))\|_{\kY^s} \leq  \\
& \quad\quad\quad \|  \partial^{k }_h ( h(\id-h\a A)^{-1}) \P_m \|_{ \kY^s_{[\ell-1]^+}\to \kY^s } 
\| B(W_m(U,h))\|_{\kY^s_{ [\ell-1]^+}} \\
& \quad\quad\quad
= O(m^{k-1 - [\ell-1]^+   }  )  
 \leq O(m^{k-\ell}).
\end{align*}

\end{proof}
 
 \begin{theorem}[Trajectory error for nonsmooth data on general domains]
 \label{thm:ns_convergence-gen}
  Assume (A), (B1), (B2), 
  (RK1) and (RK2),  and let   
 $\ell \in J^-$ with $0< \ell \leq
  p+1 $.  Fix $T>0$. Then there exist constants 
  $h_*>0$, $c_1>0$, $c_2>0$ such that for every   $U^0 $
 with
  \begin{equation}\label{e.condPhiGlobal-gen}
    \{\Phi^t(U^0):t\in[0,T]\}\subset\widehat\kU_{\ell}
  \end{equation}
  and for all
  $h\in[0,h_*]$  
 estimate  \eqref{e.globTrajEst} holds true for $nh\leq T$.
 The constants $h_*$,
  $c_1$ and $c_2$ depend only on   $\delta$ from \eqref{e.U_ell}, $R$ from \eqref{e.hatU},  
  $T$,  \eqref{eqn:semigroup_bound},  \eqref{e.lemShA}, $\a$,
$\b$ from the numerical method
  and the bounds afforded by (B1) and (B2).
\end{theorem}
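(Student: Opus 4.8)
The plan is to follow verbatim the three-step architecture of the proof of Theorem~\ref{thm:ns_convergence}, replacing each ingredient by its general-domain counterpart established earlier in this appendix, and to invoke the additional assumption (B2) at the single point where the ball structure was previously used implicitly. As before I couple the Galerkin parameter to the step size by $m(h)=h^{-p/(p+1)}$, so that balancing the projection error $\kO(m^{-\ell})$ against the growth $\kO(m^{p+1-\ell})$ of the local error coefficient of the projected scheme produces the exponent $p\ell/(p+1)$. The hypothesis \eqref{e.condPhiGlobal-gen}, namely $\{\Phi^t(U^0):t\in[0,T]\}\subset\widehat\kU_\ell$, plays the role that the $\kY_\ell$-bound \eqref{e.condPhiGlobal} played before.

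First I would carry out Step~1, the regularity of the projected semiflow $u_m(t)=\Phi_m^t(U^0)$. Using Lemma~\ref{l.see_proj_err-gen} together with \eqref{e.condPhiGlobal-gen}, the Gronwall estimate that led to \eqref{e.umEll} yields a bound $\|\Phi_m^t(U^0)\|_{\kY_\ell}\le r_\phi$ uniform in $m\ge m_*$ and in admissible $U^0$. In addition, applying the projection-error bound of Lemma~\ref{l.see_proj_err-gen} in the \emph{lower} norm $\kY_{[\ell-1]^+}$ (legitimate since $[\ell-1]^+<\ell$), together with the neighbourhood relations \eqref{e.U_ell} and \eqref{e.hatU}, forces $\Phi_m^t(U^0)\in\kD_{[\ell-1]^+}$ for $m$ large. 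These two facts are exactly the hypotheses \eqref{e.glob-cond-proj_flow_reg-gen} of Lemma~\ref{l.phimDerivs-gen}, which then delivers $\|\partial_t^{p+1}\Phi_m^t(U^0)\|_\kY=\kO(m^{p+1-\ell})$. For Step~2 I would reproduce the recursion \eqref{e.mainComp} for the projected global error $E_m^n$, drawing the bound $\|\partial_h^{p+1}\Psi_m^h\|_\kY=\kO(m^{p+1-\ell})$ from Lemma~\ref{lem:psim-m-dep-deriv-gen} and the one-step Lipschitz bound \eqref{e.DPsimEst} from Theorem~\ref{thm:num_method_regularity-gen}. The only novelty is that the mean-value step needs the convex-combination iterates to remain inside the $\kD$-neighbourhood on which $\D\Psi_m^h$ is controlled; I secure this by first shrinking $h_*$ so that $E_m^n\le r_\phi$, exactly as in \eqref{e.enInY}. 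This gives the analogue of \eqref{e.glob-ge_m}, namely $\|(\Psi_m^h)^n(U^0)-\Phi_m^{nh}(U^0)\|_\kY=\kO(h^{\,p\ell/(p+1)})$, and en route the $\kY_\ell$-bound \eqref{e.psim-Ell} on the numerical trajectory, which certifies that the iterates stay in $\widehat\kU_\ell$.

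The genuine new input lies in Step~3, the projection error $e^n=(\Psi^h)^n-(\Psi^h_m)^n$ of the numerical trajectory. I would run the telescoping recursion \eqref{e.en}--\eqref{e.enRecursion}, using Lemma~\ref{l.reg-proj-num-method-gen} (with $\kB_\ell^r$ replaced by $\widehat\kU_\ell$) for the one-step truncation estimates \eqref{e.wwm} and \eqref{e.psipsim}. Assumption (B2) enters precisely at the analogue of \eqref{e.BmW}: there the term $\Q_m B(W)$ must be estimated directly in $\kY$ by $\|\Q_m B(W)\|_{\kY^s}\le m^{-\ell}\|B(W)\|_{\kY_\ell^s}$ via \eqref{eqn:proj_est}, so a $\kY_\ell$-bound on $B$ evaluated at the full stages $W$ is needed. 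On a general domain this is no longer automatic from (B1), since the stages need not lie in $\kD_\ell$ (indeed no such set exists when $\ell>L$); instead (B2), i.e.\ boundedness of $B\colon\kD_{[\ell-1]^+}\cap\kB_\ell^R\to\kY_\ell$, furnishes it, once one knows $W\in\widehat\kU_\ell$ from the regularity of the numerical trajectory. Summation yields the analogue of \eqref{e.glob-PsiPsihm}, $\|(\Psi^h)^n(U^0)-(\Psi^h_{m(h)})^n(U^0)\|_\kY=\kO(m^{-\ell})$, and adding the three contributions as in \eqref{e.EnEst} gives \eqref{e.globTrajEst}.

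The main obstacle I anticipate is not any new analytic inequality but the bookkeeping of domain memberships: in the ball case every intermediate object inherited a $\kY_\ell$-ball bound automatically, whereas here I must repeatedly certify that the continuous solution, the projected solution, the full and projected numerical iterates, and the Runge--Kutta stages all remain inside the relevant sets $\kD$, $\kD_{[\ell-1]^+}$, $\kU_{[\ell-1]^+}$ and their $\delta$-neighbourhoods, so that (B1), (B2) and the Lipschitz bounds stay applicable. This forces me to enlarge $m_*$ (equivalently, through $m=h^{-p/(p+1)}$, shrink $h_*$) at several points, ensuring the various projection errors do not push trajectories out of these neighbourhoods. The delicate part is managing the interplay between the two smoothness levels: the size bound is taken in $\kY_\ell$ while the domain membership is verified in $\kY_{[\ell-1]^+}$, and it is exactly this gap, appearing when $\ell>L$, that makes assumption (B2) indispensable.
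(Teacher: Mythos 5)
Your proposal is correct and follows essentially the same route as the paper's own proof: the identical three-step architecture with $m(h)=h^{-p/(p+1)}$, the projected semiflow's membership in $\kD_{[\ell-1]^+}$ secured by applying Lemma~\ref{l.see_proj_err-gen} in the lower norm $\kY_{[\ell-1]^+}$, and assumption (B2) invoked at exactly the same point the paper uses it, namely to get the $\kY_\ell^s$-bound on $B(W)$ needed for the analogue of \eqref{e.BmW} once the stages are known to lie in $\kD_{[\ell-1]^+}$ with a $\kY_\ell$-bound. The bookkeeping you flag at the end (domain membership in $\kY_{[\ell-1]^+}$ versus size bounds in $\kY_\ell$, handled by enlarging $m_*$ and shrinking $h_*$) is precisely what the paper carries out, e.g.\ in \eqref{e.phimHatDEll}, \eqref{e.psihmDEll-1} and \eqref{e.pshmHatDell}.
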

\begin{proof}
The main difference to the
 proof of Theorem \ref{thm:ns_convergence} is that we have to ensure that the Galerkin truncation and
 time discretization of the solution $\Phi^t(U^0) $ stay in the domain $\kD_{[\ell-1]^+}$.

In the first step, where we  prove regularity of $\Phi^t_m(U^0) = u_m(t)$, we make the following changes:
 we first apply Lemma \ref{l.see_proj_err-gen}
with $\kY$ replaced
by $ \kY_{[\ell-1]^+}$, with $\ell$ replaced by $\epsilon:=\ell - [\ell-1]^+\in (0,1]$, with $\kD$ replaced by
$\kU_{[\ell-1]^+}^{\delta/4}$  and with $\delta$ from \eqref{e.U_ell} replaced by $\delta/4$. 
This, together with  \eqref{e.hatC_ell} shows that, for sufficiently large $m_*$, we have  for all $U^0$ satisfying
\eqref{e.condPhiGlobal-gen}  
\begin{equation}\label{e.phimHatDEll}
\Phi_m^t(U^0) \in  \kU_{[\ell-1]^+}^{\delta/4} \cap \kB_\ell^{r_\phi}, ~~t \in [0,T], m\geq m_*.
\end{equation}

 In the second step of the proof we make the following changes:
  in this case,  due to Lemma \ref{l.reg-proj-num-method-gen}, we have $
\Psi^h_m \in \kCb^1((\widehat\kU_\ell)^r_0;\kD)
$
for $r=r(\delta)$ as in \eqref{e.R*}.
 Using Lemma \ref{l.see_proj_err-gen} we get  for $m\geq m_*$ with  $m_*$ sufficiently large 
 that 
\begin{equation}\label{e.PhimDEllr/2}
\Phi^t_m(U^0) \in (\widehat\kU_{\ell})^{r/2}_0  ~~\mbox{for}~~ t\in [0,T].
\end{equation}
This ensures that $\Psi_m^h$ is well defined on the trajectory $u_m(t) = \Phi^t_m(U^0) $ of the
Galerkin truncated system. Moreover $\Psi_m^h$ is well-defined on the numerical trajectory
$(\Psi^h_m)^n(U^0)$, $nh\leq T$, as long as 
\begin{equation}
\label{e.enInY-gen}
\|(\Psi^h_m)^n(U^0) - \Phi^{nh}_m(U^0) \|_{\kY} \leq r/2,
\end{equation}
which will be proved later.
Furthermore estimate \eqref{e.DPsimEst} on $\D \Psi^h_m$  holds with $\kB^{2r_\phi}_0$ replaced by $(\widehat\kU_\ell)^r_0$. Also, \eqref{e.mainComp} holds with the same replacement in \eqref{e.dpsiEst} and in \eqref{e.CondmeanvalueThm}. 
In this case  the first term of \eqref{e.bothSups} is $O(m^{p+1-\ell})$  
by  Lemma \ref{l.phimDerivs-gen} (with $R$ replaced by $r_\phi$) which applies due to \eqref{e.phimHatDEll}. The second term  of \eqref{e.bothSups} is $O(m^{p+1-\ell})$    by   Lemma~\ref{lem:psim-m-dep-deriv-gen}, with
$\widehat\kU_\ell$ replaced by $ \kU_{[\ell-1]^+}^{\delta/4}\cap \kB_\ell^{r_\phi} $   
(and consequently $\delta$  in   \eqref{e.U_ell} replaced by $3\delta/4$), see \eqref{e.hatU}  
and \eqref{e.phimHatDEll}.
Using  \eqref{e.glob-ge_m} we can achieve \eqref{e.enInY-gen}  for $h_*>0$ small enough.
This then ensures that conditon 
\eqref{e.CondmeanvalueThm}, with $\kB^{2r_\phi}_0$ replaced by $(\widehat\kU_\ell)^r_0$, is satisfied.

 In the third step of the proof we make the following changes:
 We first establish the required regularity of the numerical trajectory of the projected system: we prove that
\begin{equation}
\label{e.psihmDEll-1}
(\Psi^h_m)^n(U^0) \in \kU_{ [\ell-1]^+}^{\delta/2}
~~\mbox{for}~~ m=m(h), nh \leq T, h \in [0,h_*]
\end{equation}
and all $U^0$ satisfying
\eqref{e.condPhiGlobal-gen}. 
From  \eqref{e.glob-ge_m}  and \eqref{eqn:proj_est} we see that, with 
$\epsilon = \ell - [\ell-1]^+$, for $h \in [0,h_*]$, $nh\leq T$,
\begin{align}
\| (\Phi^{nh}_m(U^0)-(\Psi_{m(h)}^h)^n(U^0)\|_{\kY_{ \ell-\epsilon }}
& \leq  \|\Phi^{nh}_m(U^0) -(\Psi^h_m)^n(U^0)  \|_{\kY} m^{\ell-\epsilon}\notag \\
& \leq  C\e^{\sigma_\Psi T }h^{ \ell p/(p+1) - (\ell-\epsilon)p/(p+1) }
 =
C\e^{\sigma_\Psi T}h^{ \epsilon p/(p+1)}.\notag
\end{align}
Using that $\Phi^t_m(U^0) \in\kU_{[\ell-1]^+}^{\delta/4}$
for $t\in [0,T]$, see \eqref{e.phimHatDEll}, we can achieve \eqref{e.psihmDEll-1}, 
 by possibly decreasing $h_*>0$.
 The estimate \eqref{e.psim-Ell}, together with  \eqref{e.psihmDEll-1}, shows that
\begin{equation}
\label{e.pshmHatDell}
(\Psi^h_{m(h)})^n(U) \in \widehat\kU_{\ell}^{\delta/2}:=\kU_{[\ell-1]^+}^{\delta/2}\cap \kB_\ell^{r_\psi},\quad 0 \leq nh \leq T, h \in [0,h_*].
\end{equation}
%
%
By Lemma \ref{l.reg-proj-num-method-gen}, with $\widehat \kU_\ell$ replaced by $ \widehat\kU_{\ell}^{\delta/2}$, and consequently $\delta$ replaced by $\delta/2$ and $r=r(\delta)$  replaced by
$r(\delta)/2$ (see \eqref{e.R*})  we have 
\begin{equation}
\label{e.dPsimhatDEll-gen}
\Psi_m \in \kCb^1((\widehat\kU_{\ell}^{\delta/2})^{r/2}_0; \kD).
\end{equation}
Then \eqref{e.enRecursion} is still valid, with  the condition that $\|U\|_{\kY_\ell} \leq  2r_\psi$ replaced by
the condition $U \in (\widehat\kU_{\ell}^{\delta/2})^{r/2}_0$ and with an order constant which is uniform in 
all $U^0$ satisfying \eqref{e.condPhiGlobal-gen} provided that \eqref{e.psimPsiTheta} holds with 
$\kB_0^{ 2r_\psi}$ replaced by
$(\widehat\kU_{\ell}^{\delta/2})^{r/2}_0$.

Moreover \eqref{e.e1Est} holds for 
$U \in \widehat\kU_{\ell}^{\delta/2} \cap \P_m\kY$
instead of  $U \in \kB_{\ell}^{2r_\psi} \cap \P_m\kY$,
 by Lemma \ref{l.reg-proj-num-method-gen},   with $\widehat\kU_{\ell}$ replaced by $\widehat\kU_{\ell}^{\delta/2}$.
To prove \eqref{e.BmW} in this setting note  that by \eqref{eqn:glob-U-num_method_regularity-gen}, 
with
$\kD$ replaced by $\kD_{[\ell-1]^+ }$, 
with $\widehat\kU_{\ell}$ replaced by $\widehat\kU_{\ell}^{\delta/2}$ and with  $\ell$ by
$ \epsilon := \ell - [\ell-1]^+$,
we have 
\begin{equation}
W^i(U,h) \in \kD_{ [\ell-1]^+ }, ~~\mbox{for}~~ U \in \widehat\kU_{\ell}^{\delta/2},
h \in [0,h_*],  i=1,\ldots, s.
\label{e.WD-1}
\end{equation}
Moreover, by \eqref{e.Pi}, \eqref{e.WD-1}, Lemma \ref{l.ShA} and \eqref{e.Gamma} we have for $i=1,\ldots, s$, $h\in [0,h_*]$ and  $ U \in \widehat\kU_{\ell}^{\delta/2}$  
that
\[
\|W^i(U,h)\|_{\kY_\ell} \leq R_W := \Lambda r_\psi  + \Gamma M_{[\ell-1]^+}.
\]
This shows that
$W^i(U,h) \in \kD_{[\ell-1]^+} \cap \kB^{R_W}_\ell$,  $i=1,\ldots, s$.
Hence   by (B2),  we have 
\[
\sup_{\substack{h \in [0,h_*]\\ U \in \widehat\kU_{\ell}^{\delta/2}}} \|  B(W(U,h))\|_{\kY_\ell^s}  <\infty
\]
and that, with \eqref{eqn:proj_est}, proves \eqref{e.BmW}.

The local projection error $e^1(U)$ along the numerical trajectory $(\Psi^h_m)^n(U^0)$ is estimated
as in the proof of Theorem \ref{thm:ns_convergence}.
 By possibly choosing a bigger $m_*$   we can achieve that
$\|e^n(U^0)\|_{\kY} \leq r/2$, $nh\leq T$, $h\in [0,h_*]$.
Then, due to \eqref{e.pshmHatDell},  the
required condition \eqref{e.psimPsiTheta}, with $\kB_0^{ 2r_\psi}$ replaced by
$(\widehat\kU_{\ell}^{\delta/2})^{r/2}_0$,  is
satisfied. This proves \eqref{e.glob-PsiPsihm} and concludes the proof.
\end{proof}



\section*{Acknowledgments}
The authors want to thank the Nuffield foundation for the Summer
Bursary Scheme under which this project was started in 2009.


\end{document}